\numberwithin{equation}{section} 
\newcounter{mnote}
\theoremstyle{plain}
\newtheorem{theorem}{Theorem}[section]
\newtheorem{proposition}[theorem]{Proposition}
\newtheorem{lemma}[theorem]{Lemma}
\newtheorem{corollary}[theorem]{Corollary}
\newtheorem{claim}{Claim}[section]
\theoremstyle{definition}
\newtheorem{definition}[theorem]{Definition}
\theoremstyle{remark}
\newcommand{\vect}[1]{\mathbf{#1}}
\newcommand{\bk}{\vect{k}}
\newcommand{\bu}{\vect{u}}
\newcommand{\bv}{\vect{v}}
\newcommand{\bx}{\vect{x}}
\newcommand{\by}{\vect{y}}
\newcommand{\bm}{\vect{m}}
\newcommand{\field}[1]{\mathbb{#1}}
\newcommand{\nT}{\field{T}}
\newcommand{\nZ}{\field{Z}}
\newcommand{\nR}{\field{R}}
\newcommand{\nL}{\field{L}}
\newcommand{\vphi}{\varphi}
\newcommand{\veps}{\varepsilon}
\newcommand{\maps}{\rightarrow}
\newcommand{\pd}[2]{\frac{\partial #1}{\partial #2}}
\newcommand{\od}[2]{\frac{d #1}{d #2}}
\newcommand{\npd}[3]{\frac{\partial^#3 #1}{\partial #2^#3}}
\newcommand{\abs}[1]{\left\lvert#1\right\rvert}
\newcommand{\norm}[1]{\left\lVert#1\right\rVert}
\newcommand{\set}[1]{\left\{#1\right\}}
\newcommand{\grad}{\text{$\nabla$}}
\newcommand{\lap}{\Delta}
\newcommand{\strong}{\rightarrow}
\newcommand{\weakstar}{\stackrel[\ast]{}{\rightharpoonup}}
\newcommand{\convh}{\stackrel[h]{}{\ast}}
\newcommand{\LpP}[1]{\text{$L^{#1}(\nT^3)$}}
\newcommand{\nLpP}[1]{\text{$\nL^{#1}(\nT^3)$}}
\newcommand{\LpR}[2]{\text{$L^{#1}({\nR}^{#2})$}}
\newcommand{\WP}[2]{\text{$W^{#1,#2}(\nT^3)$}}
\newcommand{\CinftyR}[1]{\text{$C^{\infty}(\nR^{#1})$}}
\newcommand{\Lph}[1]{\text{$L^{#1}_h(\Omega)$}}
\newcommand{\LphP}[1]{\text{$L^{#1}_h(\nT^3)$}}
\newcommand{\nLphP}[1]{\text{${\nL}^{#1}_h(\nT^3)$}}
\newcommand{\LzP}[1]{\text{$L^{#1}_z(\nT^3)$}}
\newcommand{\WhP}[2]{\text{$W^{#1,#2}_h(\nT^2)$}}
\newcommand{\LzLphP}[2]{\text{$L^{#1,#2}_{z,h}(\nT^3)$}}
\newcommand{\nLzLphP}[2]{\text{${\nL}^{#1,#2}_{z,h}(\nT^3)$}}
\newcommand{\LzWhP}[3]{\text{$L^{#1}_z(W^{#2,#3}_h(\nT^3))$}}
\begin{document}
\title[3D Inviscid pseudo-Hasegawa-Mima Model]{Global Well-posedness of an inviscid three-dimensional pseudo-Hasegawa-Mima model}

\date{December 7, 2011.}

\author{Chongsheng Cao}
\address[Chongsheng Cao]{Department of Mathematics\\
                Florida International University\\
                Miami, FL 33199, USA}
\email[Chongsheng Cao]{caoc@fiu.edu}
\author{Aseel Farhat}
\address[Aseel Farhat]{Department of Mathematics\\
                University of California\\
        Irvine, CA 92697-3875, USA}
\email[Aseel Farhat]{afarhat@math.uci.edu}
\author{Edriss S. Titi}
\address[Edriss S. Titi]{Department of Mathematics, and Department of Mechanical and Aero-space Engineering\\
University of California\\
Irvine, CA 92697-3875, USA.
Also, The Department of Computer Science and Applied Mathematics\\
The Weizmann Institute of Science, Rehovot 76100, Israel.
Fellow of the Center of Smart Interfaces (CSI), Technische Universit\"{a}t Darmstadt, Germany. }
\email[Edriss S. Titi]{etiti@math.uci.edu and edriss.titi@weizmann.ac.il}

\begin{abstract}
The three-dimensional inviscid Hasegawa-Mima model is one of the fundamental models that describe plasma turbulence. The model also appears as a simplified reduced Rayleigh-B\'enard convection model. The mathematical analysis the Hasegawa-Mima equation is challenging  due to the absence of any smoothing viscous terms, as well as to the presence of an analogue of the vortex stretching terms. In this paper, we introduce and study a model which is inspired by the inviscid Hasegawa-Mima model, which we call a pseudo-Hasegawa-Mima model. The introduced model is easier to investigate analytically than the original inviscid Hasegawa-Mima model, as it has a nicer mathematical structure. The resemblance between this model and the Euler equations of inviscid incompressible fluids inspired us to adapt the techniques and ideas introduced for the two-dimensional and the three-dimensional Euler equations to prove the global existence and uniqueness of solutions for our model. Moreover, we prove the continuous dependence on initial data of solutions for the pseudo-Hasegawa-Mima model. These are the first results on existence and uniqueness of solutions for a model that is related to the three-dimensional inviscid Hasegawa-Mima equations.

\end{abstract}

 \maketitle
 {\bf MSC Subject Classifications:} 35Q35, 76B03, 86A10.

{\bf Keywords:} Three-dimensional Hasegawa--Mima Model, Rayleigh--B\'enard convection, Euler equations, global regularity.
 

\section{Introduction}

The three-dimensional inviscid Hasegawa-Mima equations were first derived by Hasegawa and Mima as a simple model that describes plasma turbulence. The Hasegawa-Mima equations are also one of the simplest and most fundamental models that describe the electrostatic drift waves. The three-dimensional equations that describe the coupling of the drift modes to ion-acoustic waves that propagate along the magnetic field are given by the system (c.f., \cite{Akerstedt_Nycander_Pavlenko_1996}, \cite{Hasegawa_Mima_1978}, \cite{Horton_Meiss_1983}, \cite{Guo_Zhang_2007}):
\begin{subequations}\label{3D_Hasegawa_Mima}
\begin{align}
\frac{\partial}{\partial t} (\triangle_h\phi- \phi) +J(\phi,\triangle_h\phi) + v_d\frac{\partial \phi}{\partial y} - \frac{\partial v_z}{\partial z} & =0 ,\\
\frac{\partial v_z}{\partial t} + J(\phi,v_z) + \frac{\partial \phi }{\partial z} &= 0,
\end{align}
\end{subequations}
where $J$ is the two-dimensional Jacobian defined by $J(f,g) = \frac{\partial f}{\partial x}\frac{\partial g}{\partial y} - \frac{\partial f}{\partial y}\frac{\partial g}{\partial x}$ and $\triangle_h = \frac{\partial^2}{\partial x^2}+\frac{\partial^2}{\partial y^2}$ is the horizontal Laplacian. Here $\phi$ is the electrostatic potential and is simultaneously the stream function for the horizontal flow in the $xy$-plane, so that the horizontal velocity is given by ${\bf v}_h = \bf{\hat{z}} \times \grad_h\phi$. Here $v_d$ is a constant that is proportional to the density gradient and $v_z$ is the normalized (by the ion-acoustic wave speed) velocity in the $z$-direction. Notice that the last term in the first equation is the ``ghost" of  the vortex stretching.

In the context of geophysical fluid dynamics, the three-dimensional inviscid Hasegawa-Mima equations appear as a reduced Rayleigh-B$\acute{e}$nard convection model which describes the flow motion of a fluid heated from below. The three-dimensional rotationally constrained convection model was derived in \cite{Julien_Knobloch_Milliff_Werne_2006}, and \cite{Sprague_Julien_Knobloch_Weme_2006}, under the assumption of very small Rossby number. In the three-dimensional domain $\Omega = [0,L]^3$, the equations are:
\begin{subequations}\label{Rayleigh_Benard}
\begin{align}
\frac{\partial w}{\partial t} + ({\bold{u}} \cdot \nabla_h) w - \frac{\partial ((-\triangle_h)^{-1}\omega)}{\partial z} & = \Gamma \Theta^{'} +\frac{1}{Re}\triangle_h w, \\
\frac{\partial \omega}{\partial t} +({\bold{u}}\cdot \nabla_h) \omega -\frac{\partial w}{\partial z} & = \frac{1}{Re}\triangle_h\omega,\\
\frac{\partial {\Theta^{'}}}{\partial t} + ({\bold{u}} \cdot \nabla_h) \Theta^{'} +w \frac{\partial {\bar \Theta}}{\partial z} &= \frac{1}{Pe}\triangle_h \Theta^{'}, \\
-\frac{1}{Pe} \frac{\partial^2 {\bar\Theta}}{\partial z^2} + \frac{\partial (\overline{\Theta^{'} w})}{\partial z} &=0,\\
\int _{[0,L]^2} \Theta^{'} \ dxdy &=0,\\
\nabla_h \cdot {\bold{u}} &= 0,
\end{align}
\end{subequations}
where the horizontal-average of a given function is defined as:
\begin{equation*}
{\bar \phi} = \frac{1}{L^2}\int_{[0,L]^2} \phi \ dxdy.
\end{equation*}
Here, ${\bold{u}} = (u,v)$ is the horizontal component of the velocity vector field $(u,v,w)$, and the vertical component of the vorticity, $\omega = \nabla_h\times{\bold{u}} := \frac{\partial v}{\partial x}-\frac{\partial u}{\partial y}$, and the fluctuation of the temperature, $\Theta^{'}= \Theta - { \bar \Theta}$, are unknown functions of $(t;x,y,z)$, while the horizontal-mean temperature ${\bar \Theta}$ is an unknown function of $(t;z)$. $Re$ is the Reynolds number, $\Gamma$ is the buoyancy number, $Pe$ is the P$\acute{e}$clet number, $\nabla_h =(\frac{\partial}{\partial x}, \frac{\partial}{\partial y})$ and $\triangle_h = \frac{\partial^2}{\partial x^2}+\frac{\partial^2}{\partial y^2}$.

The two-dimensional Hasegawa-Mima equations were derived by Charney and Obukhov as a shallow water model from the Euler equations with free surface under quasi-geostrophic balance assumption. The geophysical model that was obtained is:
\begin{equation}\label{2D_Hasigawa_Mima}
\pd{(\triangle_h\phi_0 - F\phi_0)}{t}+ J(\phi_0,\triangle_h\phi_0) + J(\phi_0,\phi_B+\beta y) = 0,
\end{equation}
where $\phi_0(x,y)$ is the amplitude of the surface perturbation at the lowest order in the Rossby number, $\phi_B(x)$ is a given function that parametrizes the bottom topography, and here again $J$ is the Jacobian while $F$ is the Froude number constant. When a flat bottom is considered; that is when $\phi_B$ is taken to be equal to constant one obtains
\begin{equation}\label{HMCO}
\pd{(\triangle_h\phi_0-F\phi_0)}{t}+ J(\phi_0,\triangle_h\phi_0) + \beta\pd{\phi_0}{x} = 0,
\end{equation}
which is the inviscid Hasegawa-Mima-Charney-Obukhov (HMCO) equation, where $\phi_0 = \phi_0(t;x,y)$. Global existence and uniqueness of solutions of the inviscid HMCO equation can be easily established following the same ideas as for the two-dimensional Euler equations. Indeed, it was remarked in \cite{Gao_Zhu_2005}, and in \cite{Paumond_2004}, that \eqref{HMCO}, with $F=\beta=0$, is equivalent to the two-dimensional Euler equations of incompressible inviscid fluid, where $\phi_0$ plays the role of the stream function, $\omega = \triangle_h\phi_0$ the vorticity and $u = (-\pd{\phi_0}{y},\pd{\phi_0}{x})$ is the velocity field. The existence and uniqueness of strong local solutions of $\eqref{HMCO}$ in $H^s(\nR^2)$, with initial data $\phi_0 \in H^s(\nR^2)$, for $s\geq 4$; and the existence of a weak global solution in $H^2(\nR^2)$, with initial data $\phi_0 \in H^2(\nR^2)$, were established in \cite{Paumond_2004}.  The uniqueness of a global strong solution in $H^s(\nR^2)$, for $s\geq4$, was later established in \cite{Gao_Zhu_2005}. In a recent paper \cite{Farhat_Hauk_Titi_2011}, we followed the work of Yudovich \cite{Yudovich_1963} to prove the existence and uniqueness for the vorticity $\triangle_h \phi_0$ in the space $L^\infty$, in the case of no-normal flow boundary conditions, i.e. Dirichlet boundary conditions for the stream function $\phi_0$.  Notice that $\phi_0\in H^s$, for $s\geq 4$, is equivalent to $\triangle_h \phi_0\in H^{s-2}$. Thus, our result, reported in \cite{Farhat_Hauk_Titi_2011}, improves the previous results, since $\triangle_h \phi_0 \in H^{s-2} \subset L^\infty$ for $s>3$.

The two-dimensional viscous version of the Hasegawa-Mima equations was also studied, later on, by some authors using standard tools used for studying the two-dimensional Navier-Stokes equations. For example, the existence of a global attractor in $H^2(\Omega)$ of the viscous two-dimensional equations, was studied in \cite{R_Zhang_2008} and in \cite{Guo_Zhang_2006}; and an upper bounds for its Hausdorff and fractal dimensions were also established. A perturbed version of the two-dimensional Hasegawa-Mima equations was studied in \cite{Grauer_1998}, the authors considered the two-dimensional Hasegawa-Mima equations with a hyper-viscosity term and used techniques introduced for the Kuramoto-Sivashinsky equation (KS) by Nicolaenko, Scheurer and Temam \cite{Nicolaenko-Scheurer-Temam_1985}, for odd-periodic solutions. The authors in \cite{Guo_Zhang_2007} studied the existence of a global attractor of a viscous three-dimensional model similar to the three-dimensional Hasegawa-Mima equations that they called  a ``generalized" version of the three-dimensional Hasegawa-Mima equations and they established upper bounds for its dimension. We emphasize that here the authors followed well-established tools for dissipative  equations (see, e.g., \cite{Temam_1997} and reference therein) to prove the existence of the global attractor and estimating its dimension. The added artificial three-dimensional viscosity simplifies the mathematical analysis drastically and allows for the implementation of these well-established tools. We stress here that all the previous studies have been focused on the viscous and the inviscid two-dimensional Hasegawa-Mima equations, and on the viscous three-dimensional case following well-established tools.

Investigating the global existence of \eqref{Rayleigh_Benard} is challenging due to the fact that the unknown functions depend on the three spatial variables $(x,y,z)$, while the regularizing viscosity terms acts only in the horizontal variables $(x,y)$. The situation for \eqref{3D_Hasegawa_Mima} is even more challenging since it has no viscous regularization at all. Inspired by these models, i.e. \eqref{3D_Hasegawa_Mima} and \eqref{Rayleigh_Benard}, we introduce the following ``simplified" mathematical inviscid model -- the pseudo-Hasegawa-Mima equations:
\begin{subequations}\label{simplified_3D_Hasegawa_Mima}
\begin{align}
\frac{\partial w}{\partial t} + ({\bold{u}} \cdot \nabla_h) w- U_0L\frac{\partial \omega}{\partial z}& = 0, \label{simplified_3d_HM_a}\\
\frac{\partial \omega}{\partial t} +({\bold{u}}\cdot \nabla_h) \omega -\frac {U_0}{L}\frac{\partial w}{\partial z}& = 0,\label{simplified_3d_HM_b}\\
\nabla_h \cdot \bold{u} &= 0,
\end{align}
\end{subequations}
for some $U_0$ velocity scale in the $z$-direction. Here ${\bold{u}}=(u,v)$ is the horizontal component of the velocity vector field $(u,v,w)$, and the vorticity $\omega = \nabla_h\times{\bold{u}} := \frac{\partial v}{\partial x}-\frac{\partial u}{\partial y}$. One can see easily that \eqref{simplified_3D_Hasegawa_Mima} is a simplified version of the Hasigawa-Mima equations \eqref{3D_Hasegawa_Mima}. The goal of introducing, and investigating, this mathematical model is to shed light on the analysis of \eqref{3D_Hasegawa_Mima} and \eqref{Rayleigh_Benard}. The system \eqref{simplified_3D_Hasegawa_Mima} is simpler because it has a nicer mathematical structure. Specifically, let us denote:
\begin{align}
\theta = w + L\omega, \quad \quad \eta = w - L\omega.
\end{align}
Adding and subtracting \eqref{simplified_3d_HM_a} and \eqref{simplified_3d_HM_b} yield the coupled system:
\begin{align}\label{simplified_3d_HM_theta_eta}
\pd{\theta}{t} + (\bu\cdot \nabla_h)\theta - U_0\pd{\theta}{z} = 0, \quad \pd{\eta}{t} +(\bu\cdot\nabla_h)\eta + U_0\pd{\eta}{z} = 0, \quad \nabla_h\cdot \bu &=0,
\end{align}
where $\nabla_h\times\bu=\omega=\frac{1}{2L}(\theta-\eta)$. System \eqref{simplified_3d_HM_theta_eta} is similar in its structure to the two-dimensional Euler equations due to the relation between $\bu$ and the vorticity $\omega$, where the variable $z$ is thought of as a parameter, and also in the sense that it is a purely three-dimensional transport system that does not have a vorticity stretching term. Notice that the velocity fields that transport $\theta$ and $\eta$ in \eqref{simplified_3d_HM_theta_eta} are not the same. This is in fact a reflection of the vortex stretching terms in the original model \eqref{simplified_3D_Hasegawa_Mima}.

We denote by $\nT^d$ the L-period box $[0,L]^d$. We will establish in this work the global existence and uniqueness for system \eqref{simplified_3d_HM_theta_eta} in $\nT^3$, i.e. subject to periodic boundary conditions, and then use this result to conclude the global existence and uniqueness of solutions for system \eqref{simplified_3D_Hasegawa_Mima} in $\nT^3$. 
As in the case of the two-dimensional Euler equations in vorticity formulation (see, e.g., \cite{Majda_Bertozzi_2002} and \cite{Marchioro_Pulvirenti_1994}), we have the analogue  of the two-dimensional periodic Biot-Savart law:
\begin{subequations}\label{conv}
\begin{align}
\bu(x,y,z) = K\convh\omega &:=\int_{\nT^2} K(x-s,y-\xi)\,\omega(s,\xi,z)\,dsd\xi,\\
K(x,y) &= \nabla_h^\perp G(x,y),
\end{align}
\end{subequations}
where $G(x,y)$ is the fundamental solution of the Poisson equation in two-dimensions subject to periodic boundary conditions, the binary operation $\convh$ denotes the horizontal convolution, and $\nabla_h^\perp = (-\pd{}{y}, \pd{}{x})$. In \eqref{conv} we explicitly restrict ourselves to the unique solution $\bu$ of the elliptic system: $\nabla_h\times\bu=\omega$ and  $\nabla_h\cdot \bu =0$, that satisfies the side condition $\int_{\nT^2} \bu(x,y,z) \,dx\,dy = 0$ for every $z \in [0,L]$.

In section $2$, we state and prove a technical two-dimensional logarithmic inequality that improves the Brezis-Gallouet \cite{Brezis_Gallouet_1980} and the Brezis-Wainger \cite{Brezis_Wainger_1980} logarithmic Sobolev inequalities. This logarithmic inequality allows us to prove some useful estimates that we use in proving the global existence and uniqueness results. In section $3$, we will study the global existence and uniqueness of smooth solutions of a ``linearized" version of  \eqref{simplified_3d_HM_theta_eta} and establish some relevant uniform estimates. In section $4$, we use the established results and the uniform estimates obtained for the ``linearized" system, in section 3, to prove a global existence and uniqueness result of solutions, and obtain some uniform estimates for \eqref{simplified_3d_HM_theta_eta}. Even though system \eqref{simplified_3d_HM_theta_eta} does not seem to have explicit vorticity stretching term, but the terms $\pd{\theta}{z}$ and $\pd{\eta}{z}$  can be thought of as if they are vorticity stretching terms and produce similar challenge. The main challenge in this work is in obtaining the relevant uniform bounds that we proved based on estimates we obtain in section $2$, using the logarithmic Sobolev-type inequality. We stress that our results are one of the first results related to the inviscid three-dimensional Hasegawa-Mima equations.

\bigskip
\section{Preliminaries}
In this section, we introduce some preliminary material and notations which are commonly used in the mathematical study of fluids, in particular in the study of the Navier-Stokes equations (NSE) and the Euler equations.

Let $\mathcal{F}$ be the set of all trigonometric polynomials with periodic domain $\nT^3:=[0,L]^3$.
We denote by $\LpP{p}$, $\WP{s}{p}$, ${H}^s(\nT^3)\equiv \WP{s}{2}$ to be the closures of $\mathcal{F}$ in the usual Lebesgue and Sobolev spaces, respectively.  Since we restrict ourselves to finding solutions over the three-dimensional L-periodic box $\nT^3$, therefore, we can work in the spaces defined above consistently. We define the inner product on ${L}^2(\nT^3)$:
\[\left<\bu,\bv\right>= \int_{\nT^3} \bu(\bx)\cdot \bv(\bx)\,d\bx,
\]
and the associated norm $\norm{\bu}_{\LpP{2}}=\left<\bu,\bu\right>^{1/2}$.  (We use this notation for both scalar and vector fields). We define the following Banach spaces:
$$L_z^{p}(\nT^3):= \set{f: \norm{f(x,y)}_{L_z^p(\nT^3)}^p:=\int_{0}^{L}\abs{f(x,y,z)}^p \,dz < \infty,\,\text{for every}\, (x,y) \in [0,L]^2},
$$
$$L_h^{p}(\nT^3):= \set{f: \norm{f(z)}_{L_h^{p}(\nT^3)}^p:=\int_{0}^{L}\int_{0}^{L}\abs{f(x,y,z)}^p \,dxdy <\infty,\,\text{for every}\, z\in [0,L]},
$$
$$L_h^{\infty}(\nT^3):= \set{f: \norm{f(z)}_{L_h^{\infty}(\nT^3)}:= \stackrel[{(x,y)\in[0,L]^2}]{}{\text{esssup}}\abs{f(x,y,x)} <\infty,\,\text{for every}\, z\in [0,L]},
$$
$$\LzLphP{\infty}{p}:= \set{f: \norm{f}_{L_z^\infty(L_h^p(\nT^3))}:= \stackrel[{z\in[0,L]}]{}{\text{esssup}}\norm{f(z)}_{L_h^{p}(\nT^3)} <\infty}.
$$
Similarly, we define the spaces $\LzWhP{\infty}{s}{p}$ and $L_z^{\infty}(H^s_h(\nT^3)$. We also denote by the Banach space:
$${\nL}^p(\nT^3):= {L}^p(\nT^3)\times{L}^p(\nT^3)$$
and its associated norm:
$$\norm{(\phi,\psi)}_{{\nL}^p(\nT^3)} = \norm{\phi}_{{L}^p(\nT^3) }+ \norm{\psi}_{{L}^p(\nT^3)}.$$
Similarly, we define the spaces $\nL_z^{p}(\nT^3), \nL_h^{p}(\nT^3), \nL_h^{\infty}(\nT^3)$ and $\nLzLphP{\infty}{p}$. The natural definitions of $\norm{.}_{\WP{m}{p}}$ and $\norm{.}_{\WhP{m}{p}}$ are
\begin{align}
\norm{\bu}_{\WP{m}{p}}^2 := \sum_{\abs{\alpha}\leq m} L^{\frac{2(\abs{\alpha}-m)}{3}}\int_{\nT^3}\abs{D^{\alpha}\bu(x,y,z)}^2\,dx\,dy\,dz,
\end{align}
and
\begin{align}
\norm{\bu(z)}_{\WhP{m}{p}}^2 := \sum_{\abs{\alpha}\leq m} L^{\abs{\alpha}-m}\int_{\nT^2}\abs{D^{\alpha}\bu(x,y,z )}^2\,dx\,dy.
\end{align}

Let $Y$ be a Banach space.  We denote by $L^p([0,T];Y)$ the space of (Bochner) measurable functions $t\mapsto w(t)$, where $w(t)\in Y$, for a.e. $t\in[0,T]$, such that the integral $\int_0^T\|w(t)\|_Y^p\,dt$ is finite. A similar convention is used in the notation $C^k([0,T];Y)$ for $k$-times differentiable functions of time on the interval $[0,T]$ with values in $Y$. We denote by ${C}^\infty_{per}([0,T];\nT^3)$ the set of infinitely differentiable functions in the variable $\bx$ and $t$ which are periodic in $\bx$, and we denote by $C^k_{per}(\nT^3)$ the set of k-times differentiable periodic functions in the variable $\bx$ with periodic domain $\nT^3$.
Next, we recall the Ladyzhenskaya inequality for an integrable function $\vphi$ in two dimensions,
\begin{equation}\label{L4_to_H1}
\|\vphi(z)\|_{\LphP{4}}\leq C \|\vphi(z)\|_{\LphP{2}}^{1/2}\|\vphi(z)\|_{H^1_h}^{1/2},
\end{equation}
for every $z\in[0,L]$. Hereafter, $C$ denotes a generic constant which may change from line to line.
Furthermore, for all $\bu \in H^1_h(\nT^2)$, with $\int_{\nT^2}\bu(x,y,z) \,dx\,dy = 0$, for every $z\in[0,L]$, we have the Poincar\'e inequality
\begin{align}\label{Poincare_inequality}
\norm{\bu(z)}_{L^2_h(\nT^2)} \leq CL \norm{\bu(z)}_{H^1_h(\nT^2)}.
\end{align}
We recall the well-known elliptic estimate in two-dimensions, due to the Biot-Savart law for an incompressible vector field $\bu$, satisfying $\nabla_h\cdot \bu=0$ and $\nabla_h\times\bu = \omega$,
\begin{equation}\label{elliptic_regularity_Yudovich}
\norm{\bu(z)}_{W^{1,p}_h} \leq C p \|\omega(z)\|_{L^p_h},
\end{equation}
for every $z\in[0,L]$ and every $p\in [2,\infty)$ (see, e.g., \cite{Yudovich_1963} and references therein).
Also, it is an easy exercise to show using the divergence free condition, $\nabla_h\cdot \bu=0$, that
\begin{align}
\label{nabla_u_omega_L2}
\norm{\omega(z)}_{L^p_h} = \norm{\nabla_h\times\bu(z)}_{L^p_h} = \sqrt{2} \norm{\nabla_h\bu(z)}_{L^p_h}.
\end{align}

Next, we introduce a generalization of the logarithmic inequality that was introduced in \cite {Cao_Wu_2011}. Similar inequalities were previously introduced by other authors in \cite{Danchin_Paicu_2011}, \cite{Kupferman_Mangoubi_Titi_2008} and \cite{Zhang_2008}. These inequalities improve the Brezis-Gallouet \cite{Brezis_Gallouet_1980} and the Brezis-Wainger \cite{Brezis_Wainger_1980} logarithmic Sobolev inequalities. We follow the same idea of the proof given by the authors in \cite{Cao_Wu_2011} and \cite{Hou_Li_2005}. We give the proof in $\nR^2$, but one can derive the same result for periodic functions (see Corollary \ref{periodic_logarithmic_inequality} below).

\begin{lemma}\label{logarithmic_inequality_3D_HM}
Let $F: \nR^2 \maps \nR$ be a scalar valued function, $F \in W^{1,2+\delta}(\nR^2)$, and $\delta>0$ be given. Then, for any $\lambda \in (0,\infty)$ and any radius $R \in(0,\infty)$, the following logarithmic inequality holds:
\begin{align}\label{logarithmic_inequality_3D_HM_stated}
\norm{F}_{\LpR{\infty}{2}} &\leq C_{\delta,\lambda}\max\set{1,\sup_{q\geq2} \frac{\norm{F}_{\LpR{q}{2}}}{q^{\lambda} R^{2/q}}} \notag \\
&\qquad \log^{\lambda}\left(e^2 + C R^{\delta/(2+\delta)}\frac{\norm{F}_{\LpR{2+\delta}{2}}}{R}+ \norm{\nabla F}_{\LpR{2+\delta}{2}}\right).
\end{align}
where $C_{\delta,\lambda}$ is an absolute positive constant that depends on $\delta$ and $\lambda$.
\end{lemma}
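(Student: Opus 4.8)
The plan is to estimate $|F(x_0)|$ at an arbitrary point $x_0\in\nR^2$ by splitting $F$ into a low-frequency (mollified) part and a high-frequency remainder, and then to optimize two free parameters: the Lebesgue exponent $q$ used on the low-frequency part and the mollification scale $\epsilon$. Since $2+\delta>2$, Morrey's embedding guarantees $F\in C^{0,\alpha}(\nR^2)$ with $\alpha:=\delta/(2+\delta)$, so $\norm{F}_{\LpR{\infty}{2}}$ is a genuine pointwise supremum and the decomposition below is meaningful. First I would fix a standard nonnegative mollifier $\rho$ supported in the unit ball with $\int\rho=1$, set $\rho_\epsilon(x)=\epsilon^{-2}\rho(x/\epsilon)$, and write $F=F*\rho_\epsilon+(F-F*\rho_\epsilon)$.

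For the low-frequency part, Young's inequality gives, for every $q\ge 2$ with conjugate exponent $q'$,
\[
\norm{F*\rho_\epsilon}_{\LpR{\infty}{2}}\le \norm{\rho_\epsilon}_{\LpR{q'}{2}}\,\norm{F}_{\LpR{q}{2}}\le C\,\epsilon^{-2/q}\norm{F}_{\LpR{q}{2}},
\]
where a direct scaling computation shows $\norm{\rho_\epsilon}_{\LpR{q'}{2}}\le C\epsilon^{-2/q}$ uniformly in $q\ge2$. Writing $A:=\max\{1,\sup_{q\ge2}\norm{F}_{\LpR{q}{2}}/(q^{\lambda}R^{2/q})\}$ for the factor appearing in the statement and inserting the bound $\norm{F}_{\LpR{q}{2}}\le A\,q^{\lambda}R^{2/q}$ yields
\[
\norm{F*\rho_\epsilon}_{\LpR{\infty}{2}}\le C\,A\,q^{\lambda}\,(R/\epsilon)^{2/q}.
\]

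The decisive step is to minimize $q\mapsto q^{\lambda}(R/\epsilon)^{2/q}$ over $q\ge 2$. With $M:=\log(R/\epsilon)\ge0$ (valid once $\epsilon\le R$), the choice $q\sim\max\{2,(2/\lambda)M\}$ is optimal and produces $q^{\lambda}(R/\epsilon)^{2/q}\le C_{\lambda}(1+M)^{\lambda}$; this is exactly the mechanism that converts the power-law growth of $\norm{F}_{\LpR{q}{2}}$ into the $\lambda$-th power of a logarithm, and it is where the normalization by $q^{\lambda}$ and $R^{2/q}$ in the definition of $A$ is used. Hence $\norm{F*\rho_\epsilon}_{\LpR{\infty}{2}}\le C_{\lambda}A\,(1+\log(R/\epsilon))^{\lambda}$ for $0<\epsilon\le R$. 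For the remainder I would use Morrey's pointwise estimate $|F(x)-F(x-y)|\le C|y|^{\alpha}\norm{\nabla F}_{\LpR{2+\delta}{2}}$, integrated against $\rho_\epsilon$, which gives $\norm{F-F*\rho_\epsilon}_{\LpR{\infty}{2}}\le C\epsilon^{\alpha}\norm{\nabla F}_{\LpR{2+\delta}{2}}$.

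Finally I would choose $\epsilon$ to balance the two contributions: taking $\epsilon$ so that $\epsilon^{\alpha}\norm{\nabla F}_{\LpR{2+\delta}{2}}\approx A$ (and $\epsilon=R$ in the trivial regime where this would force $\epsilon>R$, which only strengthens the bound) makes the remainder comparable to $A$, while $\log(R/\epsilon)$ becomes comparable, up to normalization by a power of $R$, to the logarithm of the quantity on the right-hand side of \eqref{logarithmic_inequality_3D_HM_stated}; here one uses the elementary consequence $R^{\delta/(2+\delta)-1}\norm{F}_{\LpR{2+\delta}{2}}\le (2+\delta)^{\lambda}A$ (take $q=2+\delta$ in the definition of $A$) both to keep everything controlled by $A$ and to justify inserting the additive $e^2$ so that the logarithm is bounded below. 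Collecting the two bounds and absorbing all numerical factors into $C_{\delta,\lambda}$ then gives the claimed inequality. The \emph{main obstacle} is precisely this double optimization: extracting the exact exponent $\lambda$ on the logarithm from the $L^q$-normalized norms, and selecting $\epsilon$ so that the argument of the logarithm takes the stated form while the constants depend only on $\delta$ and $\lambda$; by contrast the Morrey embedding and the Young/averaging estimates are routine.
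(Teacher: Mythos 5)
Your proposal is correct, but it takes a genuinely different route from the paper's. The paper localizes with a cutoff $\phi$ supported in $B_R$, sets $\zeta=\phi F$, represents $\zeta^p(\mathbf{0})$ via the two-dimensional Newtonian potential $\frac{1}{2\pi}\log\abs{\by}$, integrates by parts, and runs a two-region H\"older estimate near and far from the singularity of $\by/\abs{\by}^2$ (with the choice of $\veps$ in \eqref{choose_eps_logarithmic_inequality_simplified_3D_HM}), finally taking $p=\log\big(e^2+R^{\delta/(2+\delta)}\norm{\nabla\zeta}_{\LpR{2+\delta}{2}}\big)$ so that the factor $(\cdot)^{1/p}$ is at most $e$. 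You instead decompose globally, $F=F*\rho_\epsilon+(F-F*\rho_\epsilon)$, bound the mollified part by Young's inequality and the scaling $\norm{\rho_\epsilon}_{\LpR{q'}{2}}\leq C\epsilon^{-2/q}$, and extract the $\log^{\lambda}$ by optimizing $q\sim\max\{2,(2/\lambda)\log(R/\epsilon)\}$ --- structurally the same ``choose the exponent logarithmically large'' device the paper applies to $p$, and in both arguments this is exactly where the normalization $q^{\lambda}R^{2/q}$ in the sup is consumed --- while the remainder is handled by Morrey's embedding ($\alpha=\delta/(2+\delta)$, constant depending only on $\delta$), which replaces the paper's potential representation; your balancing choice of $\epsilon$, capped at $R$ (harmless since the max-factor is $\geq1$, and the degenerate case $\nabla F\equiv0$ is trivial since then $F=0$ in $W^{1,2+\delta}(\nR^2)$), plays the role of \eqref{choose_eps_logarithmic_inequality_simplified_3D_HM}. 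Each route buys something: yours is more modular and elementary given Morrey, and because you need no cutoff, the term $CR^{\delta/(2+\delta)}\norm{F}_{\LpR{2+\delta}{2}}/R$ inside the logarithm --- which in the paper is generated by $\nabla\phi$ --- never appears, so you actually prove the slightly stronger bound with log-argument $e^2+CR^{\delta/(2+\delta)}\norm{\nabla F}_{\LpR{2+\delta}{2}}$, which implies the stated one since enlarging the argument of the logarithm only increases the right-hand side. One bookkeeping remark: your bound carries the factor $R^{\delta/(2+\delta)}$ on $\norm{\nabla F}_{\LpR{2+\delta}{2}}$, whereas the lemma as printed omits it on that term; but the paper's own proof produces it too (insert $\norm{\nabla\zeta}_{\LpR{2+\delta}{2}}\leq C(\norm{F}_{\LpR{2+\delta}{2}}/R+\norm{\nabla F}_{\LpR{2+\delta}{2}})$ into \eqref{log_inequality_p_3}), and Corollary \ref{periodic_logarithmic_inequality} restores it, so you match the intended statement and there is no gap on your side.
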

\begin{proof}
Without loss of generality, we assume that ${\norm{F}}_{\LpR{\infty}{2}} = F({\bf{0}})\not=0.$ Denote by $B_R$ the disk centered at the origin with radius R. Let $\phi \in \CinftyR{2}$ be a smooth non-negative cutoff function satisfying:
 \begin{align}
 \phi({\bf0}) = 1, \qquad \norm{\phi}_{\LpR{\infty}{2}} \leq 1, \qquad  \norm{\grad \phi}_{\LpR{\infty}{2}} \leq \frac{C}{R}, \qquad \text{supp}(\phi) \subset B_R.
 \end{align}
Set $\zeta = F\phi$. From the solution formula of the Laplace equation in $\nR^2$ we have, for any $p\geq2$,
 \begin{align*}
 \abs{\zeta^p({\bf{0}})} = \abs{\frac{1}{2\pi} \int_{\nR^2} \log(\abs{\by}) \lap \zeta^p({\bf y})\, d\by|} = \abs{\frac{1}{2\pi} \int_{B_R} \log(\abs{\by}) \lap\zeta^p({\bf y})\, d\by|}.
 \end{align*}
By integration by parts,
\begin{align*}
 \abs{\zeta^p({\bf 0})} & = \abs{\frac{p}{2\pi} \int_{B_R} \frac{\by\cdot\grad \zeta}{{\abs{\by}}^2}\zeta^{p-1} \,d\by}\\
 & \leq  \abs{\frac{p}{2\pi} \int_{B_\veps} \frac{\by\cdot\grad \zeta}{{\abs{\by}}^2}\zeta^{p-1} \,d\by}
 + \abs{\frac{p}{2\pi} \int_{B_R\setminus B_\veps} \frac{\by\cdot\grad \zeta}{{\abs{\by}}^2}\zeta^{p-1} \,d\by},
 \end{align*}
for some $\veps \in (0,R)$, to be chosen later in \eqref{choose_eps_logarithmic_inequality_simplified_3D_HM}. By H\"older inequality,
 \begin{align*}
\abs{\zeta^p({\bf0})} \leq Cp& \norm{\frac{\by}{\abs{\by}^2}}_{L^\alpha(B_\veps)} \norm{\grad\zeta}_{L^{2+\delta}(B_\veps)} \norm{\zeta}^{p-1}_{L^{\beta(p-1)}(B_\veps)} \\ &+ Cp  \norm{\frac{\by}{\abs{\by}^2}}_{L^2(B_R\setminus B_\veps)} \norm{\grad\zeta}_{L^{2+\delta}(B_R\setminus B_\veps)} \norm{\zeta}^{p-1}_{L^{\frac{2(2+\delta)}{\delta}(p-1)}(B_L\setminus B_\veps)},
 \end{align*}
where $\alpha$ and $\beta$ satisfy $\frac{1}{\alpha}+\frac{1}{\beta}=\frac{1+\delta}{2+\delta}$, and $\alpha \in(1,2)$ is chosen such that $\norm{\frac{\by}{\abs{\by}^2}}_{L^\alpha(B_\veps)}<\infty$. For instant, one may choose:
 \begin{align}\label{logarithmic_inequality_simplified_3d_HM_alfa_beta}
 \alpha = \frac {4+\delta}{2+\delta} \quad \text{and} \quad \beta = \frac{(2+\delta)(4+\delta)}{\delta}.
 \end{align}
This implies:
 \begin{align}\label{log_inequality_p}
 \abs{\zeta^{p}({\bf0})} \leq C_{\delta}p& \veps^{\frac{2-\alpha}{\alpha}} \norm{\grad \zeta}_{\LpR{2+\delta}{2}} \norm{\zeta}^{p-1}_{\LpR{\beta(p-1)}{2}} \notag\\
 &+ Cp\log^{1/2}\left(\frac{R}{\veps}\right)\norm{\grad\zeta}_{\LpR{2+\delta}{2}} \norm{\zeta}^{p-1}_{\LpR{\frac{2(2+\delta)}{\delta}(p-1)}{2}}.
 \end{align}
 Now, we choose $\veps \in (0,R)$, small enough, such that:
 \begin{align}\label{choose_eps_logarithmic_inequality_simplified_3D_HM}
 \left(\frac{\veps}{R}\right)^{\frac{2-\alpha}{\alpha}}\left(e^2 + R^{\delta/(2+\delta)}\norm{\grad\zeta}_{\LpR{2+\delta}{2}}\right) = 1,
 \end{align}
thus we have,
 \begin{align*}
 \veps^{\frac{2-\alpha}{\alpha}}\norm{\grad\zeta}_{\LpR{2+\delta}{2}}&\leq R^{\frac{2-\alpha}{\alpha}} R^{\frac{-\delta}{2+\delta}},
 \end{align*}
 \begin{align*}
\log\left(\frac{R}{\veps}\right) &= \frac{2-\alpha}{\alpha} \log \left (e^2 + R^{\delta/(2+\delta)}\norm{\grad\zeta}_{\LpR{2+\delta}{2}}\right).
\end{align*}
Recall that $p^{1/p} \leq C$ for every $p\geq2$, thus \eqref{log_inequality_p} and \eqref{choose_eps_logarithmic_inequality_simplified_3D_HM} yield,
 \begin{align}\label{log_inequality_p_2}
&\abs{\zeta({\bf0})} \leq C_{\delta}\left(R^{\frac{2-\alpha}{\alpha}} R^{\frac{-\delta}{2+\delta}}\right)^{1/p}\norm{\zeta}^{1-\frac{1}{p}}_{\LpR{\beta(p-1)}{2}} \\
&+C \left(\frac{2-\alpha}{\alpha}\log\left (e^2 + R^{\delta/(2+\delta)}\norm{\grad\zeta}_{\LpR{2+\delta}{2}}\right)\right)^{\frac{1}{2p}}\norm{\grad\zeta}_{\LpR{2+\delta}{2}}^{1/p}
\norm{\zeta}^{1-\frac{1}{p}}_{\LpR{\frac{2(2+\delta)}{\delta}(p-1)}{2}}.\notag
\end{align}
Observe that since $\beta\geq6$ (see \eqref{logarithmic_inequality_simplified_3d_HM_alfa_beta}) and $p\geq2$, then $\beta(p-1) \geq 2$, $\beta^{\lambda(1-\frac{1}{p})} \leq \beta^\lambda$ and $(p-1)^{\lambda(1-\frac{1}{p})} \leq p^\lambda$. Then,

\begin{align}\label{logarithmic_inequality_3d_simplified_HM_1}
\norm{\zeta}^{1-\frac{1}{p}}_{\LpR{\beta(p-1)}{2}}& = R^{2/(p\beta)}(\beta(p-1))^{\lambda(1-\frac{1}{p})}\left( \frac{\norm{\zeta}_{\LpR{\beta(p-1)}{2}}}{(\beta(p-1))^{\lambda} R^{2/(\beta(p-1))}}\right)^{1-\frac{1}{p}} \notag \\
&\leq R^{2/(p\beta)}(\beta(p-1))^{\lambda(1-\frac{1}{p})}\left( \sup_{q\geq2} \frac{\norm{\zeta}_{\LpR{q}{2}}}{q^{\lambda} R^{2/q}} \right)^{1-\frac{1}{p}} \notag \\
&\leq C R^{2/(p\beta)}(\beta(p-1))^{\lambda(1-\frac{1}{p})} \left(\sup_{q\geq2} \frac{\norm{F}_{\LpR{q}{2}}}{q^{\lambda} R^{2/q}} \right)^{1-\frac{1}{p}} \notag \\
& \leq C_{\delta,\lambda}R^{2/(p\beta)} p^{\lambda} \max \set{1, \sup_{q\geq2} \frac{\norm{F}_{\LpR{q}{2}}}{q^{\lambda} R^{2/q}}}.
\end{align}
Similar argument shows that,
\begin{align}\label{logarithmic_inequality_3d_simplified_HM_2}
\norm{\zeta}^{1-\frac{1}{p}}_{\LpR{\frac{2(2+\delta)}{\delta}(p-1)}{2}} &\leq C_{\delta,\lambda}R^{\delta/(p(2+\delta))} p^{\lambda} \max \set{1, \sup_{q\geq2} \frac{\norm{F}_{\LpR{q}{2}}}{q^{\lambda} R^{2/q}}}.
\end{align}
Since, due to \eqref{logarithmic_inequality_simplified_3d_HM_alfa_beta}, $R^{\frac{2-\alpha}{\alpha}} R^{\frac{-\delta}{2+\delta}}R^{2/\beta}=1$ and $\left(\frac{2-\alpha}{\alpha}\right)^{1/(2p)} \leq1$, \eqref{log_inequality_p_2}, \eqref{logarithmic_inequality_3d_simplified_HM_1} and \eqref{logarithmic_inequality_3d_simplified_HM_2} will yield:
\begin{align}\label{logarithmic_inequality_simplified_3d_HM_2.11}
\abs{\zeta({\bf0})} &\leq C_{\delta,\lambda}\left(1+\log ^{1/(2p)}\left (e^2 + R^{\delta/(2+\delta)}\norm{\grad\zeta}_{\LpR{2+\delta}{2}}\right)\right)\notag \\
& \qquad \qquad \left(R^{\delta/(2+\delta)}\norm{\nabla\zeta}_{\LpR{2+\delta}{2}}\right)^{1/p} p^{\lambda} \max \set{1, \sup_{q\geq2} \frac{\norm{F}_{\LpR{q}{2}}}{q^{\lambda} R^{2/q}}}.
\end{align}
Now, we choose $p\geq2$ such that:
\begin{align}
p = \log\left(e^2+R^{\delta/(2+\delta)}\norm{\nabla \zeta}_{\LpR{2+\delta}{2}}\right),
\end{align}
then we have,
\begin{align}\label{logarithmic_inequality_simplified_3d_HM_2.13}
\left(R^{\delta/(2+\delta)}\norm{\nabla\zeta}_{\LpR{2+\delta}{2}}\right)^{1/p} \leq e.
\end{align}
Thus, we can conclude from \eqref{logarithmic_inequality_simplified_3d_HM_2.11} and \eqref{logarithmic_inequality_simplified_3d_HM_2.13} that
\begin{align}\label{log_inequality_p_3}
\abs{\zeta({\bf0})} &\leq C_{\delta,\lambda}\max \set{1, \sup_{q\geq2} \frac{\norm{F}_{\LpR{q}{2}}}{q^{\lambda} R^{2/q}}}\log^{\lambda}\left(e^2 + R^{\delta/(2+\delta)}\norm{\grad\zeta}_{\LpR{2+\delta}{2}}  \right) .
\end{align}
Finally, notice that:
\begin{align*}
\abs{\zeta({\bf0})} &= \abs{\phi({\bf0})	F({\bf0})} = \norm{F}_{\LpR{\infty}{2}},
\end{align*}
and that
\begin{align*}
\norm{\nabla\zeta}_{\LpR{2+\delta}{2}} &= \norm{\nabla(\phi F)}_{\LpR{2+\delta}{2}}\\
&\leq \norm{\nabla\phi}_{\LpR{\infty}{2}}\norm{F}_{\LpR{2+\delta}{2}} + \norm{\nabla F}_{\LpR{2+\delta}{2}}\norm{\phi}_{\LpR{\infty}{2}}\\
&\leq C\left( \frac{\norm{F}_{\LpR{2+\delta}{2}}}{R}+ \norm{\nabla F}_{\LpR{2+\delta}{2}}\right).
\end{align*}
This proves inequality \eqref{logarithmic_inequality_3D_HM_stated}.
\end{proof}

\begin{corollary}\label{periodic_logarithmic_inequality}
Let $F: \nT^2 \maps \nR$ be a scalar valued periodic function with periodic domain $\nT^2$, $F \in W^{1,2+\delta}(\nT^2)$, and $\delta>0$ be given. Then for any $\lambda \in (0,\infty)$ the following logarithmic inequality holds:
\begin{align}\label{logarithmic_inequality_3D_HM_P}
&\norm{F}_{L^{\infty}(\nT^2)} \leq \notag \\
&\qquad C_{\delta,\lambda}\max\set{1,\sup_{q\geq2} \frac{\norm{F}_{L^q(\nT^2)}}{q^{\lambda} L^{2/q}}}\log^{\lambda}\left(e^2 + C L^{\delta/(2+\delta)}\norm{F}_{W^{1,2+\delta}(\nT^2)}\right).
\end{align}
where $C_{\delta,\lambda}$ is an absolute positive constant that depends on $\delta$ and $\lambda$.
\end{corollary}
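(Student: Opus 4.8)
The plan is to read off the periodic estimate from Lemma~\ref{logarithmic_inequality_3D_HM} by localizing $F$ with a cutoff and applying the $\nR^2$ result with radius $R$ comparable to the period $L$. First I would note that $F\in W^{1,2+\delta}(\nT^2)$ embeds continuously into $C^0(\nT^2)$ (for $\delta>0$), so $\norm{F}_{L^\infty(\nT^2)}$ is attained at some point; using the periodicity of $F$ and the translation invariance of every norm appearing in \eqref{logarithmic_inequality_3D_HM_P}, I may assume without loss of generality that $\norm{F}_{L^\infty(\nT^2)}=\abs{F(\bf 0)}$. Then I would fix a cutoff $\chi\in\CinftyR{2}$ with $0\le\chi\le1$, $\chi({\bf 0})=1$, $\text{supp}(\chi)\subset B_L$, and $\norm{\grad\chi}_{\LpR{\infty}{2}}\le C/L$, and set $\tilde F=F\chi$. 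Since $\chi$ is smooth and compactly supported and $F$ is locally $W^{1,2+\delta}$, we have $\tilde F\in W^{1,2+\delta}(\nR^2)$, so Lemma~\ref{logarithmic_inequality_3D_HM} applies to $\tilde F$ with the choice $R=L$.

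The second step is to convert each $\nR^2$-norm of $\tilde F$ back into a periodic norm of $F$. Because $\chi({\bf 0})=1$ and $0\le\chi\le1$ one has $\norm{\tilde F}_{\LpR{\infty}{2}}=\norm{F}_{L^\infty(\nT^2)}$. Because $B_L$ meets only a bounded number (at most nine) of period cells, periodicity gives $\norm{\tilde F}_{\LpR{q}{2}}\le 3\,\norm{F}_{L^q(\nT^2)}$ for every $q\ge2$, with the constant $3$ uniform in $q$; since $R=L$ makes $R^{2/q}=L^{2/q}$, this yields
\begin{align*}
\sup_{q\ge2}\frac{\norm{\tilde F}_{\LpR{q}{2}}}{q^{\lambda}R^{2/q}}\le 3\,\sup_{q\ge2}\frac{\norm{F}_{L^q(\nT^2)}}{q^{\lambda}L^{2/q}}.
\end{align*}
For the logarithmic argument I would use $\grad\tilde F=\chi\grad F+F\grad\chi$ together with $\norm{\grad\chi}_{\LpR{\infty}{2}}\le C/L$ and the same finite-overlap bound to get
\begin{align*}
\norm{\grad\tilde F}_{\LpR{2+\delta}{2}}\le C\Big(\norm{\grad F}_{L^{2+\delta}(\nT^2)}+\tfrac1L\norm{F}_{L^{2+\delta}(\nT^2)}\Big),
\end{align*}
and likewise $\tfrac1R\norm{\tilde F}_{\LpR{2+\delta}{2}}\le \tfrac{C}{L}\norm{F}_{L^{2+\delta}(\nT^2)}$. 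Up to the $L$-scaling built into the $W^{1,2+\delta}(\nT^2)$ norm, the right-hand sides are bounded by $C\,\norm{F}_{W^{1,2+\delta}(\nT^2)}$, so $R^{\delta/(2+\delta)}$ times the bracket inside the logarithm in Lemma~\ref{logarithmic_inequality_3D_HM} is at most $CL^{\delta/(2+\delta)}\norm{F}_{W^{1,2+\delta}(\nT^2)}$.

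Substituting these comparisons into the conclusion of Lemma~\ref{logarithmic_inequality_3D_HM} and absorbing the fixed geometric constants into $C_{\delta,\lambda}$ and $C$ produces exactly \eqref{logarithmic_inequality_3D_HM_P}. The main obstacle I anticipate is not the invocation of the lemma but the bookkeeping forced by the cutoff: a periodic $F$ is not integrable on $\nR^2$, so it cannot be fed to Lemma~\ref{logarithmic_inequality_3D_HM} directly, and localization generates the commutator term $F\grad\chi$. Two observations make this harmless: (i) choosing $R\sim L$ aligns the cutoff scale with the period and converts the powers $R^{2/q}$ and $R^{\delta/(2+\delta)}$ into the advertised $L$-powers, and (ii) the zeroth-order error $\tfrac1L\norm{F}_{L^{2+\delta}(\nT^2)}$ is dominated by the full $\norm{F}_{W^{1,2+\delta}(\nT^2)}$, so it folds into the logarithm without degrading the estimate.
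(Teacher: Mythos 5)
Your proposal is correct and follows essentially the same route as the paper: the paper likewise truncates $F$ by a cutoff supported in a ball of radius comparable to $L$ (it takes $R=4L$), uses periodicity and the finite overlap of that ball with period cells to bound $\norm{\zeta}_{L^p(B_{4L})}\leq 4^{1/p}\norm{F}_{L^p(\nT^2)}$ and $\norm{\nabla\zeta}_{L^p(B_{4L})}\leq C4^{1/p}\norm{F}_{W^{1,p}(\nT^2)}$ uniformly in $p\geq 2$, and then reruns the argument of Lemma~\ref{logarithmic_inequality_3D_HM}. The only cosmetic difference is that you apply the lemma as a black box to $F\chi\in W^{1,2+\delta}(\nR^2)$ with $R=L$ (legitimate, since the lemma holds for any such function and any radius), whereas the paper repeats the lemma's proof steps with the periodic norm substitutions; your bookkeeping of the $L^\infty$ identity, the $q$-uniform overlap constant, and the commutator term $F\nabla\chi$ matches the paper's.
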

\begin{proof}
Since $F$ is a periodic function with periodic domain $\nT^2 = [0,L]^2$, in the above proof we may choose $R=4L$. Notice that for any $p>1$
$$\norm{\zeta}_{L^p(B_{4L})}\leq \norm{\phi}_{L^\infty(B_{4L})}\norm{F}_{L^p(B_{4L})} \leq 4^{1/p}\norm{F}_{L^p(\nT^2)},$$
and that

\begin{align*}
\norm{\nabla\zeta}_{L^p(B_{4L})} &\leq \norm{\phi}_{L^\infty(B_{4L})}\norm{\nabla F}_{L^p(B_{4L})} + \norm{\nabla \phi}_{L^\infty(B_{4L})}\norm{F}_{L^p(B_{4L})}\\
&\leq C4^{1/p}\left(\frac{\norm{F}_{L^p(\nT^2)}}{L}+\norm{\nabla F}_{L^p(\nT^2)}\right) = C4^{1/p}\norm{F}_{W^{1,p}(\nT^2)}.
\end{align*}
Thus, one can follow the steps of the proof of the previous lemma to prove inequality \eqref{logarithmic_inequality_3D_HM_P}.
\end{proof}

\begin{proposition}\label{nabla_h_u_Linfty_estimate_simplified_3D_HM}
Let $\omega \in \LpP{\infty}\cap\LzWhP{\infty}{1}{4}$, $\omega \not= 0$. Let $\bu = K\convh \omega$. Then $\nabla_h \bu \in \LpP{\infty}$ and,
\begin{align}
\norm{\nabla_h \bu}_{\LpP{\infty}} & \leq C \norm{\omega}_{\LpP{\infty}}\log\left(e^2+ CL^2\frac{\norm{\nabla_h\omega}_{\LzLphP{\infty}{4}}}{\norm{\omega}_{\LpP{2}}}\right).
\end{align}
\end{proposition}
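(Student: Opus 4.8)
The plan is to treat the vertical variable $z$ as a parameter: for each fixed $z\in[0,L]$ I would apply the periodic logarithmic inequality of Corollary \ref{periodic_logarithmic_inequality} to the horizontal function $F(\cdot,\cdot)=\nabla_h\bu(\cdot,\cdot,z)$ on $\nT^2$, choosing $\delta=2$ (so that $2+\delta=4$ matches the hypothesis $\omega\in\LzWhP{\infty}{1}{4}$) and $\lambda=1$ (to match the single power of the logarithm in the claim). With these choices \eqref{logarithmic_inequality_3D_HM_P} reduces the proposition to controlling two quantities: the supremum factor $\sup_{q\geq2}\norm{\nabla_h\bu(z)}_{\LphP{q}}/(q\,L^{2/q})$ and the argument $L^{1/2}\norm{\nabla_h\bu(z)}_{\WhP{1}{4}}$ of the logarithm, both uniformly in $z$.

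For the supremum factor I would invoke the Biot--Savart elliptic estimate \eqref{elliptic_regularity_Yudovich}, which gives $\norm{\nabla_h\bu(z)}_{\LphP{q}}\leq Cq\norm{\omega(z)}_{\LphP{q}}$, so the factor $q$ cancels and one is left with $C\sup_{q\geq2}\norm{\omega(z)}_{\LphP{q}}/L^{2/q}$. Since $\norm{\omega(z)}_{\LphP{q}}\leq\norm{\omega}_{\LpP{\infty}}L^{2/q}$ by the trivial $L^\infty\hookrightarrow L^q$ bound on the finite-measure domain $\nT^2$, this supremum is at most $C\norm{\omega}_{\LpP{\infty}}$, uniformly in $z$. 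Consequently the $\max\{1,\cdot\}$ prefactor in \eqref{logarithmic_inequality_3D_HM_P} is bounded by $\max\{1,C\norm{\omega}_{\LpP{\infty}}\}$.

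For the argument of the logarithm I would estimate $\norm{\nabla_h\bu(z)}_{\WhP{1}{4}}$ by $\norm{\nabla_h\omega(z)}_{\LphP{4}}$. Differentiating the Biot--Savart law \eqref{conv} yields $\partial_j\bu=K\convh\partial_j\omega$, and the field $\partial_j\bu$ is divergence free with vorticity $\partial_j\omega$; applying \eqref{elliptic_regularity_Yudovich} with $p=4$ to this field gives the top-order bound $\norm{\nabla_h^2\bu(z)}_{\LphP{4}}\leq C\norm{\nabla_h\omega(z)}_{\LphP{4}}$. The lower-order part is handled by \eqref{elliptic_regularity_Yudovich} together with the Poincar\'e inequality (the $L^4$ analogue of \eqref{Poincare_inequality}, legitimate because $\int_{\nT^2}\omega(x,y,z)\,dx\,dy=0$, being the horizontal curl of a periodic field), giving $\norm{\nabla_h\bu(z)}_{\LphP{4}}\leq C\norm{\omega(z)}_{\LphP{4}}\leq CL\norm{\nabla_h\omega(z)}_{\LphP{4}}$. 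After accounting for the $L$-dependent weights in the definition of $\norm{\cdot}_{\WhP{1}{4}}$, these combine to $\norm{\nabla_h\bu(z)}_{\WhP{1}{4}}\leq C\norm{\nabla_h\omega(z)}_{\LphP{4}}\leq C\norm{\nabla_h\omega}_{\LzLphP{\infty}{4}}$, so that $L^{1/2}\norm{\nabla_h\bu(z)}_{\WhP{1}{4}}\leq CL^{1/2}\norm{\nabla_h\omega}_{\LzLphP{\infty}{4}}$, uniformly in $z$.

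Substituting these two bounds into \eqref{logarithmic_inequality_3D_HM_P} and taking the essential supremum over $z$ produces the intermediate estimate
\[
\norm{\nabla_h\bu}_{\LpP{\infty}}\leq C\max\{1,C\norm{\omega}_{\LpP{\infty}}\}\,\log\left(e^2+CL^{1/2}\norm{\nabla_h\omega}_{\LzLphP{\infty}{4}}\right).
\]
The final and most delicate step is to upgrade this non-homogeneous estimate to the scale-invariant form claimed. Both sides of the proposition are positively homogeneous of degree one in $\omega$, since the argument of the logarithm is scale invariant. I would therefore apply the intermediate estimate to $\mu\omega$ with $\mu=(C\norm{\omega}_{\LpP{\infty}})^{-1}$ (allowed since $\omega\neq0$) and divide by $\mu$: this collapses the prefactor to $C\norm{\omega}_{\LpP{\infty}}$ and replaces $\norm{\nabla_h\omega}_{\LzLphP{\infty}{4}}$ by $\norm{\nabla_h\omega}_{\LzLphP{\infty}{4}}/\norm{\omega}_{\LpP{\infty}}$ inside the logarithm. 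Finally I would pass from $\norm{\omega}_{\LpP{\infty}}$ to $\norm{\omega}_{\LpP{2}}$ in the denominator via $\norm{\omega}_{\LpP{\infty}}\geq L^{-3/2}\norm{\omega}_{\LpP{2}}$, absorbing the factor $L^{3/2}$ into the power of $L$ (turning $L^{1/2}$ into $L^{2}$) and using monotonicity of the logarithm. The main obstacle is precisely this homogenization, combined with the careful bookkeeping of the powers of $L$ in the weighted norms; the elliptic, Poincar\'e, and logarithmic estimates themselves are applied essentially off the shelf.
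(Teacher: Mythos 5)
Your proposal is correct and takes essentially the same route as the paper: slice in $z$, apply Corollary \ref{periodic_logarithmic_inequality} with $\delta=2$, $\lambda=1$, kill the $q$ in the supremum factor via the Yudovich estimate \eqref{elliptic_regularity_Yudovich} together with $\norm{\omega(z)}_{\LphP{q}}\leq L^{2/q}\norm{\omega}_{\LpP{\infty}}$, and control the logarithm's argument via $\norm{\nabla_h\bu(z)}_{\WhP{1}{4}}\leq C\norm{\nabla_h\omega(z)}_{\LphP{4}}$. The only difference is presentational: the paper bakes the normalization in from the start by applying the corollary to $F=L^{3/2}\nabla_h\bu/\norm{\omega}_{\LpP{2}}$, whereas you apply it to the raw $\nabla_h\bu$ and recover the scale-invariant form afterwards by homogeneity and $\norm{\omega}_{\LpP{2}}\leq L^{3/2}\norm{\omega}_{\LpP{\infty}}$ --- an equivalent device (and your explicit justification of the $W^{1,4}_h$ elliptic bound, which the paper merely asserts, is a small bonus).
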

\begin{proof}
Due to \eqref{elliptic_regularity_Yudovich} we have,
\begin{align*}
\norm{\nabla_h \bu(z)}_{\LphP{q}} \leq C q \norm{\omega(z)}_{\LphP{q}} \leq &C qL^{2/q} \norm{\omega(z)}_{\LphP{\infty}},
\end{align*}
and
\begin{align*}
\norm{\nabla_h \bu(z)}_{\WhP{1}{4}} \leq &C \norm{\nabla_h\omega(z)}_{\LphP{4}},
\end{align*}
for every $q \in (1,\infty)$ and every $z\in[0,L]$, where the constant $C$ is independent of $z$.
Set $$F =  L ^{3/2}\frac{\nabla_h\bu}{\norm{\omega}_{\LpP{2}}},$$
and apply the logarithmic inequality \eqref{logarithmic_inequality_3D_HM_P} with $\lambda=1$ and $\delta=2$, we obtain
\begin{align*}
\norm{\nabla_h\bu(z)}_{\LphP{\infty}} &\leq C\max\set{ \frac{\norm{\omega}_{\LpP{2}}}{L^{3/2}}, \norm{\omega(z)}_{\LphP{\infty}}} \log\left(e^2 + CL^2 \frac{\norm{\nabla_h\omega(z)}_{\LphP{4}}}{\norm{\omega}_{\LpP{2}}}\right).
\end{align*}
for every $z\in[0,L]$. This implies that
\begin{align*}
\norm{\nabla_h \bu}_{\LpP{\infty}} & \leq C \norm{\omega}_{\LpP{\infty}}\log\left(e^2+ CL^2\frac{\norm{\nabla_h\omega}_{\LzLphP{\infty}{4}}}{\norm{\omega}_{\LpP{2}}}\right).
\end{align*}
\end{proof}

\begin{corollary}
Let $\theta, \eta \in \LpP{\infty}\cap\LzWhP{\infty}{1}{4}$, $\nabla_h\times \bu = \frac{1}{2L}(\theta-\eta)$, $(\theta,\eta)\not = {\bf 0}$. Then $\nabla_h \bu \in \LpP{\infty}$, and
\begin{align}\label{logarithmic_inequality_simplified_3D_HM}
\norm{\nabla_h \bu}_{\LpP{\infty}} & \leq \frac{C}{L}\norm{(\theta,\eta)}_{\nLpP{\infty}}\log\left(e^2+ CL^2\frac{\norm{(\nabla_h\theta,\nabla_h\eta)}_{\nLzLphP{\infty}{4}}}{\norm{(\theta,\eta)}_{\nLpP{2}}}\right).
\end{align}
\end{corollary}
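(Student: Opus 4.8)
The plan is to apply Proposition~\ref{nabla_h_u_Linfty_estimate_simplified_3D_HM} to the single scalar vorticity $\omega=\frac{1}{2L}(\theta-\eta)$, which is exactly $\nabla_h\times\bu$ by hypothesis, and then to re-express every factor on the right-hand side in terms of the pair $(\theta,\eta)$ by the triangle inequality. First I would dispose of the degenerate case $\theta\equiv\eta$: then $\omega\equiv0$, so $\bu\equiv0$ by the side condition $\int_{\nT^2}\bu\,dx\,dy=0$ that singles out the Biot--Savart solution, and the asserted inequality holds trivially since its right-hand side is nonnegative (the logarithm is at least $\log e^2=2$). Hence I may assume $\omega\neq0$, so that Proposition~\ref{nabla_h_u_Linfty_estimate_simplified_3D_HM} applies and gives
\begin{equation*}
\norm{\nabla_h\bu}_{\LpP{\infty}}\le C\norm{\omega}_{\LpP{\infty}}\log\!\left(e^2+CL^2\frac{\norm{\nabla_h\omega}_{\LzLphP{\infty}{4}}}{\norm{\omega}_{\LpP{2}}}\right).
\end{equation*}

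For the prefactor I would use $\norm{\omega}_{\LpP{\infty}}=\frac{1}{2L}\norm{\theta-\eta}_{\LpP{\infty}}\le\frac{1}{2L}\norm{(\theta,\eta)}_{\nLpP{\infty}}$, which produces exactly the factor $\frac{C}{L}\norm{(\theta,\eta)}_{\nLpP{\infty}}$. For the numerator inside the logarithm the same triangle inequality gives $\norm{\nabla_h\omega}_{\LzLphP{\infty}{4}}=\frac{1}{2L}\norm{\nabla_h(\theta-\eta)}_{\LzLphP{\infty}{4}}\le\frac{1}{2L}\norm{(\nabla_h\theta,\nabla_h\eta)}_{\nLzLphP{\infty}{4}}$. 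Writing also $\norm{\omega}_{\LpP{2}}=\frac{1}{2L}\norm{\theta-\eta}_{\LpP{2}}$, the factors $\frac{1}{2L}$ in numerator and denominator cancel, so the argument of the logarithm becomes $CL^2\,\norm{\nabla_h(\theta-\eta)}_{\LzLphP{\infty}{4}}/\norm{\theta-\eta}_{\LpP{2}}$.

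The remaining, and genuinely delicate, step is to replace $\norm{\theta-\eta}_{\LpP{2}}$ in that denominator by the larger quantity $\norm{(\theta,\eta)}_{\nLpP{2}}$. This is where I expect the main obstacle, because $\norm{\theta-\eta}_{\LpP{2}}$ can be much smaller than $\norm{(\theta,\eta)}_{\nLpP{2}}$ when $\theta$ and $\eta$ share a large common mode, so the replacement enlarges the logarithm and cannot be justified factor by factor; it must be offset against the compensating gain in the prefactor. Concretely I would split into two regimes. If $\norm{\theta-\eta}_{\LpP{2}}\gtrsim\norm{(\theta,\eta)}_{\nLpP{2}}$, the ratio changes only by a bounded factor, and combining the elementary bound $\log(e^2+cx)\le C\log(e^2+x)$ (for a fixed $c\ge1$, using $\log(e^2+x)\ge2$) with monotonicity of the logarithm closes the estimate at once. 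If instead $\norm{\theta-\eta}_{\LpP{2}}\ll\norm{(\theta,\eta)}_{\nLpP{2}}$, the target prefactor $\frac{C}{L}\norm{(\theta,\eta)}_{\nLpP{\infty}}$ is correspondingly large and must absorb the enlarged logarithm; the key structural point that makes this possible is that a fluctuation $\theta-\eta$ of small $L^2$-mass but fixed amplitude is forced to have a large gradient norm $\norm{\nabla_h(\theta-\eta)}_{\LzLphP{\infty}{4}}$, so that the two logarithms remain comparable up to a constant. Carrying out this last comparison with the correct powers of $L$ is the only nonroutine part of the argument.
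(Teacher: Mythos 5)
Your reductions of the prefactor and of the numerator inside the logarithm are fine, and you have correctly located the delicate point in the denominator; but your proposed resolution of that point fails. Having already committed, before the regime split, to the lossy prefactor bound $\norm{\omega}_{\LpP{\infty}}\le\frac{1}{2L}\norm{(\theta,\eta)}_{\nLpP{\infty}}$, in the regime $\norm{\theta-\eta}_{\LpP{2}}\ll\norm{(\theta,\eta)}_{\nLpP{2}}$ your chain of inequalities requires the two logarithms to be comparable up to a constant, and they are not. Take $\theta=\eta+\veps g$ with $\eta$ fixed, smooth and nonconstant in the horizontal variables, and $g$ fixed and smooth. As $\veps\to0$, the ratio $\norm{(\nabla_h\theta,\nabla_h\eta)}_{\nLzLphP{\infty}{4}}/\norm{\theta-\eta}_{\LpP{2}}$ blows up like $1/\veps$ while $\norm{(\nabla_h\theta,\nabla_h\eta)}_{\nLzLphP{\infty}{4}}/\norm{(\theta,\eta)}_{\nLpP{2}}$ stays bounded, so the quotient of the two logarithms grows like $\log(1/\veps)$. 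Your key structural claim --- that small $L^2$ mass of $\theta-\eta$ \emph{at fixed amplitude} forces a large gradient norm --- is true but beside the point: the hard regime also contains the small-amplitude case $\norm{\theta-\eta}_{\LpP{\infty}}\sim\veps$, where no such gradient blow-up occurs, and that is exactly where the example lives. The corollary itself survives there only because $\nabla_h\bu$ is of size $O(\veps)$, i.e.\ because the \emph{true} prefactor $\norm{\omega}_{\LpP{\infty}}$ is tiny --- information you discarded with the triangle inequality in the first step. Your intermediate bound $\frac{C}{L}\norm{(\theta,\eta)}_{\nLpP{\infty}}\log\bigl(e^2+CL^2\norm{(\nabla_h\theta,\nabla_h\eta)}_{\nLzLphP{\infty}{4}}/\norm{\theta-\eta}_{\LpP{2}}\bigr)$ genuinely exceeds the target right-hand side by the unbounded factor $\log(1/\veps)$, so no subsequent estimate can recover the statement from it.

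The repair is not to patch the comparison but to bypass Proposition \ref{nabla_h_u_Linfty_estimate_simplified_3D_HM} altogether and re-invoke the underlying logarithmic inequality, which is what the paper does: Lemma \ref{logarithmic_inequality_3D_HM} (in the form \eqref{logarithmic_inequality_3D_HM_stated}, or its periodic version, Corollary \ref{periodic_logarithmic_inequality}) holds for an \emph{arbitrary} scalar function $F$, so the normalization is free, and one chooses, slice by slice in $z$ and with $\lambda=1$, $\delta=2$,
\begin{align*}
F \;=\; L^{5/2}\,\frac{\nabla_h\bu}{\norm{(\theta,\eta)}_{\nLpP{2}}},
\end{align*}
which plants $\norm{(\theta,\eta)}_{\nLpP{2}}$ --- not $\norm{\theta-\eta}_{\LpP{2}}$ --- in the denominator inside the logarithm from the outset, so the problematic replacement never arises. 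The only points to verify are routine: by the elliptic estimate \eqref{elliptic_regularity_Yudovich}, $\sup_{q\ge2}\norm{F(z)}_{\LphP{q}}/(qL^{2/q})\le CL^{3/2}\norm{(\theta,\eta)}_{\nLpP{\infty}}/\norm{(\theta,\eta)}_{\nLpP{2}}$, and H\"older's inequality $\norm{(\theta,\eta)}_{\nLpP{2}}\le L^{3/2}\norm{(\theta,\eta)}_{\nLpP{\infty}}$ shows this quantity dominates the constant $1$ in the $\max$, so after rescaling the prefactor is $\frac{C}{L}\norm{(\theta,\eta)}_{\nLpP{\infty}}$; inside the logarithm, $\norm{\nabla_h\bu(z)}_{\WhP{1}{4}}\le C\norm{\nabla_h\omega(z)}_{\LphP{4}}\le\frac{C}{L}\norm{(\nabla_h\theta,\nabla_h\eta)}_{\nLzLphP{\infty}{4}}$ produces exactly the factor $CL^2$. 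This one-line renormalization is precisely what the flexible $\sup_{q}$ structure of the lemma was designed for; treating Proposition \ref{nabla_h_u_Linfty_estimate_simplified_3D_HM} as a black box is what forced you into the untenable comparison of logarithms.
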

\begin{proof}
In this case one can choose,
$$ F = L^{5/2} \frac{\nabla_h\bu}{\norm{(\theta,\eta)}_{\nLpP{2}}}$$
in \eqref{logarithmic_inequality_3D_HM_stated} and proceed as in the above proof of Proposition \ref{nabla_h_u_Linfty_estimate_simplified_3D_HM}.
\end{proof}

\begin{proposition}\label{nonlinearity_L2_estimate_simplified_3D_HM}
Let $\omega \in \LpP{\infty}$, and $\bu = K\convh \omega$. Suppose that $\psi \in \LzWhP{\infty}{1}{4}$, then $(u\cdot\nabla_h)\psi \in \LpP{2}$ and
\begin{align}
\label{L2_estimate_nonlinearity_simplified_HM}
\norm{(\bu\cdot\nabla_h)\psi}_{\LpP{2}} &\leq C L^{2}\norm{\omega}_{\LpP{\infty}}\norm{\nabla_h\psi}_{\LzLphP{\infty}{4}}.
\end{align}
When $\omega = \frac{1}{2L}(\theta-\eta)$ with $\theta, \eta \in \LpP{\infty}$, then
\begin{align}
\label{L2_estimate_nonlinearity_simplified_HM_theta_eta}
\norm{(\bu\cdot\nabla_h)\psi}_{\LpP{2}} &\leq C L\norm{(\theta,\eta)}_{\nLpP{\infty}}\norm{\nabla_h\psi}_{\LzLphP{\infty}{4}}.
\end{align}
\end{proposition}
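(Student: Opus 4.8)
The plan is to exploit that on each horizontal slice $\{z=\text{const}\}$ the relation $\bu=K\convh\omega$ from \eqref{conv} is a purely two-dimensional Biot--Savart law, together with the side condition $\int_{\nT^2}\bu(x,y,z)\,dx\,dy=0$ recorded there. Thus I would foliate $\nT^3$ in the vertical variable, establish a planar estimate uniformly in $z$, and integrate in $z$ only at the very end. Concretely, I start from
$\norm{(\bu\cdot\nabla_h)\psi}_{\LpP{2}}^2=\int_0^L\norm{(\bu(z)\cdot\nabla_h)\psi(z)}_{L^2_h(\nT^2)}^2\,dz$
and estimate the integrand for a.e.\ fixed $z$.

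On a fixed slice, the pointwise inequality $\abs{(\bu\cdot\nabla_h)\psi}\le\abs{\bu}\,\abs{\nabla_h\psi}$ and Hölder's inequality with the conjugate exponents $4$ and $4$ give $\norm{(\bu(z)\cdot\nabla_h)\psi(z)}_{L^2_h}\le\norm{\bu(z)}_{L^4_h}\norm{\nabla_h\psi(z)}_{L^4_h}$. The second factor is already in the desired form, so the entire problem collapses to estimating $\norm{\bu(z)}_{L^4_h}$ in terms of $\norm{\omega}_{\LpP{\infty}}$, while keeping track of the powers of $L$.

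This last bound is the heart of the argument and the main obstacle, since the planar tools at my disposal---the identity \eqref{nabla_u_omega_L2} and the regularity estimate \eqref{elliptic_regularity_Yudovich}---control the \emph{gradient} $\nabla_h\bu$, whereas I need $\bu$ itself in $L^4_h$. Here I would exploit the zero horizontal-mean condition on $\bu(z)$ recorded in \eqref{conv}: the Ladyzhenskaya inequality \eqref{L4_to_H1} bounds $\norm{\bu(z)}_{L^4_h}$ by $C\norm{\bu(z)}_{L^2_h}^{1/2}\norm{\bu(z)}_{H^1_h}^{1/2}$, the Poincaré inequality \eqref{Poincare_inequality} converts both the $L^2_h$ and $H^1_h$ norms into $\norm{\nabla_h\bu(z)}_{L^2_h}$ up to the appropriate power of $L$, and the identity \eqref{nabla_u_omega_L2} with $p=2$ replaces $\norm{\nabla_h\bu(z)}_{L^2_h}$ by $\tfrac{1}{\sqrt{2}}\norm{\omega(z)}_{L^2_h}$. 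Since $\nT^2$ has area $L^2$ we then have $\norm{\omega(z)}_{L^2_h}\le L\norm{\omega(z)}_{L^\infty_h}\le L\norm{\omega}_{\LpP{\infty}}$. Tracking the powers of $L$ produces the dimensionally correct bound $\norm{\bu(z)}_{L^4_h}\le CL^{3/2}\norm{\omega}_{\LpP{\infty}}$, uniformly in $z\in[0,L]$.

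Combining the two slice estimates yields $\norm{(\bu(z)\cdot\nabla_h)\psi(z)}_{L^2_h}\le CL^{3/2}\norm{\omega}_{\LpP{\infty}}\norm{\nabla_h\psi(z)}_{L^4_h}$ for a.e.\ $z$. Squaring, integrating over $z\in[0,L]$, and bounding $\norm{\nabla_h\psi(z)}_{L^4_h}\le\norm{\nabla_h\psi}_{\LzLphP{\infty}{4}}$ uniformly in $z$ contributes one further factor $L^{1/2}$ from $\int_0^L dz$, which delivers the stated prefactor $L^2$ in \eqref{L2_estimate_nonlinearity_simplified_HM}; the finiteness of the right-hand side simultaneously shows $(\bu\cdot\nabla_h)\psi\in\LpP{2}$. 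Finally, for \eqref{L2_estimate_nonlinearity_simplified_HM_theta_eta} I would substitute $\omega=\frac{1}{2L}(\theta-\eta)$ and use $\norm{\omega}_{\LpP{\infty}}\le\frac{1}{2L}\bigl(\norm{\theta}_{\LpP{\infty}}+\norm{\eta}_{\LpP{\infty}}\bigr)=\frac{1}{2L}\norm{(\theta,\eta)}_{\nLpP{\infty}}$, which trades exactly one power of $L$ and converts the prefactor $L^2$ into $L$.
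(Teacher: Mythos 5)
Your proposal is correct and follows essentially the same route as the paper's proof: slice in $z$, apply H\"older with exponents $4$ and $4$, bound $\norm{\bu(z)}_{L^4_h}$ via the Ladyzhenskaya inequality \eqref{L4_to_H1}, the Poincar\'e inequality \eqref{Poincare_inequality}, and the vorticity control of $\nabla_h\bu$ (the paper cites \eqref{elliptic_regularity_Yudovich} where you use \eqref{nabla_u_omega_L2}, an immaterial difference at $p=2$), and finally integrate in $z$ and pass from $L^2$ to $L^\infty$ of $\omega$, with the same bookkeeping of powers of $L$. The only cosmetic deviation is that you convert to $\norm{\omega}_{\LpP{\infty}}$ at the slice level while the paper does so after integrating, and your factor $\frac{1}{2L}$ in the last step slightly sharpens the paper's $\frac{1}{L}$; both yield the stated estimates.
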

\begin{proof}
Since $\int_{\nT^2}\bu(x,y,z)\,dx\,dy = 0$, by the Sobolev type estimate \eqref{L4_to_H1}, the elliptic regularity estimate \eqref{elliptic_regularity_Yudovich}, and the Poincar\'e inequality \eqref{Poincare_inequality} we have
$$\norm{\bu(z)}_{\LphP{4}}^2 \leq C\norm{\bu(z)}_{\LphP{2}}\norm{\bu(z)}_{H^1_h(\nT^3)}\leq CL\norm{\bu(z)}_{H^1_h(\nT^3)}^2\leq CL\norm{\omega(z)}_{\LphP{2}}^2,$$
for a.e. $z\in[0,L]$. Then, by H\"older's inequality and the above inequality, we obtain
\begin{align*}
\norm{(\bu\cdot\nabla_h)\psi(z)}_{\LphP{2}}^2 &\leq \norm{\bu(z)}_{\LphP{4}}^2\norm{\nabla_h\psi(z)}_{\LphP{4}}^2, \notag \\
& \leq CL\norm{\omega(z)}_{\LphP{2}}^2 \norm{\nabla_h\psi(z)}_{\LphP{4}}^2.
\end{align*}
Integrating the last inequality with respect to $z$ over $[0,L]$ will imply:
\begin{align} \label{L2h_nonlinearity_estimate_simplified_3D_HM}
\norm{(\bu\cdot\nabla_h)\psi}_{\LpP{2}} &\leq C L^{1/2}\norm{\omega}_{\LpP{2}}\norm{\nabla_h\psi}_{\LzLphP{\infty}{4}} \notag\\
&\leq C L^2\norm{\omega}_{\LpP{\infty}}\norm{\nabla_h\psi}_{\LzLphP{\infty}{4}}.
\end{align}
When, $\omega = \frac{1}{2L}(\theta-\eta)$, its enough to notice that $\norm{\omega}_{\LpP{\infty}}\leq \frac{1}{L}\norm{(\theta,\eta)}_{\nLpP{\infty}}$. This concludes the proof.
\end{proof}

\begin{corollary}
\label{nonlinearity_estimate_simplified_3D_HM}
Let $\omega \in \LpP{2}$, and $\bu = K\convh \omega$. Suppose that $\psi \in \LzWhP{\infty}{1}{4}$ and $\phi \in \LpP{2}$, then
\begin{align}
\left| \left<(\bu\cdot \nabla_h) \psi, \phi \right>\right| \leq C L^{1/2}\norm{\omega}_{\LpP{2}}\norm{\nabla_h\psi}_{\LzLphP{\infty}{4}} \norm{\phi}_{\LpP{2}}
\end{align}
\end{corollary}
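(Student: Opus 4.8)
The plan is to reduce the bilinear estimate to the $\LpP{2}$ bound on $(\bu\cdot\nabla_h)\psi$ that is already contained, as an intermediate step, in the proof of Proposition~\ref{nonlinearity_L2_estimate_simplified_3D_HM}. The starting point is the Cauchy--Schwarz inequality on $\LpP{2}$, which immediately yields
\[
\left| \left<(\bu\cdot \nabla_h) \psi, \phi \right>\right| \leq \norm{(\bu\cdot\nabla_h)\psi}_{\LpP{2}}\, \norm{\phi}_{\LpP{2}}.
\]
Hence the entire task reduces to controlling $\norm{(\bu\cdot\nabla_h)\psi}_{\LpP{2}}$ by the quantity $C L^{1/2}\norm{\omega}_{\LpP{2}}\norm{\nabla_h\psi}_{\LzLphP{\infty}{4}}$.

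The key observation is that this is precisely the \emph{first} line of inequality~\eqref{L2h_nonlinearity_estimate_simplified_3D_HM}, and that this first line is derived using \emph{only} the fact that $\omega(z)\in\LphP{2}$. Indeed, it follows from the chain $\norm{\bu(z)}_{\LphP{4}}^2 \leq CL\norm{\omega(z)}_{\LphP{2}}^2$ (coming from \eqref{L4_to_H1}, the elliptic estimate \eqref{elliptic_regularity_Yudovich}, and the Poincar\'e inequality \eqref{Poincare_inequality}), followed by H\"older's inequality and integration in $z$ over $[0,L]$, with the supremum placed on the $\nabla_h\psi$ factor. In particular, the passage to $\norm{\omega}_{\LpP{\infty}}$ occurring in the \emph{second} line of \eqref{L2h_nonlinearity_estimate_simplified_3D_HM} is never invoked here, so the hypothesis $\omega\in\LpP{2}$ is enough. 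Thus I would simply reproduce that portion of the argument verbatim, stopping at the $\LpP{2}$-norm of $\omega$, and combine it with the Cauchy--Schwarz bound above to obtain the claimed estimate.

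Since every ingredient is already established, there is no genuine obstacle: the only point worth emphasizing is that the relevant estimate in Proposition~\ref{nonlinearity_L2_estimate_simplified_3D_HM} requires no more than $\omega\in\LpP{2}$, which is transparent from its proof. The weakening of the hypothesis from $\omega\in\LpP{\infty}$ to $\omega\in\LpP{2}$, together with pairing against an arbitrary $\phi\in\LpP{2}$ rather than estimating the $\LpP{2}$ norm directly, is exactly what distinguishes this corollary from a mere restatement of the proposition.
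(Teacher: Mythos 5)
Your proposal is correct and coincides with the paper's own proof: the paper likewise applies the Cauchy--Schwarz (H\"older) inequality to reduce to $\norm{(\bu\cdot\nabla_h)\psi}_{\LpP{2}}$ and then invokes the first line of \eqref{L2h_nonlinearity_estimate_simplified_3D_HM}, which, as you correctly observe, uses only $\omega\in\LpP{2}$. Your remark that the weakened hypothesis is transparent from the proof of Proposition \ref{nonlinearity_L2_estimate_simplified_3D_HM} is exactly the point the paper leaves implicit.
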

\begin{proof}
By H\"oldar's inequality and \eqref{L2h_nonlinearity_estimate_simplified_3D_HM} in the proof of Proposition \ref{nonlinearity_L2_estimate_simplified_3D_HM} we get
\begin{align*}
\left| \left<(\bu\cdot \nabla_h) \psi, \phi \right>\right| &\leq \norm{(\bu\cdot \nabla_h) \psi}_{\LpP{2}} \norm{\phi}_{\LpP{2}}\\
&\leq C L^{1/2}\norm{\omega}_{\LpP{2}}\norm{\nabla_h\psi}_{\LzLphP{\infty}{4}} \norm{\phi}_{\LpP{2}}.
\end{align*}
\end{proof}
\bigskip
\section{The ``Linearized" System}

Our task is to prove the global existence and uniqueness of solution of system \eqref{simplified_3d_HM_theta_eta}. We will prove our result using an iteration procedure. For this purpose, we introduce and study, in this section, a ``linearized" version of \eqref{simplified_3d_HM_theta_eta}, about a given state ${\tilde \omega}$, which will play a crucial role in the iteration procedure. The ``linearized" system is considered subject to periodic boundary conditions with a basic domain $\nT^3 = [0,L]^3 \subset \nR^3$, over any fixed arbitrary time interval $[0,T]$ and is given by
\begin{subequations}
\label{linear_simplified_3D_HM}
\begin{align}
\pd{\theta}{t} + ({\tilde{\bu}}\cdot\nabla_h)\theta - U_0\pd{\theta}{z} =0, \qquad \pd{\eta}{t} + ({\tilde{\bu}}\cdot\nabla_h)\eta +U_0\pd{\eta}{z} =0,\label{linear_simplified_3D_HM_evol}\\
 {\tilde \bu} = K\convh {\tilde{\omega}},\\
\theta(0;\bx) = \theta^0(\bx),\, \eta(0;\bx) = \eta^0(\bx),
\end{align}
\end{subequations}
where ${\tilde \omega}$ is a given periodic function.
\begin{theorem}
\label{global_existence_uniqueness_linear_simplified_HM}
Let $T>0$ be given, and let ${\tilde \omega} \in C^1([0,T]; C^2_{per}(\nT^3))$, $(\theta^0, \eta^0) \in C^2_{per}(\nT^3) \times C^2_{per}(\nT^3)$. Then \eqref{linear_simplified_3D_HM} has a unique solution $(\theta, \eta) \in C^2([0,T]; C^2_{per}(\nT^3) \times C^2_{per}(\nT^3))$ with
\begin{align} \label{Linfty_estimate_linear_simplified_HM_theta_eta}
\norm{(\theta(t),\eta(t))}_{\nLpP{p}} = \norm{(\theta^0,\eta^0)}_{\nLpP{p}},
\end{align}
for all $t\in[0,T]$ and all $p\in[1,\infty]$. Moreover,
\begin{align}\label{W14_estimate_linear_simplified_HM_theta_eta}
\sup_{t\in[0,T]}&\norm{(\nabla_h\theta(t),\nabla_h\eta(t))}_{\nLzLphP{\infty}{4}}\leq e^{{\tilde J}(T)T}\norm{(\nabla_h\theta^0,\nabla_h\eta^0)}_{\nLzLphP{\infty}{4}},
\end{align}
\begin{align}\label{L2_estimate_dztheta_dzeta_linear_simplified_HM}
\sup_{t\in[0,T]}\norm{(\partial_z\theta(t),\partial_z\eta(t))}_{\nLpP{2}}&\leq {\tilde{H}(T)} T+ \norm{(\partial_z\theta^0,\partial_z\eta^0)}_{\nLpP{2}},
\end{align}
and
\begin{align}\label{L2_estimate_dttheta_dteta_linear_simplified_HM}
&\sup_{t\in[0,T]}\norm{(\partial_t\theta(t),\partial_t\eta(t))}_{\nLpP{2}}\leq {\tilde{H}(T)} T+ \norm{(\partial_z\theta^0,\partial_z\eta^0)}_{\nLpP{2}}\notag \\
&\qquad \qquad \qquad \qquad+ C L^{2} e^{{\tilde{J}(T)}T}\norm{(\nabla_h\theta^0,\nabla_h\eta^0)}_{\nLzLphP{\infty}{4}} \sup_{t\in[0,T]}\norm{{\tilde\omega(t)}}_{\LpP{\infty}}.
 \end{align}
Furthermore,
\begin{align}\label{Linfty_estimate_u_linear_simplified_HM}
&\sup_{t\in[0,T]}\norm{\nabla_h \bu(t)}_{\LpP{\infty}} \notag \\
&\leq \frac{C}{L}\norm{(\theta^0,\eta^0)}_{\nLpP{\infty}}\log \left(e^2+CL^2e^{{\tilde J}(T)T}\frac{\norm{(\nabla_h\theta^0,\nabla_h\eta^0)}_{\nLzLphP{\infty}{4}}}{\norm{(\theta^0,\eta^0)}_{\LpP{2}}}\right).
\end{align}
Here $\bu = \frac{1}{2L}K\convh(\theta-\eta)$, ${\tilde J}(T)$ and ${\tilde H}(T)$ are constants that depend on the domain $\nT^3$, the norms of ${\tilde \omega}$ and ${\tilde{\theta}}$ as well as the time $T$ and are specified in \eqref{C_tilde_simplified_3D_HM} and \eqref{K_tilde_simplified_3D_HM}, respectively, while $C$ is an absolute dimensionless positive constant.
\end{theorem}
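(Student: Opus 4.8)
The plan is to exploit the fact that, since $\tilde\bu = K\convh\tilde\omega$ is prescribed, the two equations in \eqref{linear_simplified_3D_HM_evol} decouple into a pair of \emph{independent} linear transport equations for $\theta$ and for $\eta$, each driven by the three-dimensional velocity field $(\tilde\bu, \mp U_0)$. This field is divergence-free in $\nT^3$ (because $\nabla_h\cdot\tilde\bu = 0$ and $U_0$ is constant); moreover, since $\tilde\bu = K\convh\tilde\omega$ and $\partial_z$ commutes with the horizontal convolution $\convh$, the field $\tilde\bu$ inherits the $z$-regularity of $\tilde\omega$ and gains regularity in the horizontal variables, so that $\tilde\bu$ is $C^1$ in $t$ and $C^2$ in $\bx$. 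First I would solve each equation by the method of characteristics: the associated characteristic ODE has a $C^1$-in-time, $C^2$-in-space right-hand side, so Cauchy--Lipschitz theory provides a unique $C^2$ volume-preserving flow map, and $\theta(t,\bx) = \theta^0(\Phi^{0,t}(\bx))$ (and likewise for $\eta$) is the unique solution; the regularity $(\theta,\eta)\in C^2([0,T]; C^2_{per}(\nT^3)\times C^2_{per}(\nT^3))$ follows from smoothness of the flow in the initial data together with a bootstrap that uses the equation to trade time-derivatives for space-derivatives. Since the flow is measure-preserving and $\theta,\eta$ are constant along characteristics, the conservation \eqref{Linfty_estimate_linear_simplified_HM_theta_eta} of all $\nL^p$ norms is immediate (equivalently, one tests the equation against $p\abs{\theta}^{p-2}\theta$ and uses $\nabla_h\cdot\tilde\bu=0$ and periodicity in $z$), the case $p=\infty$ following from the characteristic representation.

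For the gradient estimate \eqref{W14_estimate_linear_simplified_HM_theta_eta} I would differentiate the $\theta$-equation in the horizontal variables, so that $\nabla_h\theta$ solves the same transport equation with source $-(\nabla_h\theta)\cdot(\nabla_h\tilde\bu)$. Setting $g(t,z) := \norm{\nabla_h\theta(t,z)}_{\LphP{4}}^4$ and testing against $\abs{\nabla_h\theta}^2\nabla_h\theta$ over $\nT^2$ at fixed $z$, the horizontal convective term drops out by incompressibility, while the vertical term $-U_0\partial_z(\nabla_h\theta)$ contributes exactly $U_0\partial_z g$, and the source is bounded by $4\norm{\nabla_h\tilde\bu}_{\LpP{\infty}}\,g$. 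This yields the differential inequality $\partial_t g - U_0\partial_z g \le 4\norm{\nabla_h\tilde\bu(t)}_{\LpP{\infty}}\,g$, which I would integrate along the characteristics $\dot z = -U_0$ of the $(t,z)$ transport operator; Gronwall then gives $\sup_z g(t,z) \le \sup_z g(0,z)\,\exp(4\int_0^t\norm{\nabla_h\tilde\bu}_{\LpP{\infty}})$, and taking fourth roots produces \eqref{W14_estimate_linear_simplified_HM_theta_eta} with $\tilde J(T)$ controlling $\sup_t\norm{\nabla_h\tilde\bu(t)}_{\LpP{\infty}}$ (itself estimated through Proposition \ref{nabla_h_u_Linfty_estimate_simplified_3D_HM}). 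I expect this step to be the main obstacle: unlike the horizontal convection, the vertical transport does \emph{not} vanish in the $\LphP{4}$ energy identity at a fixed height, and it is precisely the passage to characteristics in the $(t,z)$-plane that is needed to close the estimate in the $\LzLphP{\infty}{4}$ norm.

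The remaining bounds follow by energy methods feeding on the previous steps. For \eqref{L2_estimate_dztheta_dzeta_linear_simplified_HM} I would differentiate the equation in $z$ and test with $\partial_z\theta$ in $\LpP{2}$; both transport terms vanish (incompressibility and $z$-periodicity), leaving $\tfrac12\frac{d}{dt}\norm{\partial_z\theta}_{\LpP{2}}^2 = -\langle(\partial_z\tilde\bu\cdot\nabla_h)\theta,\partial_z\theta\rangle$, which I bound via Corollary \ref{nonlinearity_estimate_simplified_3D_HM} (with $\omega = \partial_z\tilde\omega$, noting $\partial_z\tilde\bu = K\convh\partial_z\tilde\omega$) by $CL^{1/2}\norm{\partial_z\tilde\omega}_{\LpP{2}}\norm{\nabla_h\theta}_{\LzLphP{\infty}{4}}\norm{\partial_z\theta}_{\LpP{2}}$; dividing by $\norm{\partial_z\theta}_{\LpP{2}}$, inserting \eqref{W14_estimate_linear_simplified_HM_theta_eta}, and integrating in time gives the claim, with $\tilde H(T)$ absorbing $CL^{1/2}e^{\tilde J(T)T}\norm{(\nabla_h\theta^0,\nabla_h\eta^0)}_{\nLzLphP{\infty}{4}}\sup_t\norm{\partial_z\tilde\omega}_{\LpP{2}}$. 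For \eqref{L2_estimate_dttheta_dteta_linear_simplified_HM} I would read $\partial_t\theta = -(\tilde\bu\cdot\nabla_h)\theta + U_0\partial_z\theta$ directly off the equation, estimating the first term by \eqref{L2_estimate_nonlinearity_simplified_HM} of Proposition \ref{nonlinearity_L2_estimate_simplified_3D_HM} and the second by \eqref{L2_estimate_dztheta_dzeta_linear_simplified_HM}. Finally, \eqref{Linfty_estimate_u_linear_simplified_HM} is obtained by applying the logarithmic inequality \eqref{logarithmic_inequality_simplified_3D_HM} to $\bu = \tfrac{1}{2L}K\convh(\theta-\eta)$ and substituting the conserved norms from \eqref{Linfty_estimate_linear_simplified_HM_theta_eta} together with the gradient bound \eqref{W14_estimate_linear_simplified_HM_theta_eta}. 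Throughout, $\theta$ and $\eta$ are treated identically, since the sign of $U_0$ is irrelevant to every energy identity above.
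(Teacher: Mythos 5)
Your proposal is correct and follows essentially the same route as the paper: characteristics for existence, uniqueness and the $\nL^p$ conservation (with $p\to\infty$ via the flow representation), the $\LphP{4}$ energy identity for $\nabla_h\theta$ closed by transport along $\dot z=-U_0$ plus Gronwall with $\tilde J(T)$ from Proposition \ref{nabla_h_u_Linfty_estimate_simplified_3D_HM}, the $z$-derivative bound via Corollary \ref{nonlinearity_estimate_simplified_3D_HM} applied to $\partial_z\tilde\bu=K\convh\partial_z\tilde\omega$, reading $\partial_t\theta$ off the equation with Proposition \ref{nonlinearity_L2_estimate_simplified_3D_HM}, and the logarithmic inequality \eqref{logarithmic_inequality_simplified_3D_HM} for $\nabla_h\bu$. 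Your only deviations are cosmetic: you obtain globality directly from the flow map of the given bounded field rather than the paper's extension of a local solution via the uniform $\nLpP{p}$ bounds, and you correctly isolate the vertical transport term $U_0\partial_z g$ in the $(t,z)$-plane exactly as the paper does.
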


\begin{proof}
If $(\theta^0,\eta^0)={\bf 0}$, then the unique solution of \eqref{linear_simplified_3D_HM} is $(\theta(t),\eta(t)) = {\bf 0}$, and there is nothing to prove. Therefore, we assume that $(\theta^0,\eta^0)\not={\bf 0}$. Since $ {\tilde{\omega}}\in C^1([0,T];C^2_{per}(\nT^3))$ then one can easily show by classical elliptic theory that ${\tilde{\bu}}$ $\in$ $C^1([0,T];C^2_{per}(\nT^3))$. By the method of characteristics (\cite{Evans_1998}, Chapter 3, \cite{John_1986}, Chapter 1) we can ensure the existence and the uniqueness of a classical local solution $(\theta, \eta) \in C^2([0,T_{max}];C^2_{per}(\nT^3)$ $\times C^2_{per}(\nT^3))$ of \eqref{linear_simplified_3D_HM} for some short time $0<T_{max}\leq T$. Since $({\tilde{\bu}}, -1)$ is a divergence free vector field in $\nR^3$, multiplying the evolution equations in \eqref{linear_simplified_3D_HM_evol} by $\abs{\theta}^{p-2}\theta$ and $\abs{\eta}^{p-2}\eta$, respectively, and integrate over $\nT^3$ yield that
\begin{align}\label{L2_estimate_linear_simplified_HM_theta_eta_max}
\norm{\theta(t)}_{\LpP{p}} = \norm{\theta^0}_{\LpP{p}} \quad \text{and} \quad \norm{\eta(t)}_{\LpP{p}} = \norm{\eta^0}_{\LpP{p}},
\end{align}
for any $p\in[1,\infty)$ and every $t\in[0,T_{max}]$. Thus,
\begin{subequations}\label{Linfty_estimate_linear_simplified_HM_theta_eta_max}
\begin{align}
\norm{\theta(t)}_{\LpP{\infty}} = \lim_{p\rightarrow\infty}\norm{\theta}_{\LpP{p}} = \lim_{p\rightarrow\infty}\norm{\theta^0}_{\LpP{p}} &= \norm{\theta^0}_{\LpP{\infty}}, \\
\norm{\eta(t)}_{\LpP{\infty}} = \lim_{p\rightarrow\infty}\norm{\eta}_{\LpP{p}} = \lim_{p\rightarrow\infty}\norm{\eta^0}_{\LpP{p}} &= \norm{\eta^0}_{\LpP{\infty}},
\end{align}
\end{subequations}
for every $t\in[0,T_{max}]$. The uniform estimates \eqref{L2_estimate_linear_simplified_HM_theta_eta_max} and \eqref{Linfty_estimate_linear_simplified_HM_theta_eta_max} allow us to deduce that the solution $(\theta,\eta)$ can be extended on each interval $[nT_{max}, (n+1)T_{max}]\cap[0,T]$. So, for every arbitrary $T>0$, we have found a unique classical solution $(\theta,\eta) \in C^2([0,T];C^2_{per}(\nT^3)$ $\times C^2_{per}(\nT^3))$ of \eqref{linear_simplified_3D_HM} that satisfies the estimate \eqref{Linfty_estimate_linear_simplified_HM_theta_eta}. 
Since the solution $(\theta(t),\eta(t))\in C^2_{per}(\nT^3) \times C^2_{per}(\nT^3)$, we can take $\nabla_h$ of \eqref{linear_simplified_3D_HM_evol} and multiply by $|\nabla_h\theta|^2\nabla_h\theta$ and by $|\nabla_h\eta|^2\nabla_h\eta$, respectively, and then integrate horizontally to obtain:
\begin{align*}
&\frac{1}{4} \pd{{\norm{\nabla_h\theta}_{\LphP{4}}^4}}{t} - \frac{U_0}{4} \pd{{\norm{\nabla_h\theta}_{\LphP{4}}^4}}{z} \\
&\qquad \qquad + \int_0^L\int_0^L\left((\partial_x{\tilde{\bu}}\cdot\nabla_h)\theta\cdot \partial_x\theta |\nabla_h\theta|^2+ (\partial_y{\tilde{\bu}}\cdot\nabla_h)\theta\cdot \partial_y\theta |\nabla_h\theta|^2\right) \,dx\, dy = 0,
\end{align*}
which yields:
\begin{align*}
\frac{1}{4} \pd{{\norm{\nabla_h\theta(t;z)}_{\LphP{4}}^4}}{t} - \frac{U_0}{4} \pd{{\norm{\nabla_h\theta(t;z)}_{\LphP{4}}^4}}{z} \leq \norm{\nabla_h {\tilde{\bu}}(t;z)}_{\LphP{\infty}} \norm{\nabla_h\theta(t;z)}_{\LphP{4}}^4.
\end{align*}
Notice that since the solution here is classical, all these estimates are rigorous and there is no need to justify them further. Using Proposition \ref{nabla_h_u_Linfty_estimate_simplified_3D_HM} together with the above inequality we obtain:
\begin{align}\label{simplifed_3d_HM_nabla_h_theta_pd}
\pd{{\norm{\nabla_h\theta(t;z)}_{\LphP{4}}^4}}{t} - U_0\pd{{\norm{\nabla_h\theta(t;z)}_{\LphP{4}}^4}}{z} \leq 4{\tilde J}(T) \norm{\nabla_h\theta(t;z)}_{\Lph{4}}^4,
\end{align}
where
\begin{align}\label{C_tilde_simplified_3D_HM}
{\tilde J}(T):=  C\sup_{t \in [0,T]} \norm{{\tilde \omega(t)}}_{\LpP{\infty}}\log\left(e^2+ CL^2\sup_{t \in [0,T]} \frac{\norm{\nabla_h{\tilde \omega(t)}}_{\LzLphP{\infty}{4}}}{\norm{{\tilde \omega(t)}}_{\LpP{2}}}\right).
\end{align}
Applying the method of characteristics and then Gronwall's Lemma to \eqref{simplifed_3d_HM_nabla_h_theta_pd}, one can show that,
\begin{align}
\label{W14_estimate_linear_simplified_HM}
\norm{\nabla_h\theta(t;z)}_{\LphP{4}}^4 &\leq e^{4{\tilde J}(T)t}\norm{\nabla_h\theta^0(z+U_0t)}_{\LphP{4}}^4 \notag \\
&\leq e^{4{\tilde J}(T)T}\norm{\nabla_h\theta^0}_{\LzLphP{\infty}{4}}^4,
\end{align}
for every $t\in[0,T]$, and every $z\in[0,L]$. Similar inequality holds also for $\eta$. This proves estimate \eqref{W14_estimate_linear_simplified_HM_theta_eta}. Since $\theta\in C^2([0,T];C^2_{per}(\nT^3))$, we can differentiate \eqref{linear_simplified_3D_HM_evol} with respect to $z$ and get that $\partial_z\theta$ satisfies
\begin{align}
\pd{(\partial_z \theta)}{t} + (\partial_z {\tilde{\bu}} \cdot \nabla_h)\theta + {\tilde{\bu}} \cdot \nabla_h \partial_z \theta - U_0\npd{\theta}{z}{2} = 0;
\end{align}
and a similar equation holds also for $\partial_z\eta$. Since $({\tilde{\bu}}, -1)$ is a divergence free vector in $\nR^3$, taking the $\LpP{2}$ inner product of the above equation with $\partial_z \theta$ implies that,
\begin{align*}
\frac{1}{2} \od{\norm{\partial_z\theta}_{\LpP{2}}^2}{t} + \left<(\partial_z {\tilde{\bu}}\cdot \nabla_h) \theta, \partial_z \theta \right> = 0.
\end{align*}
Therefore, by Corollary \ref{nonlinearity_estimate_simplified_3D_HM}, we have
\begin{align*}
&\frac{1}{2} \od{\norm{\partial_z\theta(t)}_{\LpP{2}}^2}{t} \leq C L^{1/2} \norm{\nabla_h\theta(t)}_{\LzLphP{\infty}{4}} \norm{\partial_z {\tilde\omega}(t)}_{\LpP{2}}\norm{\partial_z \theta(t)}_{\LpP{2}}\\
&\qquad \qquad \leq  C L^{1/2}e^{{\tilde J}(T)T} \norm{\nabla_h\theta^0}_{\LzLphP{\infty}{4}} \left(\sup_{t\in[0,T]}\norm{\partial_z {\tilde\omega}(t)}_{\LpP{2}}\right)\norm{\partial_z \theta(t)}_{\LpP{2}},
\end{align*}
where in the last inequality we used \eqref{W14_estimate_linear_simplified_HM}. Similar inequality holds also for $\partial_z\eta$.  Set
\begin{align}\label{K_tilde_simplified_3D_HM}
{\tilde H}(T) := 4C L^{1/2} e^{{\tilde J}(T)T}\norm{(\nabla_h\theta^0,\nabla_h\eta^0)}_{\nLzLphP{\infty}{4}}\sup_{t\in[0,T]}\norm{\partial_z {\tilde\omega}(t)}_{\LpP{2}}.
\end{align}
Thus,
\begin{align}
\od{\norm{(\partial_z\theta,\partial_z\eta)}_{\nLpP{2}}}{t} \leq {\tilde H}(T),
\end{align}
which implies
\begin{align}
\norm{(\partial_z\theta(t),\partial_z\eta(t))}_{\LpP{2}}\leq \norm{(\partial_z\theta^0,\partial_z\eta^0)}_{\LpP{2}} + {\tilde H}(T)T,
\end{align}
for every $t\in[0,T]$. This proves estimate \eqref{L2_estimate_dztheta_dzeta_linear_simplified_HM}. Finally, by Proposition \ref{nonlinearity_L2_estimate_simplified_3D_HM} and \eqref{W14_estimate_linear_simplified_HM} we get
\begin{align}\label{L2_estimate_nonlinearity_linear_simplified_HM}
\norm{({\tilde\bu}\cdot\nabla_h)\theta(t)}_{\LpP{2}} &\leq C L^{2}e^{{\tilde J}(T)T}\norm{\nabla_h\theta^0}_{\LzLphP{\infty}{4}}\sup_{t\in[0,T]}\norm{{\tilde \omega}(t)}_{\LpP{\infty}},
\end{align}
for every $t\in[0,T]$. Similar inequality holds also for $\norm{(\bu\cdot\nabla_h)\eta}_{\LpP{2}}$.

Next, we observe that since \eqref{linear_simplified_3D_HM_evol} is equivalent to:
\begin{align}
\pd{\theta}{t} = -({\tilde{\bu}}\cdot\nabla_h)\theta +U_0 \pd{\theta}{z},\qquad \pd{\eta}{t} = -({\tilde{\bu}}\cdot\nabla_h)\eta - U_0\pd{\eta}{z},
\end{align}
then by \eqref{L2_estimate_dztheta_dzeta_linear_simplified_HM} and \eqref{L2_estimate_nonlinearity_linear_simplified_HM}, we obtain estimate \eqref{L2_estimate_dttheta_dteta_linear_simplified_HM}.

Set $\bu = K\convh\omega$, where $\omega = \frac{1}{2L}(\theta-\eta)$. Hence, the logarithmic inequality \eqref{logarithmic_inequality_simplified_3D_HM} implies:
\begin{align*}
\norm{\nabla_h \bu(t)}_{\LpP{\infty}}&\leq \frac{C}{L} \norm{(\theta(t),\eta(t))}_{\nLpP{\infty}}\log\left(e^2+ CL^2\frac{\norm{(\nabla_h\theta(t),\nabla_h\eta(t))}_{\nLzLphP{\infty}{4}}}{\norm{(\theta(t),\eta(t))}_{\nLpP2}}\right),
\end{align*}
for every $t\in[0,T]$. Thanks to \eqref{Linfty_estimate_linear_simplified_HM_theta_eta}, $\norm{(\theta(t),\eta(t))}_{\nLpP{2}} = \norm{(\theta^0,\eta^0)}_{\nLpP{2}}$ and $\norm{(\theta(t),\eta(t))}_{\nLpP{\infty}} =\norm{(\theta^0,\eta^0)}_{\nLpP{\infty}}$, for every $t \in [0,T]$, as a result \eqref{Linfty_estimate_linear_simplified_HM_theta_eta} and \eqref{W14_estimate_linear_simplified_HM_theta_eta} imply \eqref{Linfty_estimate_u_linear_simplified_HM}; and this completes the proof.
\end{proof}
 \bigskip
\section{Global Existence and Uniqueness}
In this section, we aim to prove global existence and uniqueness of solutions to system \eqref{simplified_3d_HM_theta_eta}, subject to periodic boundary conditions with a basic domain $\nT^3 = [0,L]^3 \subset \nR^3$, over any fixed arbitrary time interval $[0,T]$. We recall in more details system \eqref{simplified_3d_HM_theta_eta}
\begin{subequations}
\label{simplified_3D_HM}
\begin{align}
\pd{\theta}{t} + (\bu\cdot\nabla_h)\theta - U_0\pd{\theta}{z} =0, \label{simplified_3D_HM_evol}\quad \pd{\eta}{t} +(\bu\cdot\nabla_h)\eta +U_0\pd{\eta}{z} =0, \\
\bu = K\convh\omega, \quad \omega = \frac{1}{2L}(\theta-\eta),\\
\theta(0;\bx) = \theta^0(\bx),\quad \eta(0;\bx) = \eta^0(\bx).
\end{align}
\end{subequations}

Next, we give a definition of a solution to system \eqref{simplified_3D_HM} or equivalently \eqref{simplified_3d_HM_theta_eta}.
\begin{definition}
\label{weak_solution_definition_simplified_3D_HM}
Let $T>0$, and $(\theta^0,\eta^0) \in \nLpP{\infty}$ with $(\nabla_h\theta^0,\nabla_h\eta^0)\in \nLzLphP{\infty}{4}$, $(\partial_z\theta^0,\partial_z\eta^0)\in \nLpP{2}$ be given. We say that $(\theta,\eta)$ is a weak solution of $\eqref{simplified_3D_HM}$, if
$$(\theta,\eta) \in C([0,T];\nLpP{2}), \quad (\nabla_h\theta,\nabla_h\eta) \in L^\infty([0,T]; \nLzLphP{\infty}{4}),$$
$$(\partial_z\theta,\partial_z\eta) \in L^\infty([0,T]; \nLpP{2}), \quad  (\partial_t \theta,\partial_t\eta) \in L^\infty([0,T]; \nLpP{2}),$$
$$\theta(0;\bx) = \theta^0(\bx) \quad \text{and} \quad \eta(0;\bx) = \eta^0(\bx),$$
such that $(\theta,\eta)$ satisfy \eqref{simplified_3D_HM} in the weak formulation (i.e., a solution in the distribution sense). That is, for any scalar test functions $\vphi(t;\bx), \psi(t;\bx)  \in C^\infty_{per}([0,T]; \nT^3)$, such that $\vphi(T;\bx) =0$, $\psi(T;\bx)=0$,
\begin{align*}
-\int_0^T\left<\theta(s),\pd{\vphi(s)}{s}\right>\,ds - \int_0^T\left<\bu\theta,\nabla_h\vphi\right>\,ds+ U_0\int_0^T\left<\theta(s), \pd{\vphi(s)}{z}\right>\,ds
&=\left<\theta^0(x),\vphi(\bx,0)\right>,\\
-\int_0^T\left<\eta(s),\pd{\psi(s)}{s}\right>\,ds - \int_0^T\left<\bu\eta,\nabla_h\psi\right>\,ds-U_0 \int_0^T\left<\eta(s), \pd{\psi(s)}{z}\right>\,ds
&=\left<\eta^0(x),\psi(\bx,0)\right>.
\end{align*}
where $\bu = K\convh \omega = \frac{1}{2L}K\convh(\theta-\eta)$.
\end{definition}
In the case of periodic boundary conditions, it will be sufficient to consider only test functions of the form
\begin{subequations}\label{test_fcns}
\begin{align}
\vphi(t;\bx) = \chi_{\bm}(t) e^{2\pi i\bm\cdot \bx} ,  \qquad \psi(t;\bx) = \gamma_{\bk}(t) e^{2\pi i\bk\cdot\bx},
\end{align}
\begin{align}
\chi_{\bm}, \gamma_{\bk} \in C^\infty([0,T]), \qquad \chi_{\bm}(T)=\gamma_{\bk}(T)=0,
\end{align}
\end{subequations}
for $\bm, \bk\in \nZ^2$, since such functions form a basis for the corresponding larger spaces of test functions.
Thus, $(\theta,\eta)$ is a solution in the distribution sense of \eqref{simplified_3D_HM} if
\begin{align}
&-\int_0^T\left<\theta(s),e^{2\pi i\bm\cdot \bx}\right>\chi^{'}_{\bm}(s)\,ds - \int_0^T\left<\bu\theta,\nabla_h e^{2\pi i\bm\cdot \bx}\right>\chi_{\bm}(s)\,ds + \notag\\
&\qquad \qquad \qquad U_0 \int_0^T\left<\theta(s), \pd{e^{2\pi i\bm\cdot \bx}}{z}\right>\chi_{\bm}(s)\,ds =\left<\theta^0(x),e^{2\pi i\bm\cdot \bx}\right>\chi_{\bm}(0), \\
&-\int_0^T\left<\eta(s),e^{2\pi i\bk\cdot \bx}\right>\gamma^{'}_{\bk}(s)\,ds - \int_0^T\left<\bu\eta,\nabla_h e^{2\pi i\bk\cdot \bx}\right>\gamma_{\bk}(s)\,ds - \notag\\
&\qquad \qquad \qquad U_0 \int_0^T\left<\eta(s), \pd{e^{2\pi i\bk\cdot \bx}}{z}\right>\gamma_{\bk}(s)\,ds =\left<\eta^0(x),e^{2\pi i\bk\cdot \bx}\right>\gamma_{\bk}(0),
\end{align}
for all $\bm, \bk \in\nZ^2$.
We notice that if $(\theta,\eta)$ is a weak solution, then by Proposition \ref{nonlinearity_L2_estimate_simplified_3D_HM}, $(\bu\cdot\nabla_h)\theta, (\bu\cdot\nabla_h)\eta \in L^\infty([0,T],\LpP{2})$. Consequently, we can apply Lemma 1.1 (\cite{Temam_2001_Th_Num}, page 169) with $X = \LpP{2}$ to conclude that:
\begin{align}
\theta &= \xi_1 + \int_0^t \left(\nabla_h\cdot(\bu\theta)(s) + U_0\pd{\theta}{z}(s)\right)\, ds, \\
\eta &= \xi_2 + \int_0^t \left(\nabla_h\cdot(\bu\eta)(s) - U_0\pd{\eta}{z}(s)\right)\, ds,
\end{align}
for some $\xi_1, \xi_2 \in \LpP{2}$, a.e. $t \in [0,T]$, and,
\begin{align}
\pd{\theta}{t}(t) &= -\nabla_h\cdot(\bu\theta)(t)+ U_0\pd{\theta}{z}(t), \\
\pd{\eta}{t}(t) &= -\nabla_h\cdot(\bu\eta)(t) - U_0\pd{\eta}{z}(t),
\end{align}
in $\LpP{2}$, a.e. $t\in[0,T]$. If $(\theta,\eta)$ is a solution of \eqref{simplified_3D_HM}, and equivalently \eqref{simplified_3d_HM_theta_eta}, according to Definition \ref{weak_solution_definition_simplified_3D_HM}, then $(\partial_z\theta,\partial_z\eta) \in L^\infty([0,T];\nLpP{2})$, $(\partial_t\theta,\partial_t\eta) \in L^\infty([0,T];\nLpP{2})$, and by Proposition \ref{nonlinearity_L2_estimate_simplified_3D_HM}, $(\bu\cdot\nabla_h)\theta, (\bu\cdot\nabla_h)\eta \in L^\infty([0,T],\LpP{2})$, then we have
\begin{align}
\pd{\theta}{t} &= -\nabla_h\cdot(\bu\theta)+ U_0\pd{\theta}{z}, \qquad \text{in} \,\, L^\infty([0,T];\LpP{2}), \\
\pd{\eta}{t} &= -\nabla_h\cdot(\bu\eta) - U_0\pd{\eta}{z}, \qquad \text{in} \,\, L^\infty([0,T];\LpP{2}).
\end{align}
That is, for any $\psi_1,\psi_2 \in L^1([0,T];\LpP{2})$,
\begin{align*}
\int_0^T\left<\pd{\theta}{t}(t),\psi_1(t)\right>\,dt &= \int_0^T\left<-\nabla_h\cdot(\bu\theta)(t), \psi_1(t)\right>\,dt  +U_0 \int_0^T\left<\pd{\theta}{z}(t), \psi_1(t)\right>\,dt,\\
\int_0^T\left<\pd{\eta}{t}(t),\psi_2(t)\right> \,dt &= \int_0^T\left<-\nabla_h\cdot(\bu\eta)(t), \psi_2(t)\right>\,dt - U_0\int_0^T\left<\pd{\eta}{z}(t), \psi_2(t)\right>\,dt .
\end{align*}
\begin{theorem}
\label{global_existence_simplified_HM_classical}
Let $(\theta^0, \eta^0) \in C^2_{per}(\nT^3) \times C^2_{per}(\nT^3)$, and $T>0$ be given. Then there exists a unique weak solution $({{\theta}},{{\eta}})$ of \eqref{simplified_3D_HM} in the sense of Definition \ref{weak_solution_definition_simplified_3D_HM} such that:
\begin{align}\label{estimate_a_simplified_3D_HM}
\sup_{t\in[0,T]}\norm{({ \theta}(t), { \eta}(t))}_{\nLpP{p}} = \norm{(\theta^0, \eta^0)}_{\nLpP{p}},
\end{align}
for any $p\in[1,\infty]$,
\begin{align}
\sup_{t\in[0,T]}\norm{(\nabla_h{\theta}(t),\nabla_h{\eta}(t))}_{\nLzLphP{\infty}{4}} \leq {J_0(T)}\norm{(\nabla_h\theta^0,\nabla_h\eta^0)}_{\nLzLphP{\infty}{4}},
\end{align}
\begin{align}
\sup_{t\in[0,T]}\norm{(\partial_z{ \theta}(t), \partial_z{ \eta}(t))}_{\nLpP{2}} \leq {H_0(T)}\norm{(\partial_z\theta^0,\partial_z\eta^0)}_{\nLpP{2}},
\end{align}
and
\begin{align}\label{estimate_b_simplified_3D_HM}
&\sup_{t\in[0,T]}\norm{(\partial_t{ \theta}(t),\partial_t{ \eta}(t))}_{\nLpP{2}} \leq H_0(T)\norm{(\partial_z\theta^0,\partial_z\eta^0)}_{\nLpP{2}} \notag \\
&\qquad \qquad +  CLJ_0(T)\norm{(\theta^0,\eta^0)}_{\nLpP{\infty}}\norm{(\nabla_h\theta^0,\nabla_h\eta^0)}_{\nLzLphP{\infty}{4}}.
\end{align}
Furthermore,
\begin{align}
&\sup_{t\in[0,T]}\norm{\nabla_h { \bu}(t)}_{\LpP{\infty}}\leq \notag \\
& \frac{C}{L}\norm{(\theta^0,\eta^0)}_{\nLpP{\infty}}\log \left(e^2+CL^2J_0(T)\frac{\norm{(\nabla_h\theta^0,\nabla_h\eta^0)}_{\nLzLphP{\infty}{4}}}{\norm{(\theta^0,\eta^0)}_{\nLpP{2}}}\right).\qquad
\end{align}
Here ${{\bu}} = \frac{1}{2}K\convh(\theta-\eta)$. $C$ is a positive dimensionless constant, $C_0, J_0(T)$ and $H_0(T)$ are constants that depend on the domain $\nT^3$ as well as the norms of the initial data $(\theta^0,\eta^0)$ specified in \eqref{C_0_simplified_3D_HM}, \eqref{J_0_simplified_3D_HM} and \eqref{H_0_simplified_3D_HM}, respectively.
\end{theorem}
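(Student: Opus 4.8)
The plan is to construct the solution by a Picard-type iteration built entirely on the linear theory of Theorem \ref{global_existence_uniqueness_linear_simplified_HM}. Starting from the smooth state $\omega^{(0)}(t;\bx) \equiv \omega^0 = \frac{1}{2L}(\theta^0 - \eta^0) \in C^2_{per}(\nT^3)$, with $(\theta^{(0)},\eta^{(0)}) \equiv (\theta^0,\eta^0)$, I would define inductively $(\theta^{(n+1)}, \eta^{(n+1)})$ as the unique solution of the linearized system \eqref{linear_simplified_3D_HM} with drift $\tilde\omega = \omega^{(n)}$, and set $\omega^{(n+1)} = \frac{1}{2L}(\theta^{(n+1)} - \eta^{(n+1)})$. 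Theorem \ref{global_existence_uniqueness_linear_simplified_HM} guarantees that each iterate lies in $C^2([0,T]; C^2_{per}(\nT^3) \times C^2_{per}(\nT^3))$, so the scheme is well-defined and the feedback $\omega^{(n+1)} \in C^1([0,T]; C^2_{per}(\nT^3))$ is an admissible drift for the next step.

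The heart of the argument is to produce a priori bounds that are uniform in $n$. The conservation law \eqref{Linfty_estimate_linear_simplified_HM_theta_eta} immediately gives $\norm{(\theta^{(n)}(t),\eta^{(n)}(t))}_{\nLpP{p}} = \norm{(\theta^0,\eta^0)}_{\nLpP{p}}$ for every $n$, $t$, and $p\in[1,\infty]$, so in particular $\norm{\omega^{(n)}}_{\LpP{\infty}}$ and $\norm{\omega^{(n)}}_{\LpP{2}}$ are controlled uniformly; denote the resulting fixed bounds by $M$ and $\nu$ (assuming the non-degenerate case $\omega^0 \neq 0$, the degenerate case being trivial transport). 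The delicate quantity is $Y_n(t) := \norm{(\nabla_h\theta^{(n)}(t),\nabla_h\eta^{(n)}(t))}_{\nLzLphP{\infty}{4}}$. Rather than iterating the crude bound \eqref{W14_estimate_linear_simplified_HM_theta_eta}, I would work with the instantaneous differential inequality underlying \eqref{simplifed_3d_HM_nabla_h_theta_pd}, which along characteristics yields $Y_{n+1}(t) \leq Y^0 \exp(\int_0^t \tilde J^{(n)}(s)\,ds)$ with $\tilde J^{(n)}(s) = CM\log(e^2 + CL^2 Y_n(s)/\nu)$. Here the Yudovich-type mechanism enters: letting $\Phi$ solve the nonlinear Gronwall problem $\Phi' = CM\log(e^2 + CL^2\Phi/\nu)\,\Phi$, $\Phi(0) = Y^0$, whose solution remains finite on $[0,T]$ (growing only double-exponentially), the logarithmic structure gives $\tilde J^{(n)}(s) \leq (\log\Phi)'(s)$ whenever $Y_n \leq \Phi$, and hence $Y_{n+1} \leq \Phi$. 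Since $Y_0 \equiv Y^0 \leq \Phi$, induction yields $Y_n(t) \leq \Phi(t) =: J_0(T)Y^0$ for all $n$. I expect this barrier step to be the main obstacle, precisely because the logarithmic nonlinearity would otherwise force the naive recursion to diverge for large $T$; it is the improved logarithmic inequality of Section 2, through Proposition \ref{nabla_h_u_Linfty_estimate_simplified_3D_HM} and its corollary, that makes $\tilde J$ grow only logarithmically in $Y_n$ and thereby allows the barrier to close.

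With $Y_n \leq J_0(T)Y^0$ in hand, the remaining bounds close as linear recursions. Feeding the uniform $Y_n$-bound into the instantaneous form of \eqref{L2_estimate_dztheta_dzeta_linear_simplified_HM} via Corollary \ref{nonlinearity_estimate_simplified_3D_HM} gives $\od{}{t}\norm{(\partial_z\theta^{(n+1)},\partial_z\eta^{(n+1)})}_{\nLpP{2}} \leq \kappa\,\norm{(\partial_z\theta^{(n)},\partial_z\eta^{(n)})}_{\nLpP{2}}$ with $\kappa$ independent of $n$; since all iterates share the datum $\norm{(\partial_z\theta^0,\partial_z\eta^0)}_{\nLpP{2}}$, this Picard recursion is dominated by $\norm{(\partial_z\theta^0,\partial_z\eta^0)}_{\nLpP{2}}\,e^{\kappa t}$, defining $H_0(T)$. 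The bound on $(\partial_t\theta^{(n)},\partial_t\eta^{(n)})$ then follows from the equation itself together with Proposition \ref{nonlinearity_L2_estimate_simplified_3D_HM}, and the bound on $\nabla_h\bu^{(n)}$ from the logarithmic inequality \eqref{logarithmic_inequality_simplified_3D_HM}, all uniformly in $n$.

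Finally, for convergence I would estimate the differences $\delta\theta^{(n)} = \theta^{(n+1)} - \theta^{(n)}$, $\delta\eta^{(n)} = \eta^{(n+1)} - \eta^{(n)}$, which solve the transport system driven by $\bu^{(n)}$ with source $-(\delta\bu^{(n-1)}\cdot\nabla_h)\theta^{(n)}$, where $\delta\bu^{(n-1)} = \frac{1}{2L}K\convh(\delta\theta^{(n-1)}-\delta\eta^{(n-1)})$. Because the advection is by a divergence-free velocity field in $\nR^3$, the $\LpP{2}$ energy estimate annihilates the drift term, and Corollary \ref{nonlinearity_estimate_simplified_3D_HM} together with the uniform bound on $\nabla_h\theta^{(n)}$ gives $\od{D_n}{t} \leq \Lambda D_{n-1}$ for $D_n(t) = \norm{(\delta\theta^{(n)},\delta\eta^{(n)})}_{\nLpP{2}}(t)$, with $D_n(0)=0$ since all iterates share initial data. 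This forces $D_n(t) \leq (\sup_{[0,T]}D_0)\,(\Lambda t)^n/n!$, so $\{(\theta^{(n)},\eta^{(n)})\}$ is Cauchy, hence convergent, in $C([0,T];\nLpP{2})$. Strong $\nLpP{2}$ convergence identifies $\bu^{(n)}\to\bu$ and the nonlinear terms, the uniform bounds pass to the limit by weak-$*$ lower semicontinuity to give \eqref{estimate_a_simplified_3D_HM}--\eqref{estimate_b_simplified_3D_HM}, and one verifies the limit satisfies the weak formulation of Definition \ref{weak_solution_definition_simplified_3D_HM}. Uniqueness follows by running the same $\LpP{2}$ difference estimate on two solutions sharing initial data, where the recursion becomes the closed Gronwall inequality $\od{}{t}\norm{\delta}_{\nLpP{2}} \leq \Lambda\norm{\delta}_{\nLpP{2}}$ with $\norm{\delta}_{\nLpP{2}}(0)=0$, forcing $\delta\equiv 0$.
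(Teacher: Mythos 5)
Your proposal is correct and follows the same overall architecture as the paper's proof: Picard iteration on the linearized system of Theorem \ref{global_existence_uniqueness_linear_simplified_HM} with feedback $\omega^{n}=\frac{1}{2L}(\theta^{n}-\eta^{n})$, uniform-in-$n$ bounds driven by the conservation laws, the factorial difference estimate $D_n(t)\leq (\sup D_0)\,(\Lambda t)^n/n!$ giving Cauchy in $C([0,T];\nLpP{2})$, passage to the limit in the weak formulation, Banach--Alaoglu to transfer the remaining bounds, and uniqueness by the continuous-dependence estimate (the paper's Corollary \ref{cont_dep_initial_data_simplified_HM}); your splitting of the difference equation, with the new velocity on the difference and the difference velocity on $\theta^{(n)}$, is a harmless variant of the paper's. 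The one genuinely different ingredient is at the crucial step, the uniform bound on $Y_n(t)=\norm{(\nabla_h\theta^n(t),\nabla_h\eta^n(t))}_{\nLzLphP{\infty}{4}}$ (Claim \ref{claim_1_simplified_3D_HM}): you close it with an Osgood-type barrier $\Phi'=CM\log(e^2+CL^2\Phi/\nu)\,\Phi$, $\Phi(0)=Y^0$, globally finite with double-exponential growth, propagating $Y_n\leq\Phi$ by induction since $\tilde J^{(n)}\leq(\log\Phi)'$ whenever $Y_n\leq\Phi$; the paper instead runs an explicit induction, using $\log\bigl(e^2e^{g_n(t)t}+X\bigr)\leq g_n(t)t+\log(e^2+X)$ to obtain $\norm{\nabla_h\bu^n(t)}_{\LpP{\infty}}\leq g_{n+1}(t)$ with $g_n(t)=C_0\sum_{j=0}^{n-1}\frac{1}{(j+1)!}\bigl(\frac{C_1}{L}t\norm{(\theta^0,\eta^0)}_{\nLpP{\infty}}\bigr)^{j}$. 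In logarithmic variables these $g_n(t)t$ are exactly the Picard iterates of the linear ODE that majorizes your barrier equation, so the two arguments are equivalent in substance and both land on the same double-exponential constant $J_0(T)$ of \eqref{J_0_simplified_3D_HM}; your packaging is the more conceptual, the paper's yields the explicit constant, and both rest on the improved logarithmic inequality of Section 2 exactly as you say. Three small repairs. First, the non-degeneracy you should assume is $(\theta^0,\eta^0)\neq{\bf 0}$, not $\omega^0\neq 0$: if $\theta^0=\eta^0\not\equiv 0$ the problem is not trivial transport, since already $\omega^{(1)}(t)=\frac{1}{2L}\bigl(\theta^0(\cdot,z+U_0t)-\theta^0(\cdot,z-U_0t)\bigr)\neq 0$ in general; but your estimates never use $\omega^0\neq0$, only $\nu=\norm{(\theta^0,\eta^0)}_{\nLpP{2}}>0$, so the fix is cosmetic. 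Second, \eqref{estimate_a_simplified_3D_HM} is an equality: weak-$\ast$ lower semicontinuity gives only $\leq$, and you also need the attainment $\theta(0)=\theta^0$, $\eta(0)=\eta^0$ (which the paper verifies by comparing the weak formulations against the Fourier test functions) so that the supremum is at least the initial norm. Third, your uniqueness step applies the $\LpP{2}$ energy identity to weak rather than classical solutions; this requires justifying $\frac{1}{2}\od{\norm{\theta(t)}_{\LpP{2}}^2}{t}=\left<\partial_t\theta(t),\theta(t)\right>$, which the paper does via Lemma 1.2 of \cite{Temam_2001_Th_Num}, legitimate here because Definition \ref{weak_solution_definition_simplified_3D_HM} puts $(\partial_t\theta,\partial_t\eta)\in L^\infty([0,T];\nLpP{2})$.
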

\begin{proof}
If $(\theta^0,\eta^0)={\bf 0}$, then $(\theta(t),\eta(t)) = {\bf 0}$ is a solution, and by the uniqueness part that we show later, it will be the only solution, and the statement of the theorem follows. Therefore, we assume that $(\theta^0,\eta^0)\not={\bf 0}$. The idea of the proof is involving the construction of a sequence of approximate solutions of \eqref{simplified_3D_HM} based on the ``linearized" system, as it was introduced in section 3; and then passing to the limit to a weak solution. We proceed with the following steps.

{\bf{Step 1}}: Construction of a sequence of approximate solutions.\\
We consider the sequence $\{(\theta^n,\eta^n)\}_{n=0}^{\infty}$, where, for $n = 1, 2 ,\ldots$, $(\theta^n,\eta^n)$ is the unique solution of the linear system

\begin{subequations}
\label{simplified_3D_HM_n}
\begin{align}
\pd{\theta^n}{t} + (\bu^{n-1}\cdot\nabla_h)\theta - U_0\pd{\theta^n}{z} =0, \quad \pd{\eta^n}{t} + (\bu^{n-1}\cdot\nabla_h)\eta + U_0\pd{\eta^n}{z} =0, \label{simplified_3D_HM_theta_eta_n}\\
\bu^{n-1} = K\convh \omega^ {n-1}, \, \omega^{n-1}= \frac{1}{2L} \left(\theta^{n-1}-\eta^{n-1}\right),\\
\theta^n(0;\bx) = \theta^0(\bx),\, \eta^n(0;\bx) = \eta^0(\bx).
\end{align}
\end{subequations}
Since $(\theta^0,\eta^0) \in C^2_{per}(\nT^3)\times C^2_{per}(\nT^3)$, one can use induction steps to establish the existence and the uniqueness of the sequence  of solutions $(\theta^n,\eta^n) \in C^2([0,T];C^2_{per}(\nT^3) \times C^2_{per}(\nT^3))$ to \eqref{simplified_3D_HM_n} by virtue of Theorem \ref{global_existence_uniqueness_linear_simplified_HM}. In addition, thanks to \eqref{Linfty_estimate_linear_simplified_HM_theta_eta}, one also obtains the following uniform estimate:
\begin{align}
\label{Linfty_estimate_linear_simplified_HM_n_theta_eta}
\norm{(\theta^n(t),\eta^n(t))}_{\nLpP{p}} &= \norm{(\theta^0, \eta^0)}_{\nLpP{p}},
\end{align}
for any $p\in[1,\infty]$, and all $t\in[0,T]$. In turn, this implies that for all $n = 0, 1, 2 ,\ldots$,
\begin{align}
\label{Linfty_omega_simplified_3D_HM_n}
\sup_{t\in[0,T]}\norm{\omega^n(t)}_{\LpP{\infty}} &\leq \frac{1}{L}\norm{(\theta^n(t),\eta^n(t))}_{\nLpP{\infty}} = \frac{1}{L}\norm{(\theta^0, \eta^0)}_{\nLpP{\infty}}.
\end{align}
\begin{claim}\label{claim_1_simplified_3D_HM}
For all $n = 0, 1, 2 ,\ldots$,
\begin{align}\label{W14_estimate_linear_simplified_HM_n_theta_eta}
\sup_{t\in[0,T]}\norm{(\nabla_h\theta^n(t),\nabla_h\eta^n(t))}_{\nLzLphP{\infty}{4}} \leq {J_0(T)}\norm{(\nabla_h\theta^0,\nabla_h\eta^0)}_{\nLzLphP{\infty}{4}},
\end{align}
where $J_0(T)$ is a constant that depends on $L$ and the norms of the initial data $(\theta^0,\eta^0)$, as well as the time $T$ and is specified in \eqref{J_0_simplified_3D_HM}, below.
\end{claim}
{\it{Proof of Claim 4.1:}} By a similar argument as in the proof of Theorem \ref{global_existence_uniqueness_linear_simplified_HM}, we take $\nabla_h$ of \eqref{simplified_3D_HM_theta_eta_n} and then multiply respectively by $|\nabla_h\theta^n|^2\nabla_h\theta^n$ and $|\nabla_h\eta^n|^2\nabla_h\eta^n$ and then integrate horizontally and obtain
\begin{subequations}\label{pd-W14_simplified_linear_3d_HM_theta_eta_n}
\begin{align}
\pd{\norm{\nabla_h\theta^n}_{\LphP{4}}^4}{t} -{U_0} \pd{\norm{\nabla_h\theta^n}_{\LphP{4}}^4}{z} &\leq 4\norm{\nabla_h \bu^{n-1}(z)}_{\LphP{\infty}}\norm{\nabla_h\theta^n}_{\LphP{4}}^4,\\
\pd{\norm{\nabla_h\eta^n}_{\LphP{4}}^4}{t} +{U_0}\pd{\norm{\nabla_h\eta^n}_{\LphP{4}}^4}{z} &\leq 4\norm{\nabla_h \bu^{n-1}(z)}_{\LphP{\infty}}\norm{\nabla_h\eta^n}_{\LphP{4}}^4.
\end{align}
\end{subequations}
By the logarithmic inequality \eqref{logarithmic_inequality_simplified_3D_HM} and the estimate \eqref{Linfty_estimate_linear_simplified_HM_n_theta_eta}, we have:
\begin{align}\label{nabla_h_u_simplified_3D_HM_n}
&\norm{\nabla_h \bu^{n-1}(t)}_{\LpP{\infty}} \leq \notag \\
& \quad \frac{C_1}{L}\norm{(\theta^{0},\eta^{0})}_{\nLpP{\infty}}\log\left(e^2+ C_1L^2\frac{\norm{(\nabla_h\theta^{n-1}(t),\nabla_h\eta^{n-1}(t))}_{\nLzLphP{\infty}{4}}}{\norm{(\theta^{0},\eta^{0})}_{\nLpP{2}}}\right),
\end{align}
for some positive dimensionless constant $C_1$ and all $t\in[0,T]$. This implies that
\begin{align*}
\norm{\nabla_h \bu^{0}}_{\LpP{\infty}} & \leq C_0,
\end{align*}
where
\begin{align}\label{C_0_simplified_3D_HM}
C_0:= \frac{C_1}{L}\norm{(\theta^0,\eta^0)}_{\nLpP{\infty}}\log\left(e^2+ C_1L^2\frac{\norm{(\nabla_h\theta^0,\nabla_h\eta^0)}_{\LzLphP{\infty}{4}}}{\norm{(\theta^0,\eta^0)}_{\nLpP{2}}}\right).
\end{align}
Using the above estimate, and applying the method of characteristics, followed by Gronwall's lemma, to \eqref{pd-W14_simplified_linear_3d_HM_theta_eta_n}, for $n=1$, implies:
\begin{align*}
\norm{(\nabla_h\theta^1(t),\nabla_h\eta^1(t))}_{\nLzLphP{\infty}{4}} \leq e^{C_0t}\norm{(\nabla_h\theta^0,\nabla_h\eta^0)}_{\nLzLphP{\infty}{4}},
\end{align*}
for all $t\in[0,T]$. As a result, and thanks to \eqref{nabla_h_u_simplified_3D_HM_n}, we have
\begin{align*}
&\norm{\nabla_h \bu^{1}(t)}_{\LpP{\infty}}  \leq \frac{C_1}{L}\norm{(\theta^0,\eta^0)}_{\nLpP{\infty}}\log\left(e^2+ C_1L^2e^{C_0t}\frac{\norm{(\nabla_h\theta^0,\nabla_h\eta^0)}_{\LzLphP{\infty}{4}}}{\norm{(\theta^0,\eta^0)}_{\nLpP{2}}}\right)\\
& \qquad\leq \frac{C_1}{L}\norm{(\theta^0,\eta^0)}_{\nLpP{\infty}}\log\left(e^2e^{C_0t}+ C_1L^2e^{C_0t}\frac{\norm{(\nabla_h\theta^0,\nabla_h\eta^0)}_{\LzLphP{\infty}{4}}}{\norm{(\theta^0,\eta^0)}_{\nLpP{2}}}\right)\\
&\qquad \leq \frac{C_1}{L}\norm{(\theta^0,\eta^0)}_{\nLpP{\infty}}\left(C_0t+ \log\left(e^2+ C_1L^2\frac{\norm{(\nabla_h\theta^0,\nabla_h\eta^0)}_{\LzLphP{\infty}{4}}}{\norm{(\theta^0,\eta^0)}_{\nLpP{2}}}\right)\right)
\end{align*}
\begin{align*}
&\qquad = C_0\left(1+ \frac{C_1}{L}t\norm{(\theta^0,\eta^0)}_{\nLpP{\infty}}\right),
\end{align*}
for al $t\in[0,T]$. Again, applying the method of characteristics followed by Gronwall's lemma to \eqref{pd-W14_simplified_linear_3d_HM_theta_eta_n} for $n=2$ implies
\begin{align*}
\norm{(\nabla_h\theta^2(t),\nabla_h\eta^2(t))}_{\nLzLphP{\infty}{4}} \leq e^{C_0t\left(1+ \frac{1}{2}\frac{C_1}{L}t\norm{(\theta^0,\eta^0)}_{\nLpP{\infty}}\right)}\norm{(\nabla_h\theta^0,\nabla_h\eta^0)}_{\nLzLphP{\infty}{4}},
\end{align*}
for all $t\in[0,T]$. Next, we proceed by induction. Assume that:
\begin{align*}
\norm{(\nabla_h\theta^n(t),\nabla_h\eta^n(t))}_{\nLzLphP{\infty}{4}}&\leq e^{g_n(t)t}\norm{(\nabla_h\theta^0,\nabla_h\eta^0)}_{\nLzLphP{\infty}{4}},
\end{align*}
for all $t \in [0,T]$, where $$g_n(t):=C_0\sum_{j=0}^{n-1} {\frac{\left(\frac{C_1}{L}t\norm{(\theta^0,\eta^0)}_{\nLpP{\infty}}\right)^j}{(j+1)!} }.$$
Then by \eqref{nabla_h_u_simplified_3D_HM_n} we show that:
\begin{align*}
&\norm{\nabla_h \bu^{n}(t)}_{\LpP{\infty}}  \leq \frac{C_1}{L}\norm{(\theta^{0},\eta^{0})}_{\nLpP{\infty}}\log\left(e^2+ C_1L^2\frac{\norm{(\nabla_h\theta^n(t),\nabla_h\eta^{n}(t))}_{\nLzLphP{\infty}{4}}}{\norm{(\theta^{0},\eta^{0})}_{\nLpP{2}}}\right)\\
& \leq \frac{C_1}{L}\norm{(\theta^{0},\eta^{0})}_{\nLpP{\infty}}\log\left(e^2+ C_1L^2e^{g_n(t)t}\frac{\norm{(\nabla_h\theta^0,\nabla_h\eta^0)}_{\nLzLphP{\infty}{4}}}{\norm{(\theta^0,\eta^0)}_{\nLpP{2}}}\right),
\end{align*}
for all $t\in[0,T]$. One can easily see, by the properties of the exponential and logarithmic functions, that

\begin{align*}
&C_1\log\left(e^2+ C_1L^2e^{g_n(t)t}\frac{\norm{(\nabla_h\theta^0,\nabla_h\eta^0)}_{\nLzLphP{\infty}{4}}}{\norm{(\theta^0,\eta^0)}_{\nLpP{2}}}\right)\\
&\leq C_1\left({g_n(t)t}+ \log\left(e^2+ C_1L^2\frac{\norm{(\nabla_h\theta^0,\nabla_h\eta^0)}_{\nLzLphP{\infty}{4}}}{\norm{(\theta^0,\eta^0)}_{\nLpP{2}}}\right)\right)\\
& = \frac{C_0L}{\norm{(\theta^0,\eta^0)}_{\nLpP{\infty}}} + C_1g_n(t)t\\
&=\frac{C_0L}{\norm{(\theta^0,\eta^0)}_{\nLpP{\infty}}}\sum_{j=0}^{n} \frac{\left(\frac{C_1}{L}t\norm{(\theta^0,\eta^0)}_{\nLpP{\infty}}\right)^j}{(j+1)!} = \frac{g_{n+1}(t)L}{\norm{(\theta^0,\eta^0)}_{\nLpP{\infty}}}.
\end{align*}
Thus,
\begin{align*}
\norm{\nabla_h \bu^{n}(t)}_{\LpP{\infty}} & \leq g_{n+1}(t),
\end{align*}
for all $t\in[0,T]$. Applying the method of characteristics followed by Gronwall's lemma to \eqref{pd-W14_simplified_linear_3d_HM_theta_eta_n} for $n+1$ implies:
\begin{align}
\norm{(\nabla_h\theta^{n+1}(t),\nabla_h\eta^{n+1}(t))}_{\nLzLphP{\infty}{4}}&\leq e^{g_{n+1}(t)t}\norm{(\nabla_h\theta^0,\nabla_h\eta^0)}_{\nLzLphP{\infty}{4}}.
\end{align}
for all $t\in [0,T]$. Setting
\begin{align}\label{J_0_simplified_3D_HM}
J_0(T) = e^{C_0Te^{\frac{C_1}{L}t\norm{(\theta^0,\eta^0)}_{\nLpP{\infty}}}},
\end{align}
thus we have proved, for all $n = 0, 1, 2 ,\ldots$,
\begin{align}
\norm{(\nabla_h\theta^{n}(t),\nabla_h\eta^{n}(t))}_{\nLzLphP{\infty}{4}} &\leq J_0(T) \norm{(\nabla_h\theta^0,\nabla_h\eta^0)}_{\nLzLphP{\infty}{4}},
\end{align}
for all $t\in[0,T]$. This completes the proof of the claim.
\begin{claim}\label{claim_2_simplified_3D_HM}
For all $n = 0, 1, 2 ,\ldots$,
\begin{align}
\label{L2_dztheta_dzeta_estimate_simplified_HM_n}
\sup_{t\in[0,T]}\norm{(\partial_z{ \theta^n}(t), \partial_z{\eta^n}(t))}_{\nLpP{2}} \leq {H_0(T)}\norm{(\partial_z\theta^0,\partial_z\eta^0)}_{\nLpP{2}},
\end{align}
where $H_0(T)$ is a constant that depends on $L$ and the norms of the initial data $(\theta^0,\eta^0)$, as well as the time $T$, and is specified in \eqref{H_0_simplified_3D_HM}.
\end{claim}
{\it{Proof of Claim 4.2:}}
The evolution equations of $\partial_z\theta^n$ and $\partial_z\eta^n$ are, respectively, given by:
\begin{subequations}
\begin{align}
\pd{\partial_z\theta^n}{t} + (\bu^{n-1}\cdot\nabla_h)\partial_z\theta^n +( \partial_z\bu^{n-1}\cdot\nabla_h)\theta^n -U_0\npd{\theta^n}{z}{2} = 0, \label{pd_dztheta_evol_n}\\
\pd{\partial_z\eta^n}{t} + (\bu^{n-1}\cdot\nabla_h)\partial_z\eta^n + (\partial_z\bu^{n-1}\cdot\nabla_h)\eta^n + U_0\npd{\eta^n}{z}{2} = 0.\label{pd_dzeta_evol_n}
\end{align}
\end{subequations}
When we take the $\LpP{2}$ inner product of \eqref{pd_dztheta_evol_n} with $\partial_z\theta^n$, and of \eqref{pd_dzeta_evol_n} with $\partial_z\eta^n$, we obtain:
\begin{align*}
\frac{1}{2}\od{\norm{\partial_z\theta^n}_{\LpP{2}}^2}{t} + \left<(\partial_z\bu^{n-1}\cdot\nabla_h)\theta^n, \partial_z\theta^n\right> = 0,\\
\frac{1}{2}\od{\norm{\partial_z\eta^n}_{\LpP{2}}^2}{t} + \left<(\partial_z\bu^{n-1}\cdot\nabla_h)\eta^n, \partial_z\eta^n\right> = 0.
\end{align*}
Since $\norm{\partial_z\omega^{n-1}}_{\LpP{2}} \leq \frac{1}{L}\norm{(\partial_z\theta^{n-1},\partial_z\eta^{n-1})}_{\nLpP{2}}$,
then by Corollary \ref{nonlinearity_estimate_simplified_3D_HM} and Claim \ref{claim_1_simplified_3D_HM}, we have:
\begin{align}\label{pd_dztheta_dzeta_estimate_n}
&\od{\norm{(\partial_z\theta^n,\partial_z\eta^n)}_{\nLpP{2}}}{t}\leq \notag \\
& \qquad \qquad \frac{C_2}{L^{1/2}}J_0(T)\norm{(\nabla_h\theta^0,\nabla_h\eta^0)}_{\nLzLphP{\infty}{4}}\norm{(\partial_z\theta^{n-1},\partial_z\eta^{n-1})}_{\nLpP{2}},
\end{align}
for some positive dimensionless constant $C_2$. When $n=1$, we have:
\begin{align*}
\od{\norm{(\partial_z\theta^1,\partial_z\eta^1)}_{\nLpP{2}}}{t}\leq \frac{C_2}{L^{1/2}}J_0(T)\norm{(\nabla_h\theta^0,\nabla_h\eta^0)}_{\nLzLphP{\infty}{4}}\norm{(\partial_z\theta^{0},\partial_z\eta^{0})}_{\nLpP{2}}.
\end{align*}
After we integrate with respect to time we obtain:
\begin{align}\label{od_dztheta_1_3d_simplified_HM}
&\norm{(\partial_z\theta^1(t),\partial_z\eta^1(t))}_{\nLpP{2}} \leq \notag \\
&\qquad \quad \norm{(\partial_z\theta^{0},\partial_z\eta^{0})}_{\nLpP{2}}\left(1+ \frac{C_2}{L^{1/2}}J_0(T)t\norm{(\nabla_h\theta^0,\nabla_h\eta^0)}_{\nLzLphP{\infty}{4}}\right).
\end{align}
For $n=1$, the inequality \eqref{pd_dztheta_dzeta_estimate_n} reads:
\begin{align*}
\od{\norm{(\partial_z\theta^2,\partial_z\eta^2)}_{\nLpP{2}}}{t}\leq \frac{C_2}{L^{1/2}}J_0(T)\norm{(\nabla_h\theta^0,\nabla_h\eta^0)}_{\nLzLphP{\infty}{4}}\norm{(\partial_z\theta^{1},\partial_z\eta^{1})}_{\nLpP{2}}.
\end{align*}
Thus, using \eqref{od_dztheta_1_3d_simplified_HM} and integrating with respect to time, we reach
\begin{align*}
&\norm{(\partial_z\theta^2(t),\partial_z\eta^2(t))}_{\nLpP{2}} \leq \\
& \qquad \qquad  \norm{(\partial_z\theta^{0},\partial_z\eta^{0})}_{\nLpP{2}}\sum_{k=0}^{2}\frac{1}{k!}\left(\frac{C_2}{L^{1/2}}J_0(T)t\norm{(\nabla_h\theta^0,\nabla_h\eta^0)}_{\nLzLphP{\infty}{4}}\right)^k.
\end{align*}
We proceed by induction. Suppose that:
\begin{align}\label{od_dztheta_induction_3d_simplified_HM}
&\norm{(\partial_z\theta^n(t),\partial_z\eta^n(t))}_{\nLpP{2}}\leq \notag \\
& \qquad \norm{(\partial_z\theta^{0},\partial_z\eta^{0})}_{\nLpP{2}}\sum_{k=0}^{n}\frac{1}{k!}\left(\frac{C_2}{L^{1/2}}J_0(T)t\norm{(\nabla_h\theta^0,\nabla_h\eta^0)}_{\nLzLphP{\infty}{4}}\right)^k.
\end{align}
After we substitute \eqref{od_dztheta_induction_3d_simplified_HM} in inequality \eqref{pd_dztheta_dzeta_estimate_n}, for $n+1$, and integrate with respect to time we have,
\begin{align*}
&\norm{(\partial_z\theta^{n+1}(t),\partial_z\eta^{n+1}(t))}_{\nLpP{2}}\leq \\
& \qquad \qquad \norm{(\partial_z\theta^{0},\partial_z\eta^{0})}_{\nLpP{2}}\sum_{k=0}^{n+1}\frac{1}{k!}\left(\frac{C_2}{L^{1/2}}J_0(T)t\norm{(\nabla_h\theta^0,\nabla_h\eta^0)}_{\nLzLphP{\infty}{4}}\right)^k.
\end{align*}
Set,
\begin{align}\label{H_0_simplified_3D_HM}
H_0(T) = e^{\frac{C_2}{L^{1/2}}J_0(T)T\norm{(\nabla_h\theta^0,\nabla_h\eta^0)}_{\nLzLphP{\infty}{4}}},
\end{align}
then we have just proven that for $n = 0, 1, 2 ,\ldots$,
\begin{align}
\norm{(\partial_z\theta^{n+1}(t),\partial_z\eta^{n+1}(t))}_{\nLpP{2}}\leq H_0(T) \norm{(\partial_z\theta^{0},\partial_z\eta^{0})}_{\nLpP{2}},
\end{align}
for all $t\in[0,T]$. This concludes the proof of Claim \ref{claim_2_simplified_3D_HM}.
Finally, by Proposition \ref{nonlinearity_L2_estimate_simplified_3D_HM} and estimate \eqref{Linfty_estimate_linear_simplified_HM_n_theta_eta}, for all $n = 0, 1, 2 ,\ldots$,

\begin{align}
\label{L2_estimate_nonlinearity_linear_simplified_HM_n_theta}
&\norm{(\bu^{n-1}\cdot\nabla_h)\theta^n(t)}_{\LpP{2}}+\norm{(\bu^{n-1}\cdot\nabla_h)\eta^n(t)}_{\LpP{2}} \notag \\
&\qquad \leq CL \norm{(\theta^0,\eta^0)}_{\nLpP{\infty}}\norm{(\nabla_h\theta^n(t),\nabla_h\eta^n(t))}_{\nLzLphP{\infty}{4}}.
\end{align}
As $(\theta^n, \eta^n)$ is a classical solution then by \eqref{L2_dztheta_dzeta_estimate_simplified_HM_n}
and \eqref{L2_estimate_nonlinearity_linear_simplified_HM_n_theta} we have
\begin{align}\label{L2_dttheta_dteta_estimate_simplified_HM_n}
&\sup_{t\in[0,T]}\norm{(\partial_t\theta^n(t),\partial_t\eta^n(t))}_{\nLpP{2}} \leq H_0(T)\norm{(\partial_z\theta^0,\partial_z\eta^0)}_{\nLpP{2}} \notag \\
&\qquad \qquad +  CLJ_0(T)\norm{(\theta^0,\eta^0)}_{\nLpP{\infty}}\norm{(\nabla_h\theta^0,\nabla_h\eta^0)}_{\nLzLphP{\infty}{4}}.
\end{align}
{\bf{Step 2}}: Convergence of the sequence $\{(\theta^n,\eta^n)\}_{n=0}^\infty$.\\
We will show that the sequence $\{(\theta^n,\eta^n)\}_{n=0}^\infty$ is a Cauchy sequence in $C([0,T];\nLpP{2})$.
\begin{claim}\label{claim_3_simplified_3D_HM}
For every $n = 0, 1, 2 ,\ldots,$
\begin{align*}
&\norm{(\theta^{n+1}-\theta^n,\eta^{n+1}-\eta^n)}_{\nLpP{2}}\leq \\
&\qquad \qquad \qquad \frac{1}{n!}\norm{(\theta^{0},\eta^{0})}_{\nLpP{2}}\left(\frac{2C_3}{L^{1/2}}K_0(T)t\norm{(\nabla_h\theta^0,\nabla_h\eta^0)}_{\nLzLphP{\infty}{4}}\right)^n,
\end{align*}
for some positive dimensionless constant $C_3$.
\end{claim}
{\it{Proof of Claim 4.3:}}
We notice that $(\theta^{n+1}-\theta^n)$ satisfies
\begin{subequations}
\begin{align*}
\pd{(\theta^{n+1}-\theta^n)}{t} + (\bu^{n}\cdot\nabla_h)\theta^{n+1} -(\bu^{n-1}\cdot\nabla_h)\theta^{n} - U_0\pd{(\theta^{n+1}-\theta^n)}{z} &=0,\\
\bu^{n-1} = K\convh \omega^ {n-1}, \, \bu^{n} = K\convh \omega^ {n}, \; \omega^{n-1} = \frac{1}{2L} (\theta^{n-1}-\eta^{n-1}),\, \omega^{n} &= \frac{1}{2L} (\theta^{n}-\eta^{n}),\\
\theta^{n+1}(0;\bx) = \theta^n(0;\bx) &= \theta^0(\bx).
\end{align*}
\end{subequations}
This can be rewritten as:
\begin{align*}
\pd{(\theta^{n+1}-\theta^n)}{t} + ((\bu^{n}-\bu^{n-1})\cdot\nabla_h)\theta^{n+1} + (\bu^{n-1}\cdot\nabla_h)(\theta^{n+1}-\theta^n) - U_0\pd{(\theta^{n+1}-\theta^n)}{z} &=0,
\end{align*}
and, taking the $\LpP{2}$ inner product with $(\theta^{n+1}-\theta^n)$ yields:
\begin{align*}
\frac{1}{2}\od{\norm{\theta^{n+1}-\theta^n}_{\LpP{2}}^2}{t} + \left< (\bu^{n}-\bu^{n-1})\cdot\nabla_h\theta^{n+1}, \theta^{n+1}-\theta^n  \right> = 0.
\end{align*}
Thus, by Corollary \ref{nonlinearity_estimate_simplified_3D_HM} we have,
\begin{align*}
\frac{1}{2}\od{\norm{\theta^{n+1}-\theta^n}_{\LpP{2}}^2}{t} &\leq \left| \left< ((\bu^{n}-\bu^{n-1})\cdot\nabla_h)\theta^{n+1}, \theta^{n+1}-\theta^n  \right>\right |\\
& \leq C_3 L^{1/2}\norm {\nabla_h \theta^{n+1}}_{\LzLphP{\infty}{4}} \norm{\omega^{n}-\omega^{n-1}}_{\LpP{2}} \norm{\theta^{n+1}-\theta^n}_{\LpP{2}},
\end{align*}
for some positive dimensionless constant $C_3$. Similar argument with $\eta^{n+1}-\eta^n$ will show,
\begin{align*}
\frac{1}{2}\od{\norm{\eta^{n+1}-\eta^n}_{\LpP{2}}^2}{t} & \leq C_3L^{1/2} \norm {\nabla_h \theta^{n+1}}_{\LzLphP{\infty}{4}} \norm{\omega^{n}-\omega^{n-1}}_{\LpP{2}} \norm{\eta^{n+1}-\eta^n}_{\LpP{2}}.
\end{align*}
Recall that
$\norm{\omega^{n}-\omega^{n-1}}_{\LpP{2}}\leq \frac{1}{L}\norm{(\theta^{n}-\theta^{n-1},\eta^{n}-\eta^{n-1})}_{\nLpP{2}}$. Thanks to \eqref{W14_estimate_linear_simplified_HM_n_theta_eta}, we have:
\begin{align}
\label{pd_L2_Cauchy_simplified_HM_n_theta_eta}
&\od{\norm{(\theta^{n+1}-\theta^n,\eta^{n+1}-\eta^n)}_{\nLpP{2}}}{t} \notag \\
&\quad  \leq \frac{C_3}{L^{1/2}}K_0(T)\norm{(\nabla_h\theta^0,\nabla_h\eta^0)}_{\nLzLphP{\infty}{4}}\norm{\omega^{n}(t)-\omega^{n-1}(t)}_{\LpP{2}}\notag \\
& \quad  \leq \frac{C_3}{L^{1/2}}K_0(T)\norm{(\nabla_h\theta^0,\nabla_h\eta^0)}_{\nLzLphP{\infty}{4}}\norm{(\theta^{n}-\theta^{n-1},\eta^{n}-\eta^{n-1})}_{\nLpP{2}}.
\end{align}
Notice that $\theta^{n+1}(0;\bx)-\theta^n(0;\bx) = \eta^{n+1}(0;\bx)-\eta^n(0;\bx)=0$, and notice that $\norm{(\theta^1-\theta^0,\eta^1-\eta^0)}_{\nLpP{2}} \leq 2\norm{(\theta^0,\eta^0)}_{\nLpP{2}} \,\, \forall n = 0, 1, 2 ,\ldots$. Integrating inequality \eqref{pd_L2_Cauchy_simplified_HM_n_theta_eta}, for $n =1$, with respect to time implies:
\begin{align*}
\norm{(\theta^{2}(t)-\theta^1(t),\eta^{2}(t)-\eta^1(t))}_{\nLpP{2}}
&\leq \frac{2C_3}{L^{1/2}}K_0(T)t\norm{(\nabla_h\theta^0,\nabla_h\eta^0)}_{\nLzLphP{\infty}{4}}  \norm{(\theta^{0},\eta^{0})}_{\nLpP{2}},
\end{align*}
for all $t\in[0,T]$. Estimate \eqref{pd_L2_Cauchy_simplified_HM_n_theta_eta}, for $n=2$, together with the above inequality give
\begin{align*}
&\od{\norm{(\theta^{3}-\theta^2,\eta^{3}-\eta^2)}_{\nLpP{2}}}{t} \notag \\
& \qquad \qquad \leq \frac{C_3}{L^{1/2}}K_0(T)\norm{(\nabla_h\theta^0,\nabla_h\eta^0)}_{\nLzLphP{\infty}{4}}\norm{(\theta^{2}-\theta^{1},\eta^{2}-\eta^{1})}_{\nLpP{2}}\notag \\
& \qquad \qquad \leq \norm{(\theta^{0},\eta^{0})}_{\nLpP{2}}\left(\frac{2C_3}{L^{1/2}}K_0(T)\norm{(\nabla_h\theta^0,\nabla_h\eta^0)}_{\nLzLphP{\infty}{4}}\right)^2 t .
\end{align*}
Integrating the inequality yields:
\begin{align*}
\norm{(\theta^{3}-\theta^2,\eta^{3}-\eta^2)}_{\nLpP{2}}\leq \frac{1}{2} \norm{(\theta^{0},\eta^{0})}_{\nLpP{2}}\left(\frac{2C_3}{L^{1/2}}K_0(T)t\norm{(\nabla_h\theta^0,\nabla_h\eta^0)}_{\nLzLphP{\infty}{4}}\right)^2 .
\end{align*}
Proceeding by induction, suppose that :
\begin{align}\label{induction_step_claim_3_simplified_3d_HM}
&\norm{(\theta^{n+1}-\theta^n,\eta^{n+1}-\eta^n)}_{\nLpP{2}} \notag\\
&\qquad \qquad \qquad \leq \frac{1}{n!}\norm{(\theta^{0},\eta^{0})}_{\nLpP{2}}\left(\frac{2C_3}{L^{1/2}}K_0(T)t\norm{(\nabla_h\theta^0,\nabla_h\eta^0)}_{\nLzLphP{\infty}{4}}\right)^n.
\end{align}
Then by \eqref{pd_L2_Cauchy_simplified_HM_n_theta_eta}, and the induction assumption \eqref{induction_step_claim_3_simplified_3d_HM} we obtain
\begin{align*}
&\od{\norm{(\theta^{n+2}-\theta^{n+1},\eta^{n+2}-\eta^{n+1})}_{\nLpP{2}}}{t} \notag \\
& \qquad \qquad \leq C_3K_0(T)\norm{(\nabla_h\theta^0,\nabla_h\eta^0)}_{\nLzLphP{\infty}{4}}\norm{(\theta^{n+1}-\theta^{n},\eta^{n+1}-\eta^{n})}_{\nLpP{2}}\notag \\
& \qquad \qquad \leq \frac{1}{n!}\norm{(\theta^{0},\eta^{0})}_{\nLpP{2}}\left(\frac{2C_3}{L^{1/2}}K_0(T)\norm{(\nabla_h\theta^0,\nabla_h\eta^0)}_{\nLzLphP{\infty}{4}}\right)^{n+1}t^n.
\end{align*}
Integrating the inequality implies:
\begin{align*}
&\norm{(\theta^{n+2}-\theta^{n+1},\eta^{n+2}-\eta^{n+1})}_{\nLpP{2}} \\
&\qquad \qquad \leq \frac{1}{(n+1)!}\norm{(\theta^{0},\eta^{0})}_{\nLpP{2}}\left(\frac{2C_3}{L^{1/2}}K_0(T)t\norm{(\nabla_h\theta^0,\nabla_h\eta^0)}_{\nLzLphP{\infty}{4}}\right)^{n+1}.
\end{align*}
This, in turn, proves Claim \ref{claim_3_simplified_3D_HM}.
Now, let $m>n$, we conclude from Claim \ref{claim_3_simplified_3D_HM} that:
\begin{align*}
&\sup_{t\in[0,T]}\norm{(\theta^{m}(t)-\theta^n(t),\eta^{m}(t)-\eta^n(t))}_{\nLpP{2}}\\
&\qquad \qquad \leq \norm{(\theta^{0},\eta^{0})}_{\nLpP{2}}\sum_{k=n+1}^m \frac{1}{k!}\left(\frac{2C_3}{L^{1/2}}K_0(T)T\norm{(\nabla_h\theta^0,\nabla_h\eta^0)}_{\nLzLphP{\infty}{4}}\right)^{k}.
\end{align*}
This implies that $\{(\theta^n,\eta^n)\}_{n=1}^\infty$ is a Cauchy sequence in $C([0,T];\nLpP{2})$ since
$$\sum_{k=0}^\infty\frac{1}{k!}\left(\frac{2C_3}{L^{1/2}}K_0(T)T\norm{(\nabla_h\theta^0,\nabla_h\eta^0)}_{\nLzLphP{\infty}{4}}\right)^{k}= e^{\frac{2C_3}{L^{1/2}}K_0(T)T\norm{(\nabla_h\theta^0,\nabla_h\eta^0)}_{\nLzLphP{\infty}{4}}},$$
Then there exists  $({ \theta}, { \eta}) \in C([0,T];\nLpP{2})$ such that:
\begin{align}
\label{strong_convergence_theta_eta}
(\theta^n,\eta^n) \strong ({ \theta},{ \eta}) \quad \quad \quad \text{in}\, \, \, C([0,T];\nLpP{2}),
\end{align}
as $n\strong \infty$; and hence,
\begin{align}\label{L2_convergence_norm_theta}
\norm{({\theta}(t),{\eta}(t)}_{\nLpP{2}} = \lim_{n\rightarrow0}\norm{(\theta^n(t),\eta^n(t))}_{\nLpP{2}}= \norm{(\theta^0,\eta^0)}_{\nLpP{2}},
\end{align}
for all $t\in[0,T]$. Define ${ \bu} := K\convh{ \omega} = \frac{1}{2L} K\convh({ \theta}-{ \eta})$, then by the elliptic regularity estimate \eqref{elliptic_regularity_Yudovich},
\begin{align*}
&\norm{\bu^n(t;z)-{\bu}(t;z)}_{\LphP{2}}^2 \leq CL \norm{\omega^n(t;z)-{\omega}(t;z)}_{\LphP{2}}^2 \notag \\
&\qquad \qquad \qquad  \leq C \norm{(\theta^n(t;z)-{\theta}(t;z),\eta^n(t;z)-{\eta}(t;z))}_{\nLphP{2}}^2,
\end{align*}
for every $n=1,2,..$. After we integrate with respect to $z$ over $[0,L]$ we obtain:
\begin{align}\label{4_40_simplified-3d_HM}
\norm{\bu^n(t)-{\bu}(t)}_{\LpP{2}} \leq  C \norm{(\theta^n(t)-{ \theta}(t),\eta^n(t)-{\eta}(t))}_{\nLpP{2}},
\end{align}
for all $n=1,2,\ldots$. Thus, we conclude that
\begin{align}
\label{strong_convergence_u}
{\bu^n} \strong {{\bu}} \quad \quad \quad \text{in}\, \, \, C([0,T];\LpP{2}),
\end{align}
as $n\strong \infty$, and by \eqref{4_40_simplified-3d_HM}
\begin{align}\label{L2_norm_convergence_u}
\sup_{t\in[0,T]}\norm{{\bu}(t)}_{\LpP{2}}&\leq C\sup_{t\in[0,T]}\norm{({\theta}(t),{\eta}(t))}_{\nLpP{2}} = C\norm{(\theta^0,\eta^0)}_{\nLpP{2}}.
\end{align}
{\bf{Step 3}}: Passing in the limit to a weak solution $({ \theta}, { \eta})$.\\
For each $n = 0, 1, 2 ,\ldots$, $(\theta^n,\eta^n)$ is a classical solution of \eqref{simplified_3D_HM_n}, thus $(\theta^n,\eta^n)$ is also a weak solution of \eqref{simplified_3D_HM_n} in the sense of Definition \ref{weak_solution_definition_simplified_3D_HM}, then for any $\chi_{\bm}, \gamma_{\bk} \in C^\infty([0,T])$, with $\chi_{\bm}(T)=0$ and $\gamma_{\bk}(T)=0$,
\begin{align*}
&-\int_0^T\left<\theta^n(s),e^{2\pi i\bm\cdot \bx}\right>\chi^{'}_{\bm}(s)\,ds - \int_0^T\left<\bu\theta^n,\nabla_h e^{2\pi i\bm\cdot \bx}\right>\chi_{\bm}(s)\,ds\notag \\
&\qquad \qquad \qquad \qquad \qquad +U_0\int_0^T\left<\theta^n(s),\pd{e^{2\pi i\bm\cdot \bx}}{z}\right>\chi_{\bm}(s)\,ds =\left<\theta^0,e^{2\pi i\bm\cdot \bx}\right>\chi_{\bm}(0),
\end{align*}
\begin{align*}
&-\int_0^T\left<\eta^n(s),e^{2\pi i\bk\cdot \bx}\right>\gamma^{'}_{\bk}(s)\,ds - \int_0^T\left<\bu\eta^n,\nabla_h e^{2\pi i\bk\cdot \bx}\right>\gamma_{\bk}(s)\,ds \notag \\
&\qquad \qquad \qquad \qquad \qquad -U_0\int_0^T\left<\eta^n(s),\pd{e^{2\pi i\bk\cdot \bx}}{z}\right>\gamma_{\bk}(s)\,ds =\left<\eta^0,e^{2\pi i\bk\cdot \bx}\right>\gamma_{\bk}(0).
\end{align*}
for all $\bm,\bk \in\nZ^2$. The strong convergence \eqref{strong_convergence_theta_eta}, obtained in Step 2 of the proof, implies that

\begin{align}
\label{linear_convergence_1_theta}
\int_0^T\left<\theta^n(s),e^{2\pi i\bm\cdot \bx}\right>\chi^{'}_{\bm}(s)\,ds & \strong \int_0^T\left<{{\theta}}(s),e^{2\pi i\bm\cdot \bx}\right>\chi^{'}_{\bm}(s)\,ds, \\
\label{linear_convergence_1_eta}
\int_0^T\left<\eta^n(s),e^{2\pi i\bk\cdot \bx}\right>\gamma^{'}_{\bk}(s)\,ds & \strong \int_0^T\left<{{\eta}}(s),e^{2\pi i\bk\cdot \bx}\right>\gamma^{'}_{\bk}(s)\,ds, \\
\label{linear_convergence_2_theta}
\int_0^T\left<\theta^n(s),\pd{e^{2\pi i\bm\cdot \bx}}{z}\right>\chi_{\bm}(s)\,ds & \strong  \int_0^T\left<{{\theta}}(s),\pd{e^{2\pi i\bm\cdot \bx}}{z}\right>\chi_{\bm}(s)\,ds,\\
\label{linear_convergence_2_eta}
\int_0^T\left<\eta^n(s),\pd{e^{2\pi i\bk\cdot \bx}}{z}\right>\gamma_{\bk}(s)\,ds & \strong  \int_0^T\left<{{\eta}}(s),\pd{e^{2\pi i\bk\cdot \bx}}{z}\right>\gamma_{\bk}(s)\,ds,
\end{align}
as $n \strong \infty$. It remains to show the convergence for the nonlinear terms. By adding and subtracting $\left<\bu^{n-1}{{\theta}},\nabla_he^{2\pi i\bm\cdot \bx}\right>$ and $\left<\bu^{n-1}{{\eta}},\nabla_he^{2\pi i\bk\cdot \bx}\right>$, respectively, we get
\begin{align*}
&\int_0^T\left<\bu^{n-1}\theta^n,\nabla_h e^{2\pi i\bm\cdot \bx}\right>\chi_{\bm}(s)\,ds - \int_0^T\left<{{\bu}}{{\theta}},\nabla_h e^{2\pi i\bm\cdot \bx}\right>\chi_{\bm}(s)\,ds =\notag \\
 & \int_0^T\left<\bu^{n-1}(\theta^n - {{\theta}}) ,\nabla_h e^{2\pi i\bm\cdot \bx}\right>\chi_{\bm}(s)\,ds
+ \int_0^T\left<(\bu^{n-1}-{{\bu}}) \theta^n,\nabla_h e^{2\pi i\bm\cdot \bx}\right>\chi_{\bm}(s)\,ds,
\end{align*}
and,
\begin{align*}
&\int_0^T\left<\bu^{n-1}\eta^n,\nabla_h e^{2\pi i\bk\cdot \bx}\right>\gamma_{\bk}(s)\,ds - \int_0^T\left<{{\bu}}{{\eta}},\nabla_h e^{2\pi i\bk\cdot \bx}\right>\gamma_{\bk}(s)\,ds =\notag \\
 & \int_0^T\left<\bu^{n-1}(\eta^n - {{\eta}}) ,\nabla_h e^{2\pi i\bk\cdot \bx}\right>\gamma_{\bk}(s)\,ds
+ \int_0^T\left<(\bu^{n-1}-{{\bu}}) \theta^n,\nabla_h e^{2\pi i\bk\cdot \bx}\right>\gamma_{\bk}(s)\,ds.
\end{align*}
By H\"older's inequality, we have:
\begin{align*}
&\abs{\int_0^T\left<\bu^{n-1}(\theta^n - {{\theta}}) ,\nabla_h e^{2\pi i\bm\cdot \bx}\right>\chi_\bm(s)\,ds} \leq \notag \\
&\quad \int_0^T \norm{\nabla_h e^{2\pi i\bm\cdot \bx}}_{\LpP{\infty}}\norm{\bu^{n-1}(s)}_{\LpP{2}}\norm{\theta^n(t) - {{\theta}(t)}}_{\LpP{2}}|\chi_\bm(s)|\,ds.
\end{align*}
Since $\abs{\chi_\bm(t)}\leq C$, for every $t\in[0,T]$, and $\norm{\nabla_h e^{2\pi i\bm\cdot \bx}}_{\LpP{\infty}}\leq \frac{C}{L}$, then from the above and \eqref{L2_norm_convergence_u} we obtain
\begin{align*}
&\abs{\int_0^T\left<\bu^{n-1}(\theta^n - {{\theta}}) ,\nabla_h e^{2\pi i\bm\cdot \bx}\right>\chi_\bm(s)\,ds} \leq \frac{C}{L}T\norm{(\theta^0,\eta^0)}_{\nLpP{2}}\sup_{t\in[0,T]}\norm{\theta^n(t) - {{\theta}(t)}}_{\LpP{2}}.
\end{align*}
Similar argument, and the use of \eqref{L2_convergence_norm_theta}, will imply
\begin{align*}
&\abs{\int_0^T\left<(\bu^{n-1}-{{\bu}}) \theta,\nabla_h e^{2\pi i\bm\cdot \bx}\right>\chi_\bm(s)\,ds} \leq\frac{C}{L}T\norm{(\theta^0,\eta^0)}_{\nLpP{2}}\sup_{t\in[0,T]}\norm{\bu^{n-1}(t) - {\bu}(t)}_{\LpP{2}}.
\end{align*}
We can conclude, similarly, that
\begin{align*}
&\abs{\int_0^T\left<\bu^{n-1}(\eta^n - {{\eta}}) ,\nabla_h e^{2\pi i\bk\cdot \bx}\right>\gamma_\bk(s)\,ds} \leq  \frac{C}{L}T\norm{(\theta^0,\eta^0)}_{\nLpP{2}}\sup_{t\in[0,T]}\norm{\eta^n(t) - {{\eta}(t)}}_{\LpP{2}},
\end{align*}
and
\begin{align*}
&\abs{\int_0^T\left<(\bu^{n-1}-{{\bu}}) \eta,\nabla_h e^{2\pi i\bk\cdot \bx}\right>\gamma_\bk(s)\,ds} \leq \frac{C}{L}T\norm{(\theta^0,\eta^0)}_{\nLpP{2}}\sup_{t\in[0,T]}\norm{\bu^{n-1}(t) - {{\bu}(t)}}_{\LpP{2}}.
\end{align*}
Thus the convergence,
\begin{align}
\label{nonlinearity_convergence_theta_1}
\int_0^T\left<\bu^{n-1}(\theta^n - {{\theta}}) ,\nabla_h e^{2\pi i\bm\cdot \bx}\right>\chi_\bm(s)\,ds &\strong 0, \\
\label{nonlinearity_convergence_theta_2}
\int_0^T\left<(\bu^{n-1}-{{\bu}}) \theta^n,\nabla_h e^{2\pi i\bm\cdot \bx}\right>\chi_\bm(s)\,ds &\strong 0,\\
\label{nonlinearity_convergence_eta_1}
\int_0^T\left<\bu^{n-1}(\eta^n - {{\eta}}) ,\nabla_h e^{2\pi i\bk\cdot \bx}\right>\gamma_\bk(s)\,ds &\strong 0, \\
\label{nonlinearity_convergence_eta_2}
\int_0^T\left<(\bu^{n-1}-{{\bu}}) \eta^n,\nabla_h e^{2\pi i\bk\cdot \bx}\right>\gamma_\bk(s)\,ds &\strong 0,
\end{align}
as $n \strong \infty$, follows immediately by \eqref{strong_convergence_theta_eta} and \eqref{strong_convergence_u}.
The convergence in \eqref{linear_convergence_1_theta}, \eqref{linear_convergence_2_theta}, \eqref{linear_convergence_1_eta}, \eqref{linear_convergence_2_eta}, \eqref{nonlinearity_convergence_theta_1}, \eqref{nonlinearity_convergence_theta_2},  \eqref{nonlinearity_convergence_eta_1} and \eqref{nonlinearity_convergence_eta_2} show that $({{\theta}},{{\eta}})$ satisfies:
\begin{align}\label{solution_theta_distribution_simplified_3D_HM}
&\int_0^T\left<{{\theta}}(s),e^{2\pi i\bm\cdot \bx}\right>\chi_\bm^{'}(s)\,ds - \int_0^T\left<{{\bu}}{{\theta}},\nabla_h e^{2\pi i\bm\cdot \bx}\right>\chi_\bm(s)\,ds \notag \\
&\qquad \qquad \qquad +U_0\int_0^T\left<{{\theta}}(s), \pd{e^{2\pi i\bm\cdot \bx}}{z}\right>\chi_\bm(s)\,ds=\left<\theta^0,e^{2\pi i\bm\cdot \bx}\right>\chi_\bm(0),
\end{align}
\begin{align}\label{solution_eta_distribution_simplified_3D_HM}
&\int_0^T\left<{{\eta}}(s),e^{2\pi i\bk\cdot \bx}\right>\gamma_\bk^{'}(s)\,ds - \int_0^T\left<{{\bu}}{{\eta}},\nabla_h e^{2\pi i\bk\cdot \bx}\right>\gamma_\bk(s)\,ds \notag \\
&\qquad \qquad \qquad -U_0\int_0^T\left<{{\eta}}(s), \pd{e^{2\pi i\bk\cdot \bx}}{z}\right>\gamma_\bk(s)\,ds=\left<\eta^0,e^{2\pi i\bk\cdot \bx}\right>\gamma_\bk(0),
\end{align}
where ${ \bu} = K\convh{ \omega} = \frac{1}{2L}K\convh({ \theta} - { \eta})$.

Now, we will check that ${ \theta}(0) = \theta^0$ and ${ \eta}(0) = \eta^0$. As we showed before the statement of Theorem \ref{global_existence_simplified_HM_classical}, for any $\psi_1,\psi_2 \in L^1([0,T];\LpP{2})$,
\begin{align*}
\int_0^T\left<\pd{\theta}{t}(t),\psi_1(t)\right>\,dt &=\int^0_T\left<-\nabla_h\cdot({ \bu} { \theta})(t), \psi_1(t)\right>\,dt + U_0\int_0^T\left<\pd{\theta}{z}(t), \psi_1(t)\right>\,dt,\\
\int_0^T\left<\pd{\eta}{t}(t),\psi_2(t)\right>\,dt &= \int_0^T\left<-\nabla_h\cdot({ \bu} { \eta})(t), \psi_2(t)\right>\,dt - U_0\int_0^T\left<\pd{\eta}{z}(t), \psi_2(t)\right>\,dt.
\end{align*}
Choosing $\psi_1(t; \bx) = e^{2\pi i\bm\cdot \bx}\chi_{\bm}(t),\psi_2(t; \bx) = e^{2\pi i\bk\cdot \bx}\gamma_{\bk}(t)$ and using integration by parts and  integrating with respect to time imply that
\begin{align*}
\int_0^T\left<{{\theta}}(s),e^{2\pi i\bm\cdot \bx}\right>\chi_\bm^{'}(s)\,ds &- \int_0^T\left<{{\bu}}{{\theta}},\nabla_h e^{2\pi i\bm\cdot \bx}\right>\chi_\bm(s)\,ds \notag \\
&+U_0\int_0^T\left<{{\theta}}(s), \pd{e^{2\pi i\bm\cdot \bx}}{z}\right>\chi_\bm(s)\,ds=\left<{ \theta}(0),e^{2\pi i\bm\cdot \bx}\right>\chi_\bm(0),
\end{align*}
and
\begin{align*}
\int_0^T\left<{{\eta}}(s),e^{2\pi i\bk\cdot \bx}\right>\gamma_\bk^{'}(s)\,ds &- \int_0^T\left<{{\bu}}{{\eta}},\nabla_h e^{2\pi i\bk\cdot \bx}\right>\gamma_\bk(s)\,ds \notag \\
&-U_0\int_0^T\left<{{\eta}}(s), \pd{e^{2\pi i\bk\cdot \bx}}{z}\right>\gamma_\bk(s)\,ds=\left<{ \eta}(0),e^{2\pi i\bk\cdot \bx}\right>\gamma_\bk(0).
\end{align*}
By comparison with \eqref{solution_theta_distribution_simplified_3D_HM} and \eqref{solution_eta_distribution_simplified_3D_HM}, we see that

\begin{align*}
\left<{ \theta}(0),e^{2\pi i\bm\cdot \bx}\right>\chi_\bm(0)& = \left<\theta^0,e^{2\pi i\bm\cdot \bx}\right>\chi_\bm(0), \\
\left<{ \eta}(0),e^{2\pi i\bk\cdot \bx}\right>\gamma_\bk(0) &= \left<\eta^0,e^{2\pi i\bk\cdot \bx}\right>\gamma_\bk(0).
\end{align*}
for all $\bm, \bk\in\nZ^2$ and any function $\chi_\bm$ and $\gamma_\bk$. We can choose $\chi_\bm, \gamma_\bk$ such that $\chi_\bm(0), \gamma_\bk(0) \not = 0$ and therefore
$$
\left<{ \theta}(0) - \theta^0, e^{2\pi i\bm\cdot \bx}\right> = \left<{ \eta}(0) - \eta^0, e^{2\pi i\bk\cdot \bx}\right> =0$$
This implies
\begin{align}\label{theta_0_eta_0_equality_simplified_3D_HM}
{ \theta}(0) = \theta^0, \qquad { \eta}(0) = \eta^0.
\end{align}

By \eqref{Linfty_estimate_linear_simplified_HM_n_theta_eta}, the sequence $\{(\theta^n, \eta^n)\}_{n=0}^\infty$ is uniformly bounded in $L^\infty([0,T];\nLpP{p})$. Estimate \eqref{W14_estimate_linear_simplified_HM_n_theta_eta} shows that
$\{(\nabla_h\theta^n,\nabla_h \eta^n)\}_{n=0}^\infty$ is uniformly bounded in $L^\infty([0,T];\\ \nLzLphP{\infty}{4})$. Estimate \eqref{L2_dztheta_dzeta_estimate_simplified_HM_n} shows that $\{(\partial_z\theta^n,\partial_z\eta^n)\}_{n=0}^\infty$ is uniformly bounded in $L^\infty([0,T];\nLpP{2})$, and the estimate \eqref{L2_dttheta_dteta_estimate_simplified_HM_n} shows that $\{(\partial_t\theta^n,\partial_t\eta^n\}$ is uniformly bounded in $L^\infty([0,T];\nLpP{2})$. We can pass, by the Banach-Alaoglu theorem, to a weak-$\ast$ convergent subsequence (that we will still denote by $\{(\theta^n,\eta^n)\}_{n=0}^\infty$) in $L^\infty([0,T];\nLpP{\infty})$ such that $\{(\nabla_h\theta^n,\nabla_h \eta^n)\}_{n=0}^\infty$ is weak-$\ast$ in $L^\infty([0,T];\nLzLphP{\infty}{4})$, $\{(\partial_z\theta^n,\partial_z\eta^n)\}_{n=0}^\infty$ and $\{(\partial_t\theta^n,\partial_t\eta^n)\}_{n=0}^\infty$ are also weak-$\ast$ convergent in $L^\infty([0,T];\\ \nLpP{2})$.
Then, by the uniqueness of the limit, we have:
\begin{align*}
(\theta^n,\eta^n) &\weakstar ({{\theta}}, { \eta})\qquad\qquad\quad \text {in} \,\,\,L^\infty([0,T];\nLpP{p}), \\
(\nabla_h\theta^n,\nabla_h\eta^n) & \weakstar ( \nabla_h{{\theta}},\nabla_h{ \eta}) \qquad \; \text {in} \,\,\,L^\infty([0,T];\nLzLphP{\infty}{4}), \\
(\partial_z \theta^n,\partial_z\eta^n) &\weakstar  (\partial_z{{\theta}}, \partial_z{ \eta}) \qquad \quad \text{in} \,\,\,L^\infty([0,T];\nLpP{2}),\\
(\partial_t \theta^n,\partial_t\eta^n) &\weakstar  (\partial_t{{\theta}},\partial_t{ \eta}) \qquad \quad\, \text{in} \,\,\,L^\infty([0,T];\nLpP{2}),
\end{align*}
as $n \strong \infty$ for any $p\in[1,\infty]$. Then, it follows that:
\begin{align*}
\sup_{t\in[0,T]}\norm{({\theta}(t),{ \eta}(t))}_{\nLpP{p}} &\leq \norm{(\theta^0,\eta^0)}_{\nLpP{p}},
\end{align*}
for any $p\in[1,\infty]$, and by \eqref{theta_0_eta_0_equality_simplified_3D_HM} we conclude that
\begin{align*}
\sup_{t\in[0,T]}\norm{({\theta}(t),{ \eta}(t))}_{\nLpP{p}} &= \norm{(\theta^0,\eta^0)}_{\nLpP{p}},
\end{align*}
for any $p\in[1,\infty]$. Also,
\begin{align*}
\sup_{t\in[0,T]}\norm{(\nabla_h{\theta}(t),\nabla_h{\eta}(t))}_{\nLzLphP{\infty}{4}} \leq {J_0(T)}\norm{(\nabla_h\theta^0,\nabla_h\eta^0)}_{\nLzLphP{\infty}{4}},
\end{align*}
\begin{align*}
\sup_{t\in[0,T]}\norm{(\partial_z{ \theta}(t), \partial_z{ \eta}(t))}_{\nLpP{2}} \leq {H_0(T)}\norm{(\partial_z\theta^0,\partial_z\eta^0)}_{\nLpP{2}},
\end{align*}
\begin{align*}
&\sup_{t\in[0,T]}\norm{(\partial_t{ \theta}(t),\partial_t{ \eta}(t))}_{\nLpP{2}} \leq H_0(T)\norm{(\partial_z\theta^0,\partial_z\eta^0)}_{\nLpP{2}} \notag \\
&\qquad \qquad +  CLJ_0(T)\norm{(\theta^0,\eta^0)}_{\nLpP{\infty}}\norm{(\nabla_h\theta^0,\nabla_h\eta^0)}_{\nLzLphP{\infty}{4}}.
\end{align*}

We just showed that $({ \theta},{ \eta})$ is a weak solution of \eqref{simplified_3D_HM} in the sense of Definition \ref{weak_solution_definition_simplified_3D_HM}. Finally, by a similar argument as in the proofs of Theorem \ref{global_existence_uniqueness_linear_simplified_HM} and Proposition \ref{nabla_h_u_Linfty_estimate_simplified_3D_HM}, we have
\begin{align*}
\sup_{t\in[0,T]}\norm{\nabla_h {\bu}(t)}_{\LpP{\infty}}&\leq \frac{C}{L}\norm{(\theta^0,\eta^0)}_{\nLpP{\infty}}\log \left(e^2+CL^2J_0(T)\frac{\norm{(\nabla_h\theta^0,\nabla_h\eta^0)}_{\nLzLphP{\infty}{4}}}{\norm{(\theta^0,\eta^0)}_{\nLpP{2}}}\right).
\end{align*}
The uniqueness of the weak solution $({\theta}, { \eta})$ is an immediate consequence of the next Corollary \ref{cont_dep_initial_data_simplified_HM}. This completes the proof.
\end{proof}

\begin{corollary}
\label{cont_dep_initial_data_simplified_HM}
Let $(\theta_1^0,\eta_1^0), (\theta_2^0,\eta_2^0)\in \nLpP{2}$, $(\nabla_h\theta_1^0,\nabla_h\eta_1^0), (\nabla_h\theta_2^0,\nabla_h\eta_2^0)\in \nLzLphP{\infty}{4} $ and assume the existence of corresponding two weak solutions  $(\theta_1,\eta_1), (\theta_2,\eta_2)$ of \eqref{simplified_3D_HM}, such that the weak solution $(\theta_2,\eta_2)$ satisfies
\begin{align}\label{J_0_2_assumption_simplified_3D_HM}
\sup_{t\in[0,T]}\norm{(\nabla_h\theta_2(t),\nabla_h\eta_2(t))}_{\nLzLphP{\infty}{4}} \leq J_{0,2}\norm{(\nabla_h\theta_2^0,\nabla_h\eta_2^0)}_{\nLzLphP{\infty}{4}}.
\end{align}
for some positive constant $J_{0,2}$. Then,
\begin{align}
\label{cont_dep_inequality_simplified_HM}
&\sup_{t\in[0,T]}\norm{(\theta_1(t)-\theta_2(t),\eta_1(t)-\eta_2(t))}_{\nLpP{2}}\notag \\
& \qquad\qquad \leq Ce^{\frac{C}{L^{1/2}}J_{0,2}T \norm{(\nabla_h\theta_2^0,\nabla_h\eta_2^0)}_{\nLzLphP{\infty}{4}}}\norm{(\theta_1^0-\theta_2^0,\eta_1^0-\eta_2^0)}_{\nLpP{2}}.
\end{align}
\end{corollary}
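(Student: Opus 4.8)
The plan is to carry out an $\LpP{2}$ energy estimate on the difference of the two solutions, in the same spirit as the Cauchy estimate of Claim \ref{claim_3_simplified_3D_HM}, and to close it with Gr\"onwall's lemma. Write $\bar\theta := \theta_1 - \theta_2$ and $\bar\eta := \eta_1 - \eta_2$, and set $\bar\bu := \bu_1 - \bu_2 = K\convh\bar\omega$ with $\bar\omega := \frac{1}{2L}(\bar\theta - \bar\eta)$, using the linearity of the Biot--Savart operator $K\convh$. Subtracting the two copies of \eqref{simplified_3D_HM_evol} and using the algebraic identity $(\bu_1\cdot\nabla_h)\theta_1 - (\bu_2\cdot\nabla_h)\theta_2 = (\bu_1\cdot\nabla_h)\bar\theta + (\bar\bu\cdot\nabla_h)\theta_2$ gives
\begin{align*}
\pd{\bar\theta}{t} + (\bu_1\cdot\nabla_h)\bar\theta + (\bar\bu\cdot\nabla_h)\theta_2 - U_0\pd{\bar\theta}{z} = 0,
\end{align*}
together with the analogous equation for $\bar\eta$ carrying the term $+U_0\partial_z\bar\eta$ and source $(\bar\bu\cdot\nabla_h)\eta_2$.

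Next I would take the $\LpP{2}$ inner product of this equation with $\bar\theta$. Since $\nabla_h\cdot\bu_1 = 0$, the field $(\bu_1, -U_0)$ is divergence free in $\nR^3$, so the transport term and the vertical term cancel, $\ip{(\bu_1\cdot\nabla_h)\bar\theta}{\bar\theta} - U_0\ip{\partial_z\bar\theta}{\bar\theta} = 0$ by periodicity, leaving
\begin{align*}
\frac{1}{2}\od{\norm{\bar\theta}_{\LpP{2}}^2}{t} + \ip{(\bar\bu\cdot\nabla_h)\theta_2}{\bar\theta} = 0.
\end{align*}
The cross term is exactly of the form handled by Corollary \ref{nonlinearity_estimate_simplified_3D_HM} (applied with velocity $\bar\bu = K\convh\bar\omega$, $\psi = \theta_2$, $\phi = \bar\theta$), which yields $|\ip{(\bar\bu\cdot\nabla_h)\theta_2}{\bar\theta}| \leq CL^{1/2}\norm{\bar\omega}_{\LpP{2}}\norm{\nabla_h\theta_2}_{\LzLphP{\infty}{4}}\norm{\bar\theta}_{\LpP{2}}$. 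Doing the same for $\bar\eta$, dividing each inequality by the respective $L^2$ norm, adding them, and using $\norm{\bar\omega}_{\LpP{2}} \leq \frac{1}{2L}\norm{(\bar\theta,\bar\eta)}_{\nLpP{2}}$ together with the hypothesis \eqref{J_0_2_assumption_simplified_3D_HM}, I arrive at
\begin{align*}
\od{\norm{(\bar\theta,\bar\eta)}_{\nLpP{2}}}{t} \leq \frac{C}{L^{1/2}}J_{0,2}\norm{(\nabla_h\theta_2^0,\nabla_h\eta_2^0)}_{\nLzLphP{\infty}{4}}\norm{(\bar\theta,\bar\eta)}_{\nLpP{2}},
\end{align*}
where the factor $L^{1/2}$ from the Corollary combines with the $1/L$ from $\bar\omega$ to give the stated $L^{-1/2}$.

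Integrating this differential inequality by Gr\"onwall's lemma over $[0,t]$, and recalling that $\norm{(\bar\theta(0),\bar\eta(0))}_{\nLpP{2}} = \norm{(\theta_1^0 - \theta_2^0, \eta_1^0 - \eta_2^0)}_{\nLpP{2}}$, produces \eqref{cont_dep_inequality_simplified_HM} directly, with the generic constant $C$ absorbing the dimensionless factors. Uniqueness of the weak solution constructed in Theorem \ref{global_existence_simplified_HM_classical} is then the special case $\theta_1^0 = \theta_2^0$, $\eta_1^0 = \eta_2^0$, taking $(\theta_2,\eta_2)$ to be that constructed solution, which satisfies \eqref{J_0_2_assumption_simplified_3D_HM} with $J_{0,2} = J_0(T)$.

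The step I expect to require the most care is the rigorous justification of the energy identity at the regularity of a weak solution, rather than its formal derivation. The cancellation of the antisymmetric transport term $\ip{(\bu_1\cdot\nabla_h)\bar\theta}{\bar\theta}$ rests on an integration by parts that must be checked given only $\nabla_h\bar\theta \in \LzLphP{\infty}{4}$ and $\bu_1$ \emph{not} bounded; this can be arranged by a horizontal mollification argument, noting that $\nabla_h(\bar\theta^2) = 2\bar\theta\,\nabla_h\bar\theta$ lies in $L^{4/3}_h$ for a.e. $z$ while $\bu_1(z) \in L^q_h$ for every finite $q$ by the elliptic estimate \eqref{elliptic_regularity_Yudovich}. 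Likewise, turning $\ip{\partial_t\bar\theta}{\bar\theta}$ into $\frac{1}{2}\od{}{t}\norm{\bar\theta}_{\LpP{2}}^2$ uses that $\partial_t\bar\theta \in L^\infty([0,T];\LpP{2})$ and $\bar\theta \in C([0,T];\LpP{2})$, so $t \mapsto \norm{\bar\theta(t)}_{\LpP{2}}^2$ is absolutely continuous (Lions--Magenes, cf. \cite{Temam_2001_Th_Num}). Everything else is the bookkeeping of the two differential inequalities above.
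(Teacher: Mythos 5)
Your proposal is correct and follows essentially the same route as the paper: the same splitting $(\bu_1\cdot\nabla_h)\bar\theta + (\bar\bu\cdot\nabla_h)\theta_2$, the same cancellation of the transport and vertical terms for the divergence-free field $(\bu_1,-U_0)$, the same application of Corollary \ref{nonlinearity_estimate_simplified_3D_HM} together with $\norm{\bar\omega}_{\LpP{2}}\leq \frac{1}{2L}\norm{(\bar\theta,\bar\eta)}_{\nLpP{2}}$ and hypothesis \eqref{J_0_2_assumption_simplified_3D_HM}, closed by Gr\"onwall, and even the same appeal to Lemma 1.2 of \cite{Temam_2001_Th_Num} to justify $\left<\partial_t\bar\theta,\bar\theta\right>=\frac{1}{2}\frac{d}{dt}\norm{\bar\theta}^2_{\LpP{2}}$ at weak-solution regularity. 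The only immaterial difference is that you run Gr\"onwall on the sum of the $\LpP{2}$ norms directly, whereas the paper runs it on the sum of squares and converts at the end (whence its prefactor $C$).
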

\begin{proof}
Let us assume that $(\theta_1,\eta_1)$ and $(\theta_2,\eta_2)$ are two weak solutions with initial data, $(\theta_1^0,\eta_1^0)$ and $(\theta_2^0,\eta_2^0)$, respectively, then the differences, $\theta= \theta_1-\theta_2$ and $\eta = \eta_1-\eta_2$ satisfy
\begin{subequations}\label{difference_theta_eta_simplified_3D_HM}
\begin{align}
\pd{\theta}{t} + \nabla_h\cdot (\bu_1\theta_1) - \nabla_h\cdot(\bu_2\theta_2) -U_0\pd{\theta}{z} & = 0,\\
\pd{\eta}{t} + \nabla_h\cdot (\bu_1\eta_1) - \nabla_h\cdot(\bu_2\eta_2) +U_0\pd{\eta}{z} & = 0,
\end{align}
\end{subequations}
in $L^\infty([0,T];\LpP{2})$, where $\bu_1 = \frac{1}{2L}K\convh(\theta_1-\eta_1)$ and $\bu_2 = \frac{1}{2L}K\convh(\theta_2-\eta_2)$.
Recall that $\theta, \eta \in C([0,T];\LpP{2})$ and $\pd{\theta}{t}, \pd{\eta}{t} \in L^\infty([0,T];\LpP{2})$. Thanks to Lemma 1.2 (\cite{Temam_2001_Th_Num}, page 176), the equations below hold
\begin{align}\label{Lions_theta_eta_simplified_3D_HM}
\frac{1}{2}\od{\norm{\theta(t)}^2_{\LpP{2}}}{t} = \left<\pd{\theta}{t}(t),\theta\right> \quad \text{and} \quad \frac{1}{2}\od{\norm{\eta(t)}^2_{\LpP{2}}}{t} = \left<\pd{\eta}{t}(t),\theta\right>.
\end{align}
We can take a.e. in $t$ the $\LpP{2}$ inner product of \eqref{difference_theta_eta_simplified_3D_HM} with $\theta(t)$ and $\eta(t)$, respectively, and get
\begin{align}
\frac{1}{2}\od{\norm{\theta(t)}^2_{\LpP{2}}}{t} &= -\left<(\bu\cdot\nabla_h) \theta_2(t),\theta(t)\right>\notag \\
& \quad \leq C L^{1/2}\norm{\nabla_h\theta_2(t)}_{\LzLphP{\infty}{4}} \norm{\omega(t)}_{\LpP{2}} \norm{\theta(t)}_{\LpP{2}},\label{theta_diff_simplified_3d_HM} \\
\frac{1}{2}\od{\norm{\eta(t)}^2_{\LpP{2}}}{t} &= -\left<((\bu(t))\cdot\nabla_h) \eta_2(t),\eta(t))\right>\notag \\
& \quad \leq C L^{1/2}\norm{\nabla_h\eta_2(t)}_{\LzLphP{\infty}{4}} \norm{\omega(t)}_{\LpP{2}} \norm{\eta(t)}_{\LpP{2}}, \label{eta_diff_simplified_3d_HM}
\end{align}
a.e. $t\in [0,T]$ where $\bu = \bu_1-\bu_2$ and $\omega = \omega_1-\omega_2$. Recall that by the use of \eqref{elliptic_regularity_Yudovich},
\begin{align*}
\norm{\omega(t)}_{\LpP{2}}\leq \frac{1}{L}\norm{\bu(t)}_{\LpP{2}}\leq C\norm{(\theta(t),\eta(t))}_{\nLpP{2}},
\end{align*}
for any $t\in[0,T]$. Therefore, summing the inequalities \eqref{theta_diff_simplified_3d_HM} and \eqref{eta_diff_simplified_3d_HM} and using the assmuption \eqref{J_0_2_assumption_simplified_3D_HM} yield
\begin{align*}
&\od{\left(\norm{\theta(t)}^2_{\LpP{2}}+\norm{\eta(t)}^2_{\LpP{2}}\right)}{t} \\&
 \leq \frac{C}{L^{1/2}}J_{0,2} \norm{(\nabla_h\theta_2^0,\nabla_h\eta_2^0)}_{\nLzLphP{\infty}{4}} \left(\norm{\theta(t)}^2_{\LpP{2}}+\norm{\eta(t)}^2_{\LpP{2}}\right),
\end{align*}
a.e. $t\in[0,T]$. Integrating the inequality with respect to time and using Gronwall's lemma, and the fact that norms are equivalent in finite dimensional spaces, we have,
\begin{align*}
&\norm{(\theta_1(t)-\theta_2(t),\eta_1(t)-\eta_2(t))}_{\nLpP{2}}\\
&\qquad \qquad \leq Ce^{\frac{C}{L^{1/2}}J_{0,2} T\norm{(\nabla_h\theta_2^0,\nabla_h\eta_2^0)}_{\nLzLphP{\infty}{4}}}\norm{(\theta_1^0-\theta_2^0,\eta_1^0-\eta_2^0)}_{\nLpP{2}},
\end{align*}
a.e. $t\in[0,T]$. This proves the corollary.
\end{proof}

\begin{theorem}
\label{global_existence_uniqueness_simplified_HM}
Let $T>0$, let $(\theta^0,\eta^0)\in\nLpP{\infty}$ with $(\nabla_h\theta^0,\nabla_h\eta^0) \in \nLzLphP{\infty}{4}$, $(\partial_z\theta^0,\partial_z\eta^0) \in \nLpP{2}$ be given. Then system \eqref{simplified_3D_HM} has a unique weak solution $(\theta,\eta)$ in the sense of Definition \ref{weak_solution_definition_simplified_3D_HM}, that satisfies estimates \eqref{estimate_a_simplified_3D_HM}--\eqref{estimate_b_simplified_3D_HM}
Furthermore,
\begin{align}
&\sup_{t\in[0,T]}\norm{\nabla_h \bu(t)}_{\LpP{\infty}}\leq \notag \\
& \frac{C}{L}\norm{(\theta^0,\eta^0)}_{\nLpP{\infty}}\log \left(e^2+CL^2J_0(T)\frac{\norm{(\nabla_h\theta^0,\nabla_h\eta^0)}_{\nLzLphP{\infty}{4}}}{\norm{(\theta^0,\eta^0)}_{\nLpP{2}}}\right),
\end{align}
where $\bu = K\convh{{\omega}}$ with ${\omega} = \frac{1}{2}({\theta}-{\eta})$. $C$ is a constant that depends only on the domain $\nT^3$, $J_0(T)$, depends on $T$ and on the domain $\nT^3$ and the norms of the initial data $(\theta^0,\eta^0)$ and are specified in \eqref{J_0_simplified_3D_HM}. Moreover, if $(\nabla_h\theta_1^0,\nabla_h\eta_1^0), (\nabla_h\theta_2^0,\nabla_h\eta_2^0) \in \nLzLphP{\infty}{4}$, $(\partial_z\theta^0_1,\partial_z\eta_1^0), (\partial_z\theta_2^0,\partial_z\eta_2^0) \in \nLpP{2}$, then the corresponding weak solutions $(\theta_1,\eta_1), (\theta_2,\eta_2)$ of system \eqref{simplified_3D_HM} satisfy:
\begin{align}
&\sup_{t\in[0,T]}\norm{(\theta_1(t)-\theta_2(t),\eta_1(t)-\eta_2(t))}_{\nLpP{2}}\\
&\qquad \qquad \leq Ce^{\frac{C}{L^{1/2}}J_{0}(T)T\norm{(\nabla_h\theta_2^0,\nabla_h\eta_2^0)}_{\nLzLphP{\infty}{4}}}\norm{(\theta_1^0-\theta_2^0,\eta_1^0-\eta_2^0)}_{\nLpP{2}}.
\end{align}
\end{theorem}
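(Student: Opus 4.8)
The plan is to obtain $(\theta,\eta)$ as a limit of the smooth solutions produced by Theorem \ref{global_existence_simplified_HM_classical}, using the continuous dependence estimate of Corollary \ref{cont_dep_initial_data_simplified_HM} to control the limiting procedure, and then to read off the estimates \eqref{estimate_a_simplified_3D_HM}--\eqref{estimate_b_simplified_3D_HM} from the resulting uniform bounds. As before, if $(\theta^0,\eta^0)={\bf 0}$ the statement is trivial, so assume $(\theta^0,\eta^0)\neq{\bf 0}$.

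First I would regularize the data. Let $\rho_k$ be a standard nonnegative periodic mollifier on $\nT^3$ with $\int_{\nT^3}\rho_k=1$, and set $(\theta^0_k,\eta^0_k):=(\theta^0*\rho_k,\eta^0*\rho_k)\in C^2_{per}(\nT^3)\times C^2_{per}(\nT^3)$. By Young's inequality together with Minkowski's integral inequality (applied in the anisotropic $L^\infty_z L^4_h$ norm), the regularization does not increase any of the relevant norms; in particular
\begin{align*}
\norm{(\theta^0_k,\eta^0_k)}_{\nLpP{\infty}}\leq\norm{(\theta^0,\eta^0)}_{\nLpP{\infty}},\quad
\norm{(\nabla_h\theta^0_k,\nabla_h\eta^0_k)}_{\nLzLphP{\infty}{4}}\leq\norm{(\nabla_h\theta^0,\nabla_h\eta^0)}_{\nLzLphP{\infty}{4}},
\end{align*}
and similarly for $(\partial_z\theta^0_k,\partial_z\eta^0_k)$ in $\nLpP{2}$. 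Moreover $(\theta^0_k,\eta^0_k)\strong(\theta^0,\eta^0)$ in $\nLpP{p}$ for every $p\in[1,\infty)$, so $\norm{(\theta^0_k,\eta^0_k)}_{\nLpP{2}}\geq\tfrac12\norm{(\theta^0,\eta^0)}_{\nLpP{2}}>0$ for $k$ large. Applying Theorem \ref{global_existence_simplified_HM_classical} to each $(\theta^0_k,\eta^0_k)$ yields weak solutions $(\theta_k,\eta_k)$ satisfying \eqref{estimate_a_simplified_3D_HM}--\eqref{estimate_b_simplified_3D_HM}. Since $C_0,J_0(T),H_0(T)$ depend only on $L$, $T$ and the initial norms listed above, the two-sided control of those norms shows they may be bounded, uniformly in $k$, by the constants built from $(\theta^0,\eta^0)$. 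This is the one delicate point, and it is exactly where $(\theta^0,\eta^0)\neq{\bf 0}$ and the $\nLpP{2}$ convergence enter, keeping the logarithmic ratio in \eqref{C_0_simplified_3D_HM} bounded.

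Next I would pass to the limit. For any indices $k,l$, Corollary \ref{cont_dep_initial_data_simplified_HM} (with $J_{0,2}$ the uniform bound from the previous step) gives
\begin{align*}
\sup_{t\in[0,T]}\norm{(\theta_k-\theta_l,\eta_k-\eta_l)}_{\nLpP{2}}\leq C\,e^{\frac{C}{L^{1/2}}J_{0}(T)T\norm{(\nabla_h\theta^0,\nabla_h\eta^0)}_{\nLzLphP{\infty}{4}}}\norm{(\theta^0_k-\theta^0_l,\eta^0_k-\eta^0_l)}_{\nLpP{2}},
\end{align*}
and since $(\theta^0_k,\eta^0_k)$ is Cauchy in $\nLpP{2}$, so is $(\theta_k,\eta_k)$ in $C([0,T];\nLpP{2})$; call its limit $(\theta,\eta)$. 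The elliptic estimate \eqref{elliptic_regularity_Yudovich}, exactly as in \eqref{4_40_simplified-3d_HM}, gives $\bu_k\strong\bu$ in $C([0,T];\LpP{2})$ with $\bu=\frac{1}{2L}K\convh(\theta-\eta)$. One then passes to the limit in the weak formulation of Definition \ref{weak_solution_definition_simplified_3D_HM} term by term, precisely as in Step 3 of the proof of Theorem \ref{global_existence_simplified_HM_classical}: the linear terms converge by the strong $\nLpP{2}$ convergence, while the bilinear terms are handled by adding and subtracting the intermediate products and using $\bu_k\strong\bu$ together with the uniform $\nLpP{2}$ bound on $\theta_k,\eta_k$. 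This shows $(\theta,\eta)$ solves \eqref{simplified_3D_HM} weakly with $(\theta(0),\eta(0))=(\theta^0,\eta^0)$.

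Finally I would recover the quantitative estimates and uniqueness. The sequence $(\theta_k,\eta_k)$ is uniformly bounded in $L^\infty([0,T];\nLpP{\infty})$, its horizontal gradient in $L^\infty([0,T];\nLzLphP{\infty}{4})$, and $(\partial_z\theta_k,\partial_z\eta_k)$, $(\partial_t\theta_k,\partial_t\eta_k)$ in $L^\infty([0,T];\nLpP{2})$, so by the Banach--Alaoglu theorem (and uniqueness of limits, identifying the weak-$\ast$ limits with $(\theta,\eta)$ and its derivatives) the limit inherits the bounds \eqref{estimate_a_simplified_3D_HM}--\eqref{estimate_b_simplified_3D_HM} and the $\nabla_h\bu$ bound, by weak-$\ast$ lower semicontinuity of the norms. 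The equality in \eqref{estimate_a_simplified_3D_HM} for $p<\infty$ follows from the strong $\nLpP{2}$ convergence interpolated against the uniform $\nLpP{\infty}$ bound (passing to the limit in the conservation identity valid for each $(\theta_k,\eta_k)$), and then the case $p=\infty$ by letting $p\to\infty$. The continuous dependence inequality for two weak solutions is a direct application of Corollary \ref{cont_dep_initial_data_simplified_HM} with $J_{0,2}=J_0(T)$ for $(\theta_2,\eta_2)$, which satisfies the a priori bound \eqref{J_0_2_assumption_simplified_3D_HM} by the estimate just established; uniqueness is the special case with equal initial data, which forces $(\theta_1,\eta_1)=(\theta_2,\eta_2)$. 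I expect the main obstacle to be the uniformity of the constants across the regularizing sequence, since Corollary \ref{cont_dep_initial_data_simplified_HM} only yields a Cauchy estimate once $J_{0,2}$ is bounded independently of $k$; everything else is an assembly of the machinery already in place.
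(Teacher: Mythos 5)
Your proposal follows essentially the same route as the paper's own proof: mollify the initial data, invoke Theorem \ref{global_existence_simplified_HM_classical} for the regularized problems, use Corollary \ref{cont_dep_initial_data_simplified_HM} to show the approximating solutions are Cauchy in $C([0,T];\nLpP{2})$, pass to the limit in the weak formulation as in Step 3 of that theorem's proof, and recover the uniform bounds via Banach--Alaoglu, with uniqueness and continuous dependence again from Corollary \ref{cont_dep_initial_data_simplified_HM}. If anything, you are slightly more careful than the paper at the one delicate point you flag---the lower bound $\norm{(\theta^0_k,\eta^0_k)}_{\nLpP{2}}\geq\tfrac12\norm{(\theta^0,\eta^0)}_{\nLpP{2}}$ needed to keep the logarithmic ratio in \eqref{C_0_simplified_3D_HM} (and hence $J_0(T)$, $H_0(T)$) uniform over the regularization, which the paper uses implicitly without comment.
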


\begin{proof}
The proof is based again on the idea of construction of a sequence of approximate weak solutions $\{(\theta^\veps, \eta^\veps)\}_{\veps\in(0,L/4]}$ of \eqref{simplified_3D_HM}  and then passing to the limit to a weak solution of \eqref{simplified_3D_HM}. Without loss of generality, we will assume that $(\theta^0,\eta^0) \not= {\bf 0}$.

{\bf{Step 1}}: Construction of a sequence of approximate solutions $\{(\theta^\veps,\eta^\veps)\}_{\veps\in(0,\frac{L}{4}]}$.\\
Let $\rho(x,y,z) = \zeta(x,y)\xi(z)$ be a $C^\infty$-compactly supported mollifier, such that $\zeta(x,y)\geq0$ and $\xi(z) \geq0$ are $C^\infty$-compactly supported mollifiers, with $\int_0^L\xi(z)\, dz= 1$ and $\int_0^L\int_0^L\zeta(x,y)\, dxdy =1$. Let $\rho^\veps(x,y,x) = \zeta^\veps(x,y)\xi^\veps(z)$, where $\zeta^\veps(x,y) = \frac{1}{\veps^2}\zeta(\frac{x}{\veps},\frac{y}{\veps})$ and $\xi^\veps(z) = \frac{1}{\veps}\xi(\frac{z}{\veps})$.
By Theorem 6 (\cite{Evans_1998}, page 630), $(\theta^0(\bx)\ast \rho^\veps, \eta^0(\bx)\ast \rho^\veps)\in C^\infty_{per}(\nT^3)\times C^\infty_{per}(\nT^3) \, \forall \veps\in(0,\frac{L}{4}]$. By Young's convolution inequality:
\begin{align*}
\norm{\theta^0\ast {\rho^\veps}}_{\LpP{2}}&\leq \norm{\theta^0}_{\LpP{2}}, \qquad \quad \norm{\eta^0\ast {\rho^\veps}}_{\LpP{2}} \leq \norm{\eta^0}_{\LpP{2}}, \\\norm{\theta^0\ast {\rho^\veps}}_{\LpP{\infty}}& \leq \norm{\theta^0}_{\LpP{\infty}}, \qquad\;\; \norm{\eta^0\ast {\rho^\veps}}_{\LpP{\infty}}  \leq \norm{\eta^0}_{\LpP{\infty}},
\end{align*}
\begin{align*}
\norm{\partial_z\theta^0\ast {\rho^\veps}}_{\LpP{2}} &\leq \norm{\partial_z\theta^0}_{\LpP{2}}, \qquad \norm{\partial_z\eta^0\ast {\rho^\veps}}_{\LpP{\infty}} \leq \norm{\partial_z\eta^0}_{\LpP{\infty}}, \\
\norm{\nabla_h\theta^0\ast{\rho^\veps}}_{\LzLphP{\infty}{4}} &= {\norm{\norm{\left(\nabla_h\theta^0\ast{\zeta^\veps}\right)(z)}_{\LphP{4}}\ast{\xi^\veps}}}_{\LzP{\infty}}\leq \norm{\nabla_h\theta^0}_{\LzLphP{\infty}{4}}, \\
\norm{\nabla_h\eta^0\ast{\rho^\veps}}_{\LzLphP{\infty}{4}} &= {\norm{\norm{\left(\nabla_h\eta^0\ast{\zeta^\veps}\right)(z)}_{\LphP{4}}\ast{\xi^\veps}}}_{\LzP{\infty}}\leq \norm{\nabla_h\eta^0}_{\LzLphP{\infty}{4}}.
\end{align*}
By Theorem \ref{global_existence_simplified_HM_classical}, and the above estimates, there exists a weak solution $(\theta^\veps,\eta^\veps)$ of the system
\begin{subequations}\label{eps_simplified_3D_HM}
\begin{align}
\pd{\theta^\veps}{t} + (\bu^\veps\cdot\nabla_h)\theta^\veps - U_0\pd{\theta^\veps}{z} =0, \qquad \pd{\eta^\veps}{t} + (\bu^\veps\cdot\nabla_h)\eta^\veps + U_0\pd{\eta^\veps}{z} =0, \\
\bu^\veps = K\convh \omega^\veps, \, \omega^\veps  = \frac{1}{2L}(\theta^\veps-\eta^\veps),\\
\theta^\veps(0;\bx) = \theta^0(\bx)\convh \rho^\veps =: \theta^{0,\veps},\, \eta^\veps(0;\bx) = \eta^0(\bx)\convh \rho^\veps =: \eta^{0,\veps},
\end{align}
\end{subequations}
which enjoy the following estimates,
\begin{subequations}
\label{uniform_bounds_simplified_HM_eps}
\begin{align}
\sup_{t\in[0,T]}\norm{({ \theta ^\veps}(t), {\eta^\veps}(t))}_{\nLpP{p}} = \norm{(\theta^0, \eta^0)}_{\nLpP{p}},
\end{align}
for any $p\in[1,\infty]$,
\begin{align}
\sup_{t\in[0,T]}\norm{(\nabla_h{\theta^\veps}(t),\nabla_h{\eta^\veps}(t))}_{\nLzLphP{\infty}{4}} \leq {J_0(T)}\norm{(\nabla_h\theta^0,\nabla_h\eta^0)}_{\nLzLphP{\infty}{4}},
\end{align}
\begin{align}
\sup_{t\in[0,T]}\norm{(\partial_z{ \theta^\veps}(t), \partial_z{\eta^\veps}(t))}_{\nLpP{2}} \leq {H_0(T)}\norm{(\partial_z\theta^0,\partial_z\eta^0)}_{\nLpP{2}},
\end{align}
\begin{align}
&\sup_{t\in[0,T]}\norm{(\partial_t{ \theta^\veps}(t),\partial_t{ \eta^\veps}(t))}_{\nLpP{2}} \leq H_0(T)\norm{(\partial_z\theta^0,\partial_z\eta^0)}_{\nLpP{2}} \notag \\
&\qquad \qquad +  CLJ_0(T)\norm{(\theta^0,\eta^0)}_{\nLpP{\infty}}\norm{(\nabla_h\theta^0,\nabla_h\eta^0)}_{\nLzLphP{\infty}{4}},
\end{align}
where $J_0(T), H_0(T)$ are constants specified in \eqref{J_0_simplified_3D_HM} and \eqref{H_0_simplified_3D_HM}, respectively.
\end{subequations}

{\bf{Step 2}}: The convergence of the sequence $\{(\theta^\veps,\eta^\veps)\}_{\veps\in(0,\frac{L}{4}]}$, and passing in the limit to a weak solution $(\theta,\eta)$ as $\veps\rightarrow0$.
\begin{claim}
The sequence $\{(\theta^\veps,\eta^\veps)\}_{\veps\in(0,\frac{L}{4}]}$ is Cauchy in $C([0,T];\nLpP{2})$.
\end{claim}
{\it{Proof of Claim 4.4}}: Let $\frac{L}{4}>\veps_2>\veps_1>0$, and let $(\theta^{\veps_2},\eta^{\veps_2}), (\theta^{\veps_1},\eta^{\veps_1})$ be the corresponding weak solutions of system \eqref{eps_simplified_3D_HM}. By Corollary \ref{cont_dep_initial_data_simplified_HM} we have:
\begin{align*}
&\sup_{t\in[0,T]}\norm{(\theta^{\veps_1}(t)-\theta^{\veps_2}(t),\eta^{\veps_1}(t)-\eta^{\veps_2}(t))}_{\nLpP{2}}\leq \notag \\
&\qquad Ce^{\frac{C}{L^{1/2}}K_0(T) T\norm{(\nabla_h\theta^{0,\veps_2},\nabla_h\eta^{0,\veps_2})}_{\nLzLphP{\infty}{4}}}\norm{(\theta^{0,\veps_1}-\theta^{0,\veps_2},\eta^{0,\veps_1}-\eta^{0,\veps_2})}_{\nLpP{2}},
\end{align*}
where $J_0(T)$ is specified in \eqref{J_0_simplified_3D_HM}. Since $(\theta^0,\eta^0) \in \nLpP{2}$, then by Theorem 6 (\cite{Evans_1998}, page 630):
\begin{align*}
(\theta^{0,\veps}, \eta^{0,\veps}) \strong (\theta^0,\eta^0)  \qquad \quad \text {in}\,\, \nLpP{2},
\end{align*}
as $\veps \strong 0$. Thus, as $\veps_1, \veps_2 \strong 0$ , one can see that
\begin{align}
\sup_{t\in[0,T]}\norm{(\theta^{\veps_1}(t)-\theta^{\veps_2}(t), \eta^{\veps_1}(t)-\eta^{\veps_2}(t))}_{\nLpP{2}} \strong 0,
\end{align}
which implies that the sequence $\{(\theta^\veps,\eta^\veps)\}_{\veps\in(0,\frac{L}{4}]}$ is Cauchy in $C([0,T];\nLpP{2})$. Thus there exists $(\theta,\eta) \in C([0,T];\nLpP{2})$ such that:
\begin{align}
(\theta^\veps, \eta^\veps) \strong (\theta,\eta) \qquad \quad \text{in} \,\, C([0,T];\nLpP{2}),
\end{align}
as $\veps\strong 0$. Define $\omega = \frac{1}{2L}(\theta-\eta)$, $\bu = K\convh \omega$, then:

\begin{align}
\bu^\veps \strong \bu \qquad \quad \text{in} \,\, C([0,T];\LpP{2}),
\end{align}
as $\veps\strong 0$. Following the argument in Step 3 of the proof of Theorem \ref{global_existence_simplified_HM_classical}, one can show that $(\theta,\eta)$ satisfies \eqref{simplified_3D_HM} in the weak formulation. The uniform bounds \eqref{uniform_bounds_simplified_HM_eps} allow us to use the Banach Alaoglu compactness theorem and extract a weak-$\ast$ convergent subsequence, as in the proof of Theorem \ref{global_existence_simplified_HM_classical}, that inherits all the uniform bounds \eqref{uniform_bounds_simplified_HM_eps}. And so, $(\theta,\eta)$ is a weak solution of \eqref{simplified_3D_HM} and by the uniqueness of the limit, the weak-$\ast$ will be again $(\theta,\eta)$. Moreover, by Proposition \ref{nabla_h_u_Linfty_estimate_simplified_3D_HM} and by the argument introduced in Theorem \ref{global_existence_simplified_HM_classical},
\begin{align*}
&\sup_{t\in[0,T]}\norm{\nabla_h \bu(t)}_{\LpP{\infty}}\leq\\
&\frac{C}{L}\norm{(\theta^0,\eta^0)}_{\nLpP{\infty}}\log \left(4+CL^2J_0(T)\frac{\norm{(\nabla_h\theta^0,\nabla_h\eta^0)}_{\nLzLphP{\infty}{4}}}{\norm{(\theta^0,\eta^0)}_{\nLpP{2}}}\right).
\end{align*}
{\bf{Step 3}}:  Continuous dependence on the initial data and uniqueness of the solution.\\
By Corollary \ref{cont_dep_initial_data_simplified_HM},
\begin{align}
&\sup_{t\in[0,T]}\norm{(\theta_1(t)-\theta_2(t),\eta_1(t)-\eta_2(t))}_{\nLpP{2}}\notag \\
& \qquad\qquad \leq Ce^{\frac{C}{L^{1/2}}J_0(T)T \norm{(\nabla_h\theta_2^0,\nabla_h\eta_2^0)}_{\nLzLphP{\infty}{4}}}\norm{(\theta_1^0-\theta_2^0,\eta_1^0-\eta_2^0)}_{\nLpP{2}}.
\end{align}
Uniqueness is an immediate consequence of the above inequality. This completes the proof.
\end{proof}

Now, we consider the system:
\begin{subequations}\label{simplified_3D_HM_w_omega}
\begin{align}
\frac{\partial w}{\partial t} + ({\bold{u}} \cdot \nabla_h) w-U_0L\frac{\partial \omega}{\partial z} = 0, \qquad \frac{\partial \omega}{\partial t} +({\bold{u}}\cdot \nabla_h)\omega -\frac{U_0}{L}\frac{\partial w}{\partial z} = 0,\\
w = \frac{1}{2}(\theta+\eta), \qquad \omega = \frac{1}{2L}(\theta-\eta), \\
\nabla_h\times \bu = \omega , \qquad \nabla_h \cdot \bold{u} = 0.
\end{align}
\end{subequations}
Notice that for any space norm $\norm{.}$, we have:
\begin{align}\label{w_omega_theta_eta_estimate}
\norm{(w,L\omega)} \leq \norm{(\theta,\eta)} \leq 2\norm{(w,L\omega)}.
\end{align}
The next Theorem follows immediately from Theorem \ref{global_existence_uniqueness_simplified_HM} and \eqref{w_omega_theta_eta_estimate},
\begin{theorem}
\label{global_existence_uniqueness_HM_like}
Let $(w^0,\omega^0) \in \nLpP{\infty}$, $(\nabla_hw^0,\nabla_h\omega^0) \in \nLzLphP{\infty}{4}$, $(\partial_zw^0,\partial_z\omega^0) \in \nLpP{2}$, then the system \eqref{simplified_3D_HM_w_omega} has a unique weak solution $(w,\omega)$ in a sense similar to Definition \ref{weak_solution_definition_simplified_3D_HM}, such that:
\begin{align}
\sup_{t\in[0,T]}\norm{({w}(t), { L\omega}(t))}_{\nLpP{\infty}} \leq 2 \norm{(w^0, L\omega^0)}_{\nLpP{\infty}} ,
\end{align}
\begin{align}
\sup_{t\in[0,T]}\norm{(\nabla_h{w}(t),L\nabla_h{\omega}(t))}_{\nLzLphP{\infty}{4}} \leq 2{J_0(T)}\norm{(\nabla_hw^0,L\nabla_h\omega^0)}_{\nLzLphP{\infty}{4}},
\end{align}
\begin{align}
\sup_{t\in[0,T]}\norm{(\partial_z{ w}(t), L\partial_z{\omega}(t))}_{\nLpP{2}} \leq 2{H_0(T)}\norm{(\partial_zw^0,L\partial_z\omega^0)}_{\nLpP{2}},
\end{align}
\begin{align}
&\sup_{t\in[0,T]}\norm{(\partial_t{ w}(t),L\partial_t{ \omega}(t))}_{\nLpP{2}} \leq 2H_0(T)\norm{(\partial_zw^0,L\partial_z\omega^0)}_{\nLpP{2}} \notag \\
&\qquad \qquad +  4CL{J_0}(T)\norm{(w^0,L\omega^0)}_{\nLpP{\infty}}\norm{(\nabla_hw^0,L\nabla_h\omega^0)}_{\nLzLphP{\infty}{4}}.
\end{align}
Furthermore,
\begin{align}
&\sup_{t\in[0,T]}\norm{\nabla_h \bu(t)}_{\LpP{\infty}}\leq \notag \\
& 2\frac{C}{L}\norm{(w^0,	L\omega^0)}_{\nLpP{\infty}}\log \left(e^2+2CL^2J_0(T)\frac{\norm{(\nabla_hw^0,L\nabla_h\omega^0)}_{\nLzLphP{\infty}{4}}}{\norm{(w^0,L\omega^0)}_{\nLpP{2}}}\right),
\end{align}
where $\bu = K\convh{{\omega}}$. $C_0$ is a constant that depends on the domain $\nT^3$ and the norms of the initial data $(w^0,\omega^0)$ specified in \eqref{C_0_simplified_3D_HM}. $C$ is a positive dimensionless constant , $J_0(T), H_0(T)$ depends on $T$ and the on domain $\nT^3$ and the norms of the initial data $(w^0,\omega^0)$ specified in \eqref{J_0_simplified_3D_HM} and  \eqref{H_0_simplified_3D_HM}, respectively.
Moreover, if $(w_0^1,\omega_0^1), (w_0^2,\omega_0^2) \in \nLpP{\infty}$, $(\nabla_hw_1^0,\nabla_h\omega_1^0)$, $(\nabla_hw_2^0,\nabla_h\omega_2^0) \in \nLzLphP{\infty}{4}$, $(\partial_zw^0_1,\partial_z\omega_1^0)$, $(\partial_zw_2^0,\partial_z\omega_2^0) \in \nLpP{2}$, then the corresponding weak solutions $(w_1,\omega_1)$, $(w_2,\omega_2)$ of system \eqref{simplified_3D_HM} satisfy:
\begin{align}
&\sup_{t\in[0,T]}\norm{(w_1(t)-w_2(t),L(\omega_1(t)-\omega_2(t)))}_{\nLpP{2}}\leq\notag \\
&\qquad 2Ce^{2\frac{C}{L^{1/2}}J_{0}(T)T \norm{(\nabla_hw_2^0,L\nabla_h\omega_2^0)}_{\nLzLphP{\infty}{4}}}\norm{(w_1^0-w_2^0,L(\omega_1^0-\omega_2^0))}_{\nLpP{2}}.
\end{align}
\end{theorem}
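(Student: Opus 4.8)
The plan is to exploit the fact that system \eqref{simplified_3D_HM_w_omega} is obtained from system \eqref{simplified_3D_HM} by the invertible linear change of unknowns $\theta = w + L\omega$, $\eta = w - L\omega$, whose inverse is precisely $w = \frac{1}{2}(\theta+\eta)$ and $\omega = \frac{1}{2L}(\theta-\eta)$. First I would check at the level of the equations that this substitution carries one system into the other: adding the two evolution equations in \eqref{simplified_3D_HM} reproduces the equation for $w$, subtracting them (and dividing by $L$) reproduces the equation for $\omega$, while the constraints $\nabla_h\times\bu = \omega = \frac{1}{2L}(\theta-\eta)$ and $\nabla_h\cdot\bu = 0$ are common to both formulations. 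Since the transformation is linear with constant coefficients, it commutes with the distributional formulation of Definition \ref{weak_solution_definition_simplified_3D_HM}, so $(w,\omega)$ is a weak solution of \eqref{simplified_3D_HM_w_omega} if and only if the associated $(\theta,\eta)$ is a weak solution of \eqref{simplified_3D_HM}.

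Next I would transfer the regularity of the data. Given $(w^0,\omega^0)\in\nLpP{\infty}$ with $(\nabla_hw^0,\nabla_h\omega^0)\in\nLzLphP{\infty}{4}$ and $(\partial_zw^0,\partial_z\omega^0)\in\nLpP{2}$, I set $\theta^0 = w^0 + L\omega^0$, $\eta^0 = w^0 - L\omega^0$; the two-sided estimate \eqref{w_omega_theta_eta_estimate}, applied to the $\nLpP{\infty}$ norm of the functions, the $\nLzLphP{\infty}{4}$ norm of their horizontal gradients, and the $\nLpP{2}$ norm of their $z$-derivatives, shows that $(\theta^0,\eta^0)$ satisfies the hypotheses of Theorem \ref{global_existence_uniqueness_simplified_HM}. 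Applying that theorem produces a unique weak solution $(\theta,\eta)$ of \eqref{simplified_3D_HM} together with all the estimates \eqref{estimate_a_simplified_3D_HM}--\eqref{estimate_b_simplified_3D_HM}, the logarithmic bound on $\norm{\nabla_h\bu}_{\LpP{\infty}}$, and the continuous-dependence inequality of Corollary \ref{cont_dep_initial_data_simplified_HM}.

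Finally I would translate each estimate back to the $(w,\omega)$ variables. For any of the spatial norms in the statement, \eqref{w_omega_theta_eta_estimate} gives $\norm{(w,L\omega)} \leq \norm{(\theta,\eta)} \leq 2\norm{(w,L\omega)}$, and similarly for gradients and for time and vertical derivatives; feeding the lower bound on the left-hand side and the upper bound on the right-hand side of each estimate from Theorem \ref{global_existence_uniqueness_simplified_HM} yields exactly the stated inequalities, with the extra factors of $2$ (and $4$) coming from the two-sided constant in \eqref{w_omega_theta_eta_estimate}. The bound on $\norm{\nabla_h\bu}_{\LpP{\infty}}$ is handled the same way, noting that $\bu = K\convh\omega$ is literally the same velocity field in both formulations, and the continuous-dependence and uniqueness statement follows by applying the equivalence to the difference of two solutions and invoking Corollary \ref{cont_dep_initial_data_simplified_HM}.

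There is essentially no genuine obstacle here: the content of the theorem is inherited wholesale from Theorem \ref{global_existence_uniqueness_simplified_HM} through a bijective linear change of unknowns. The only points requiring a modicum of care are the bookkeeping of the multiplicative constants introduced by \eqref{w_omega_theta_eta_estimate}, and the routine verification that the substitution respects both the weak formulation and the attainment of the initial data in $C([0,T];\nLpP{2})$ --- both of which are immediate because the transformation has constant coefficients and is invertible with an explicit inverse involving only the fixed scale $L$.
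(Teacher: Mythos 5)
Your proposal is correct and is exactly the paper's argument: the paper itself states that the theorem ``follows immediately from Theorem \ref{global_existence_uniqueness_simplified_HM} and \eqref{w_omega_theta_eta_estimate},'' i.e., from the invertible change of unknowns $\theta = w+L\omega$, $\eta = w-L\omega$ together with the two-sided norm equivalence, which is precisely your route. Your write-up in fact supplies slightly more detail (the verification that the constant-coefficient substitution respects the weak formulation and the bookkeeping of the factors of $2$) than the paper records.
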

\bigskip

\section*{Acknowledgements}
A.F. would like to thank the Faculty of Mathematics and Computer Science
at the Weizmann Institute of Science for its
kind hospitality where part this work was completed.
This work was supported in part by the NSF grants DMS-1009950,
DMS-1109640, DMS-1109645 and DMS-1109022. E.S.T.  also
acknowledges the support of the Alexander von Humboldt
Stiftung/Foundation and the Minerva Stiftung/Foundation.

\bigskip




\begin{thebibliography}{10}

\bibitem{Akerstedt_Nycander_Pavlenko_1996}
H. O. Akerstedt, J. Nycander and V. P. Pavlenko, {\it Three-dimensional stability of drift vortices}, Phys. Plasmas {\bf3} (1996), no. 1, 160--167. MR 1367075
(96h:76087)

\bibitem{Brezis_Gallouet_1980}
H. Br{\'e}zis and  T. Gallouet, {\it Nonlinear {S}chr\"odinger evolution equations}, Nonlinear Anal. {\bf 4} (1980), no. 4, 677--681. M.R. 582536 (81i:35139)

\bibitem{Brezis_Wainger_1980}
--- and S. Wainger, {\it A Note on limiting cases of Sobolev embeddings and convolution inequalities}, Comm. Partial Differential Equations {\bf 5} (1980), no. 7, 773--789.
MR 579997 (81k:46028)

\bibitem{Cao_Wu_2011}
C. Cao and J. Wu, {\it {G}lobal regularity for the 2{D} {MHD} equations with mixed
partial dissipation and magnetic diffusion}, {A}dv. Math. {\bf 226} (2011), no. 2, 1803--1822. MR 2737801.

\bibitem{Constantin_Foias_1988}
P. Constantin and C. Foias, {\it Navier-Stokes Equations}, Chicago Lectures in Mathematics. University of Chicago Press, Chicago, IL,1988. MR 972259 (90b:35190)

\bibitem{Danchin_Paicu_2011}
R. Danchin and M. Paicu, {\it {G}lobal existence results for the anisotropic {B}oussinesq system
in dimension two}, Math. Models Methods Appl. Sci. {\bf 22} (2011), no. 3, 421--457. MR 2782720

\bibitem{Evans_1998}
L. C. Evans, {\it Partial Differential Equations}, Graduate Studies in Mathematics, vol. 19, AMS, Providence, RT, 1998. MR 1625845(99e:35001)

\bibitem{Farhat_Hauk_Titi_2011}
A. Farhat, S. Hauk and E. S. Titi, {\it Analytical study of the Stommel-Charney model of the gulf stream and its relation to the two-dimensional Hasegawa-Mima equation}, Preprint.

\bibitem{Foias_Manley_Temam_1987}
C. Foias, O. Manley and R. Temam, {\it {A}ttractors for the {B}\'enard problem: existence and physical bounds
on their fractal dimension}, Nonlinear Anal. {\bf 11} (1987), no. 8, 939--967. MR 903787 (89f:35166)

\bibitem{Gao_Zhu_2005}
H. Gao and A. Zhu, {\it {T}he global strong solutions of Hasegawa-Mima-
{C}harney-{O}bukhov equation}, J. Math. Phys. {\bf 46} (2005) no. 8, 083517, 6. MR 2165863 (2006d:35196)

\bibitem{Grauer_1998}
R. Grauer, {\it An energy estimate for a perturbed Hasegawa-Mima equation}, Nonlinearity {\bf11} (1998), no. 3, 659--666. MR 1617978

\bibitem{Guo_Han_2004}
B. Guo AND Y. Han, {\it Existence and uniqueness of global solution of the Hasegawa-Mima equation}, J. Math. Phys. {\bf45} (2004), no. {\bf4}, 1639--1647. MR 2043848

\bibitem{Guo_Huang_2009}
B. Guo, D. Huang, {\it Existence and stability of steady waves for the Hasegawa-Mima Equation},  Bound. Value Probl. (2009), Art. ID 509801, 18. MR 2496641 (2010e:35234)

\bibitem{Hasegawa_Mima_1977}
A.~Hasegawa and K. Mima, {\it Stationary spectrum of strong turbulence in magnetized nonuniform plasma}, Phys. Rev. Lett. {\bf 39} (1977), no. 4, 206--208.

\bibitem{Hasegawa_Mima_1978}
A. Hasegawa and K. Mima, {\it Pseudo-three-dimensional turbulence in magnetized nonuniform plasma}, Phys. Fluids, {\bf 21} (1978), no. 1, 87--92. MR 0455878

\bibitem{Hasegawa_Mima_1977_8}
A.~Hasegawa and K. Mima, {\it {N}onlinear instability of electromagnetic drift waves}, Phys. Fluids  {\bf 21} (1978), no. 1, 81--86.

\bibitem{Hasegawa_Wakatani_1983}
A. Hasegawa, M. Wakatani, {\it Plasma edge turbulence}, Phys. Rev. Lett. {\bf 50} (1983), no. 9, 682-686.

\bibitem{Horton_Meiss_1983}
W. Horton, J. D. Meiss, {\it Solitary drift waves in the presence of magnetic shear}, Phys. Fluids {\bf 26} (1983), no. 4, 990--997.

\bibitem{Hou_Li_2005}
T.Y. Hou, and C. Li, {\it {G}lobal well-posedness of the viscous {B}oussinesq equations}, Discrete Contin. Dyn. Syst. {\bf 12} (2005), No. 1, 1--12. MR. 2121245 (2005j:76026)

\bibitem{John_1986}
F. John, {\it Partial Differential Equations}, Applied Mathematical Sciences, vol. 19, Springer-Verlag, New York, 1986. (Check this)

\bibitem{Julien_Knobloch_Milliff_Werne_2006}
K. Julien, E. Knobloch, R. Milliff and J. Werne, {\it Generalized quasiÐgeostrophy for spatially anisotropic rotationally constrained flows}, J. Fluid Mech. {\bf 555} (2006), 233--274. MR 2265328 (2007m:76149)

\bibitem{Kupferman_Mangoubi_Titi_2008}
R. Kupferman, C. Mangoubi and E. S. Titi, {\it A Beale-Kato-Majda breakdown criterion for an Oldroyd-B fluid in the creeping flow regime}, Commun. Math. Sci. {\bf 6} (2008), No. 1, 235Ð256.

\bibitem{Majda_Bertozzi_2002}
A. J. Majda, A. L. Bertozzi, {\it Vorticity and Incompressible Flow}, Cambridge Texts in Applied Mathematics, vol. 27, Cambridge University Press, Cambridge, 2002. MR 1867882 (2003a:76002)

\bibitem{Marchioro_Pulvirenti_1994}
C. Marchioro, M. Pulvirenti, {\it Mathematical Theory of Incompressible Nonviscous Fluids}, Applied Mathematical Sciences, vol. 96, Springer-Verlag, New York, 1994.
MR 1245492 (94k:76001)

\bibitem{Nicolaenko-Scheurer-Temam_1985}
B. Nicolaenko, B. Scheurer, and R. Temam, {\it Some global dynamical properties of the
Kuramoto--Sivashinsky equation: Nonlinear stability and attractors}, Physica D {\bf16}, (1985) 155--183. MR 796268 (86k:35062)

\bibitem{Paumond_2004}
L.~Paumond, {\it Some remarks on a {H}asegawa-{M}ima-{Ch}arney-{O}bukhov equa-
tion}, Phys. D {\bf 195} (2004), no. 3-4, 379--390. MR 2089520 (2005e:35189)

\bibitem{Pedlosky_1984} J. Pedlosky, {\it The equations for geostrophic
motion in the ocean}, J. Phys.  Oceanogr.
{\bf 14} (1984), 448--455.

\bibitem{Pedlosky_1987} --,  {\it Geophysical Fluid Dynamics},
Springer-Verlag, New York, 1987.

\bibitem{Sprague_Julien_Knobloch_Weme_2006}
M. Sprague, K. Julien, E. Knobloch, and J. Werne, {\it Numerical simulation of an asymptotically reduced system for rotationally constrained convection}, J. Fluid Mech. {\bf 551} (2006), 141--174. MR 2263695 (2007G:76093)

\bibitem{Sueyoshi_Iwayama_2007}
M. Sueyoshi, T. Iwayama, {\it Hamiltonian structure for the Charney-Hasegawa-Mima equation in the asymptotic model regime}, Fluid Dynam. Res. {\bf39} (2007), no. 4 346--352. MR 2317214 (2008a:37099)

\bibitem{Tassi_Chandre_Morrison_2009}
E. Tassi, C. Chandre and P.J. Morrison, {\it Hamiltonian derivation of the {C}harney-{H}asegawa-{M}ima equation}, Phys. Plasmas {\bf16} (2009) , 082301

\bibitem{Temam_1997}
R. Temam, {\it Infinite-Dimensional Dynamical Systems in Mechanics and Physics, 2nd edition}, Applied Mathematical Sciences, vol. 68, Springer-Verlag, New York, 1997. MR 1441312 (98b:58056)

\bibitem{Temam_2001_Th_Num}
--, {\it Navier-Stokes Equations: Theory and Numerical Analysis}, AMS Chelsea Publishing, Providence, RI, 2001, Reprint of the 1984 edition. MR 1846644 (2002j:76001)

\bibitem{Yudovich_1963}
V. I. Yudovich, {\it Non-stationary flow of an ideal incompressible liquid}, Zh. Vych. Mat. {\bf 6} (1963), 1407--1456 (English).

\bibitem{Zhang_2008}
P. Zhang, {\it {G}lobal smooth solutions to the 2-{D} nonhomogeneous {N}avier-{S}tokes equations}, Int. Math. Res. Not. IMRN (2008), Art. ID rnn 098, 26. MR.2439549 (2009k:35244).

\bibitem{R_Zhang_2008}
R. Zhang, {\it The global attractors for the dissipative generalized Hasegawa-Mima equation}, Acta Math. Appl.  Sin. Engl. Ser. {\bf 24} (2008), no. 1, 19--28. MR 2385006 (2009b:35380)

\bibitem{Guo_Zhang_2006}
R. Zhang, B. Guo, {\it Global attractor for the Hasegawa-Mima equation}, Appl. Math. Mech. {\bf 27} (2006), no. 5, 505--511. MR 2233583 (2007B:37202)

\bibitem{Guo_Zhang_2007}
--,  {\it {D}ynamical behavior for the three dimensional generalized {H}asegawa-{M}ima equations}, J. Math. Phys. {\bf 48} (2007), no. 1, 012703, 11. MR 2292610 (2007k:37115)

\end{thebibliography}
\end{document}